\pgfplotsset{compat=1.9}
\renewcommand*\backref[1]{\ifx#1\relax \else (Cited on #1) \fi}
\setlist[itemize]{label={\footnotesize\textcolor{gray}{\textbullet}},leftmargin=15pt}
\theoremstyle{definition}
\newtheorem{definition}{Definition}[section]
\newtheorem{theorem}[definition]{Theorem}
\newtheorem{proposition}[definition]{Proposition}
\newtheorem{lemma}[definition]{Lemma}
\theoremstyle{definition}
\newtheorem{remark}[definition]{Remark}
\numberwithin{equation}{section}
\definecolor{shadecolor}{gray}{0.975} % 0.975
\long\def\DETAILED#1\endDETAILED{\begin{shaded} #1\end{shaded}} % the default version is the long one
\long\def\DETAILED#1\endDETAILED{} %  Cancel % at the beginning of this line to switch on the short 
\DeclarePairedDelimiter\floor{\lfloor}{\rfloor}
\def \rdep {{\overset{\varodot}{r}}}
\def \disp {\displaystyle}
\def \eps {{\varepsilon}}   \def \phi {{\varphi}}
\newcommand{\un}{\mathds{1}}
\def \N {{\mathbb N}}    \def \R {{\mathbb R}}   \def \C {\mathcal{C}}
\def \D {\mathcal{D}}
\newcommand*{\Gcal}{\mathcal{G}}
\def \X {{\mathbb X}} 
\def \M {\mathcal{M}}
\def \rgrande {\mathring{r}}
\newcommand*{\ngrande}{{n}}   \newcommand*{\npiccola}{{m}}
\newcommand*{\grande}{{\mathring{x}}}   \newcommand*{\piccola}{{\dot{x}}}
\newcommand*{\grandey}{{\mathring{y}}}   \newcommand*{\piccolay}{{\dot{y}}}
\newcommand*{\grandeX}{{\mathring{X}}}   \newcommand*{\piccolaX}{{\dot{X}}}
\newcommand*{\bx}{{\bf x}}    \newcommand*{\by}{{\bf y}} 
\newcommand*{\bgrande}{{\mathring{\bx}}}   \newcommand*{\bpiccola}{{\dot{\bx}}} 
\newcommand*{\WGrande}{{\mathring{W}}}
\newcommand*{\WPiccola}{{\dot{W}}}
\newcommand*{\sigmaP}{\dot{\sigma}}
\newcommand*{\Igrande}{{\mathring{\mathcal{J}}}}
\newcommand*{\Ipiccola}{{\dot{\mathcal{J}}}}
\newcommand*{\zpiccola}{{\dot{z}}}
\newcommand*{\zgrande}{{\mathring{z}}}
\newcommand*{\psiG}{{\mathring{\psi}}} \newcommand*{\psiGyR}{{\psiG^\by_R}}
\newcommand*{\psiP}{{\dot{\psi}}} \newcommand*{\psiPyR}{{\psiP^\by_R}}
\newcommand*{\BallGyR}{{\mathring{B}^\by_R}} \newcommand*{\BallPyR}{{\dot{B}^\by_R}}
\newcommand*{\lambdaGyR}{{\mathring{\lambda}}^\by_R}
\newcommand*{\lambdaPyR}{{\dot{\lambda}}^\by_R}
\newcommand*{\Zmixing}{{\mathcal{Z}}}
\newcommand*{\Zgrande}{{\mathring{Z}}}
\newcommand*{\volball}% for the unit ball volume
\newcommand*{\Cgrande}{{\mathring{C}_d}}   \newcommand*{\Cpiccola}{{\dot{C}_d}}
\newcommand{\const}{C^{\rm st}}
\def \bX {{\bf X}}
\newcommand \nrg {{\mathcal{E}}}
\newcommand{\abs}[1]{\left\lvert #1\right\rvert}
\newcommand{\vol}[1]{\abs{#1}}
\newcommand{\bgrandey}{{\mathring{\by}}}   \newcommand{\bpiccolay}{{\dot{\by}}}
\newcommand*{\MGrande}{{\mathring{\M}}}
\newcommand*{\MPiccola}{{\dot{\M}}}
\newcommand{\BGrande}{{\mathbb{B}}} % union of the n balls with radii \rdep around the large particles
\def \rpiccola {\dot{r}} 
\def \rdep {{\overset{\varodot}{r}}}
\newcommand*{\Grande}{\mathring{X}}
\newcommand*{\Piccola}{\dot{X}}
\newcommand*{\phidep}{{\phi}}
\newcommand*{\mugrande}{{\mathring{\mu}}}
\newcommand{\BadPath}{{\mathcal{B}}}
\newcommand*{\Vovlap}{\mathcal{V}_{\rm{ovlap}}}
\newcommand*{\rspace}{\,\hspace{-.14em}}
\definecolor{ForestGreen}{RGB}{18,90,18} % Myriam
\definecolor{amethyst}{rgb}{0.6, 0.4, 0.8} % Alexander  
\author{Myriam Fradon\thanks{Univ. Lille, CNRS UMR 8524, Laboratoire P. Painlevé, {\tt myriam.fradon@univ-lille.fr}} \and Alexander Zass\thanks{Weierstrass Institute for Applied Analysis and Stochastics, Berlin, {\tt zass@wias-berlin.de}} }
\date{}
\title{Infinite-dimensional diffusions and depletion interaction for a model of colloids
%and polymers
}
\begin{document}%%%%%%%%%%%%%%%%%%%%%%%%%%%%%%%%%%%%%%%%%%%%%%%%%%%%%%%%%%%%%%%%%
%%%%%%%%%%%%%%%%%%%%%%%%%%%%%%%%%%%%%%%%%%%%%%%%%%%%%%%%%%%%%%%%%
\maketitle

\begin{abstract}
    We consider infinite-dimensional random diffusion dynamics for the Asakura--Oosawa model of interacting hard spheres of two different sizes. We construct a solution to the corresponding SDE with collision local times, analyse its reversible measures, and observe the emergence of an attractive short-range depletion interaction between the large spheres. We study the Gibbs measures associated to this new interaction, exploring connections to percolation and optimal packing. 
    \medbreak
    \noindent
    \emph{Key words and phrases:} Stochastic differential equation, Hard core interaction, Reversible measure, Collision local time, Colloids, Depletion interaction, Gibbs point process\\
    \emph{MSC2020:} 60K35; 60H10; 60J55; 82D99
\end{abstract}

%\tableofcontents
%>>>>>>>>>>>>>>>>>>>>>>>>>>>>>>>>>>>>>>>>>>>>>>>>>>>>>>>>>>>>>>>>>>>>>>>>>>>>>>>>>
\section{Introduction}
%>>>>>>>>>>>>>>>>>>>>>>>>>>>>>>>>>>>>>>>>>>>>>>>>>>>>>>>>>>>>>>>>>>>>>>>>>>>>>>>>>
In this work, we consider colloidal dynamics, that is the motion of large colloidal particles within a medium formed by much smaller solvent particles.
The model is inspired by the Asakura--Oosawa (AO) model~\cite{AO54} of Chemical Physics; there, the authors introduce a size-asymmetric binary mixture of hard spheres to model colloidal suspensions, successfully explaining the origin of an attractive interaction that is observed when looking at large colloidal particles in a solution of non-adsorbing polymers, and which is referred to as ``attraction through repulsion''~\cite{LT11}. We consider a modification of the model, due to A. Vrij~\cite{Vr76}, in which the colloids are represented as large hard spheres, and the polymers as small penetrable spheres, which we will call particles. We present here a two-fold study of this model -- both from the dynamic (infinite-dimensional SDEs) point of view, and from the static (infinite-volume Gibbs point processes) one.

Indeed, already in the early 20th century, J. Perrin observed that these objects are characterised by a Brownian-type motion~\cite{Perrin1909}. To preserve shift invariance, we are therefore modelling infinitely many hard spheres in Euclidean space, randomly diffusing through a random medium of infinitely many randomly diffusing very small particles.
Since spheres cannot overlap, and spheres and particles do not overlap either, the model is an infinite-dimensional SDE with normal reflection on the boundary of the set of forbidden, overlapping configurations.

\medbreak

Since the pioneering work of A.\,V. Skorokhod~\cite{Skorokhod1,Skorokhod2} on half-spaces, pathwise solutions of reflected stochastic differential equations have been constructed on finite-dimensional domains under boundary conditions of increasing generality: see H. Tanaka~\cite{Tanaka} for convex domains, P.\,L. Lions and A.\,S. Sznitman~\cite{LionsSznitman} for admissible domains, H. Saisho~\cite{SaishoSolEDS} for domains satisfying exterior sphere and interior cone conditions, P. Dupuis and H. Ishii~\cite {DupuisIshii1,DupuisIshii2} for non-smooth domains, and~\cite{MultipleConstraint} for domains defined by a finite set of constraints.
On the other hand, infinite-dimensional Brownian diffusions, without boundary condition, have been first studied by R. Lang~\cite{Lang1} and H.-O. Georgii~\cite{GeorgiiCanonicalGibbsMeasures}. 
Our present work combines the difficulties of infinite-dimensional Brownian systems and those induced by the reflection on the non-smooth boundary of the domain of non-overlapping configurations.

H. Tanemura~\cite{TanemuraEDS} first used such an infinite-dimensional reflected SDE to study a system of infinitely many identical Brownian hard balls. 
His results have been extended to identical hard balls that also have an additional smooth interaction with potentially infinite range (see~\cite{fradon_roelly_2000,fradon_roelly_tanemura_2000,FR2,Tanemura_2022}).
Recently, we extended these ideas in~\cite{FKRZ24} to two-type hard-core diffusions, in order to construct dynamics for the Asakura--Oosawa model. 
However, the results therein were limited to a finite number of large hard spheres in a bath of a infinitely many particles; as a result, the model in~\cite{FKRZ24}, though isotropic, does not have the spatial invariance which can be expected from a colloidal suspension at such a microscopic scale. 

The aim of the present paper is then to constructs a more natural, shift-invariant Asakura--Oosawa dynamics, where both the hard spheres and the particles are infinitely many. This natural generalisation is clearly not straightforward: since spheres, contrary to particles, are not penetrable, an infinite number of spheres induces infinite-range non-overlap conditions; this requires precise estimates (namely, to control so-called \emph{bad paths}) which lead to new, highly non-trivial, technical difficulties.

\medbreak

Colloids and the phenomenon of depletion interaction have recently been investigated using the tools and language of Statistical Mechanics. For example: in~\cite{JT19}, the authors use the emergence of the effective depletion interaction to show that cluster expansion converges in an improved activity regime; in~\cite{jahnel2024variational}, the Gibbs variational principle is shown to hold also when the size of hard spheres is unbounded; in~\cite{WJL22}, the authors analyse geometric criteria for the absence of multi-body depletion interactions. In this framework, our work shows how the Gibbs measures at the heart of the above studies come up naturally as the reversible equilibrium measures of the colloid-polymer dynamics.

\medbreak

The core of the paper is dedicated to the proof of existence of a unique strong solution to the two-type infinite-dimensional SDE that is introduced in Section~\ref{sec:model}: in Section~\ref{sec:construct}, we construct a sequence of approximating processes via penalisation, and in Section~\ref{sec:convergence}, we prove that such a sequence converges to the unique solution of the SDE. 
In Section~\ref{sec:depletion}, we analyse the emergence of the depletion interaction. More precisely, we consider the projection of the reversible measures onto the subsystem of hard spheres. We prove a correspondence between the two-type and the one-type Gibbs measures, and present an infinite-dimensional gradient diffusion associated to these measures. Finally, we show that in the low-activity regime there is absence of percolation, whereas the high-activity regime leads to the phenomenon of optimal packing.

\subsection{Allowed configurations of spheres and particles}
%%%%%%%%%%%%%%%%%%%%%%%%%%%%%%%%%%%%%%%%%%%%%%%%%%%%%%%%%%

We consider an infinite number of large spheres with radius $\rgrande>0$ and centres $\grande_1,\grande_2,\dots$ in $\R^d$, $d\geq 2$, along with an infinite number of smaller spherical particles with radius $0<\rpiccola<\rgrande$ and centres $\piccola_1,\piccola_2,\dots$ in $\R^d$. This two-type system is subject to the following \emph{non-overlap constraints}: the large spheres are not allowed to overlap; the small particles can overlap each other, but are not allowed to overlap the large ones.

The configuration space is the set $\M$ of locally finite Radon point measures on 
$\X = \R^d \times \{ \circ,\cdot \}$, i.e.\ those of the form
$\bx = \bgrande\bpiccola = \sum_{i\in I} \delta_{\grande_i} + \sum_{k\in K} \delta_{\piccola_k}$,
with
$\grande_i\in \R^d \times\{ \circ \}$, $\piccola_k\in\R^d \times\{ \cdot \}$ for $ I,K\subset\N^*=\N\setminus\{0\}=\{1,2,\dots\}$,
such that $\bx(\Lambda)<+\infty$ for any compact subset $\Lambda\subset\X$.
$\M$ is endowed with the topology of vague convergence. 

The sum of two point measures is denoted by the juxtaposition $\bx\by := \bx+\by$. As the point measures we consider are a.s.~simple, we identify them with their support.
For any $\Lambda\subset\R^d$, $\by_{\Lambda}:=\by\cap\Lambda$ is the restriction of the sphere and particle configuration $\by$ to $\Lambda$.
We denote by $\M_\Lambda$ the corresponding set of point measures on $\Lambda\times\{\circ,\cdot\}$ and by $\mu_\Lambda$ the restriction to $\M_\Lambda$ of a measure $\mu$ on $\M$. 
The above notations are, of course, analogous when considering only sphere or particle configurations, leading to $\MGrande_\Lambda$ and $\MPiccola_\Lambda$, respectively. We denote by $dx_\Lambda$ the Lebesgue measure on $\Lambda$.

According to the non-overlap constraints, the subset $\D\subset\M$ of so-called \emph{allowed configurations} is 
\begin{equation} \label{eq:D}
    \D = \Bigg\{ \bx=\bgrande\bpiccola \in \M :
    \begin{array}{rl}
        \forall i \neq j, \ &|\grande_i - \grande_j| \ge 2\rgrande, \\
        \forall i , k, \    &|\grande_i - \piccola_k| \ge \rgrande+\rpiccola 
    \end{array}
    \Bigg\}.
\end{equation} 
The second constraint describes a \emph{depletion} shell of thickness $\rpiccola$ around each sphere, that is forbidden for the centres of the small particles.
Given an allowed configuration $\bgrande$ of hard spheres, in order for the configuration $\bgrande\bpiccola$ to also be allowed, the particles $\bpiccola$ cannot be placed within the forbidden area 
$\BGrande(\bgrande) := \bigcup_{\grande\in\bgrande}B(\grande,\rdep)$,
where $\rdep:=\rgrande+\rpiccola$ is the depletion radius.

Here and in the sequel, $B(x,R)$ denotes the open ball with centre $x$ and radius $R$, $\vol{A}$ is the $d$-dimensional volume of $A\subset\R^d$, and $\volball_d:=\vol{B(0,1)}$ the volume of the $d$-dimensional unit sphere.

\subsection{Dynamics for the AO model} \label{sec:model}%%%%%%%%%%%%%%%%%%%%%%%%%%%%%%%%%%%%%%%%%%%%%%%%%%%%%%%
Having fixed notations, we can now introduce our dynamical model.
Fix a probability space $(\Omega,\mathcal{F},P)$; the system is described as follows:
\begin{itemize}
    \item Infinitely many \emph{hard spheres} with radius $\rgrande>0$, whose centres at time $t$ are denoted by $\big(\Grande_i(t)\big)_{i\in\N^*}$, move according to independent Brownian motions $\big(\WGrande_i\big)_{i\in\N^*}$. 
    \item The hard spheres evolve in a time-inhomogeneous {\em random medium} consisting of infinitely many \emph{small particles} with radius $\rpiccola\in(0,\rgrande)$, whose centres at time $t$ are denoted by $\big(\Piccola_k(t)\big)_{k\in\N^*}$, themselves moving according to independent Brownian motions $\big(\WPiccola_k\big)_{k\in\N^*}$.
    \item The only interactions are due to the non-overlap constraints.
    The \emph{local times} $(L_{ij})_{i,j \in \N^*}$ ensure that there is no pairwise overlap between the hard spheres: in case of a collision, they induce an instantaneous repulsion given by a normal reflection at the boundary of the set of allowed configurations.
    Similarly, the \emph{local times} $(\ell_{ik})_{i, k \in \N^*}$ model the non-overlap constraint between small particles and hard spheres.
    As a result, at each time, the two-type configuration should belong to the set $\D$ of allowed configurations.
\end{itemize}
This two-type dynamics can be described by the following doubly infinite SDE with reflection.
\begin{equation}\label{eq:SDE2infty} \tag{${\mathcal{S}_\infty}$}
\begin{cases}
    \text{for any } i\in\N^*, \, k\in\N^*, \,  t\in\R_+=[0,+\infty) ,                    \\ \disp  
    d\Grande_i(t) 
    = d\WGrande_i(t) 
      +\sum_{j=1}^{+\infty} \big(\Grande_i-\Grande_{j}\big)(t) dL_{ij}(t) 
       +\sum_{k=1}^{+\infty} \big(\Grande_i-\Piccola_{k}\big)(t) d\ell_{ik}(t) \\ \disp 
    d\Piccola_k(t) 
    = \sigmaP\, d\WPiccola_k(t) 
      +\sigmaP^2 \sum_{j=1}^{+\infty} \big(\Piccola_k-\Grande_j\big)(t)  d\ell_{ik}(t) \\ \disp
    \text{for any } j\in\N^*,\ L_{ij}(0)=\ell_{ik}(0)=0, \ L_{ij}=L_{ji},     \\ \disp
    \int_0^t \un_{|\Grande_i(s)-\Grande_j(s)|\neq2 \,\rgrande} \ dL_{ij}(s)=0,\ L_{ii}\equiv 0,\\ 
    \disp \int_0^t \un_{|\Grande_i (s)-\Piccola_k(s)|\neq\rdep} \, d \ell_{ik}(s)=0.
\end{cases}
\end{equation}
The \emph{diffusion coefficient} $\sigmaP>0$ is fixed, related to the temperature and to the mass of the small particles. We do not introduce a corresponding coefficient $\mathring{\sigma}$ for the hard spheres, i.e.\ we simply set it equal to $1$; this is not a restriction, as it suffices to choose, as the time unit of the model, the time at which the standard deviation of each coordinate of the Brownian motions $\WGrande_i$ is equal to the length unit of the model.

\subsection{Main results} %%%%%%%%%%%%%%%%%%%%%%%%%%%%%%%%%%%%%%%%%%%%%%%%%
%%%%%%%%%%%%%%%%%%%%%%%%%%%%%%%%%%%%%%%%%%%%%%%%%%%%%%%%%%%%%%%%%%%%%%%%%%

The following set of Gibbs measures will be the reversible measures for the above system.
\begin{definition}
    For a probability measure $\mu$ on $\M$, we write $\mu\in\Gcal_{\zgrande,\zpiccola}$
    and say that it is a \emph{Gibbs measure} on $\M$, with activities $\zgrande,\zpiccola>0$, associated to the above hard-core interaction, if, for any bounded subset $\Lambda\subset\R^d$ and any positive measurable function $F$ on $\D$,
    \begin{equation} \label{eq:mu}
    \begin{split}
        &\int_\D F(\bx) \, \mu(d\bx) 
        := \int_\D \frac{1}{Z_\Lambda(\by)} \int_{\MGrande} \int_{\MPiccola} 
       F(\by_{\Lambda^c} \bgrande\bpiccola) \, 
       \un_{\D}(\by_{\Lambda^c} \bgrande\bpiccola) \, 
       {\pi^\zpiccola_\Lambda}(d\bpiccola)  \, {\pi^\zgrande_\Lambda}(d\bgrande) \, \mu(d\by),
    \end{split}
    \end{equation}
    where $\pi^\zgrande_\Lambda(d \bgrande)$ denotes the Poisson point process on $\MGrande_\Lambda$ with intensity measure $\zgrande dx_\Lambda$, $\pi^\zpiccola_\Lambda(d \bpiccola)$ denotes the Poisson point process on $\MPiccola_\Lambda$ with intensity measure $\zpiccola dx_\Lambda$, and the normalisation factor $Z_\Lambda(\by)= Z_\Lambda(\by_{\Lambda^c}) = \iint \un_{\D}(\by_{\Lambda^c} \bgrande\bpiccola) \, {\pi^\zpiccola_\Lambda}(d\bpiccola) \, {\pi^\zgrande_\Lambda}(d\bgrande)$ is the partition function.
\end{definition}
\begin{remark}
   Note that $\mu\in\Gcal_{\zgrande,\zpiccola}$ is concentrated on the set $\D$ of admissible configuration. Also note that $Z_\Lambda(\by)\leq 1$, since $\un_{\D}\le1$,
and $Z_\Lambda(\by)>0$, since $\by\in\D$ and $\bgrande\bpiccola$ is a finite configuration.

While the existence of at least one such measure, for any choice of the activities $\zgrande$ and $\zpiccola$, is a classical result (see, e.g.~\cite{Ruelle_book}), it is conjectured~\cite[Chapter 3]{LT11} that a non-uniqueness phase transition, that is $\text{Card}(\Gcal_{\zpiccola,\zgrande})>1$, occurs for large enough hard-sphere activities. 
\end{remark}

\medbreak

The first main result of this paper is the following theorem, whose proof is split in Sections~\ref{sec:construct} and~\ref{sec:convergence}. See Section~\ref{sec:strategy} for the proof strategy.
\begin{theorem}\label{theorem:SDE2infty}
    For any values of the activities $\zgrande,\zpiccola>0$, for any Gibbs measure $\mu\in\Gcal_{\zgrande,\zpiccola}$, the two-type infinite-dimensional SDE~\eqref{eq:SDE2infty} admits a unique 
    $\D$-valued strong solution, for $\mu$-almost every deterministic initial condition. Moreover, this solution is time-reversible
    if the initial condition is random and distributed according to $\mu$. 
\end{theorem}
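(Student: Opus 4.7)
The plan is a double approximation: replace the hard-core reflections by smooth penalising potentials of small range $\eps>0$, and truncate the system to those spheres and particles whose centres lie initially in a large ball $B_R$ (while freezing those outside). Each approximating SDE, call it $(\mathcal{S}_\infty^{\eps,R})$, involves only finitely many moving points and has locally Lipschitz coefficients, so classical theory yields a unique strong solution $\bX^{\eps,R}$. The target dynamics is then obtained by passing to the limit first in $\eps\to 0$---recovering a finite-volume reflected dynamics whose well-posedness follows from the two-type theory of~\cite{FKRZ24}---and then in $R\to\infty$.

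The serious step is the passage $R\to\infty$, i.e.\ controlling what sits far away from a fixed observation ball $B_\rho$. Following the scheme used for infinite systems of Brownian balls~\cite{Lang1,TanemuraEDS,fradon_roelly_2000}, one fixes $\rho$ and shows that, for $\mu$-a.e.\ initial condition, with probability tending to one as $R\to\infty$, the restrictions of $\bX^{R}$ and $\bX^{R'}$ to $B_\rho$ coincide on $[0,1]$ for every $R'\ge R$. The argument partitions $\R^d$ into shells $B_{R+k+1}\setminus B_{R+k}$, bounds the number of points in each shell via a Ruelle-type intensity estimate valid for any $\mu\in\Gcal_{\zgrande,\zpiccola}$, and combines this with Gaussian tail estimates on the driving Brownian motions. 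The genuinely new difficulty---absent in~\cite{FKRZ24}, where the number of spheres was finite---is that a collision inside $B_\rho$ can in principle be triggered by a propagating chain of sphere--sphere reflections originating arbitrarily far away: the \emph{bad paths} of the introduction. The heart of Section~\ref{sec:convergence} is therefore an estimate showing that such a chain cannot link $B_\rho$ to $B_R^c$ within unit time with non-negligible probability. This chain/percolation-type estimate is where I expect the main work to lie.

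Pathwise uniqueness of the limit is then obtained in the spirit of Tanaka: for two $\D$-valued solutions $\bX,\bX'$ driven by the same Brownian motions, It\^o's formula applied to a localisation of $\sum_i |\Grande_i-\Grande_i'|^2 + \sigmaP^{-2}\sum_k |\Piccola_k-\Piccola_k'|^2$ produces a reflection contribution whose sign is non-positive, because the vectors $\Grande_i-\Grande_j$ and $\Grande_i-\Piccola_k$ appearing in~\eqref{eq:SDE2infty} are precisely the inward normals to $\partial\D$ at the corresponding collision points. Strong existence then follows from Yamada--Watanabe. Finally, time-reversibility under $\mu$ comes from the Gibbs property: the generator is formally $\tfrac12\Delta_{\bgrande}+\tfrac{\sigmaP^2}{2}\Delta_{\bpiccola}$ with Neumann-type conditions on $\partial\D$, and the coefficients $1$ and $\sigmaP^2$ in front of the reflection terms in~\eqref{eq:SDE2infty} are chosen exactly so that a Georgii--Nguyen--Zessin integration by parts makes this generator symmetric in $L^2(\mu)$. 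This symmetry is already present for the penalised finite-volume approximations (whose equilibrium measures are explicit densities on the allowed configurations in $B_R$) and passes to the limit together with the dynamics. The Markov property then extends the resulting stationary solution from $[0,1]$ to $\R_+$.
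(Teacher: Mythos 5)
Your sketch has the right skeleton---finite-dimensional approximations, a chain estimate to tame the infinite range of the hard-core constraint, and a passage to the limit in $R$---and you correctly single out the long-chain/propagation estimate as the genuinely new difficulty over~\cite{FKRZ24}. But at each of the three ``technical'' stages you propose a route that is different from the paper's, and in two of them the differences hide real gaps.

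\textbf{Nature of the penalisation.} You introduce an extra layer of approximation: smooth the hard-core reflection by a potential of small range $\eps$, then let $\eps\to 0$ to recover~\cite{FKRZ24}'s finite-volume reflected dynamics. The paper never does this; it invokes~\cite[Proposition 2.2]{FKRZ24} directly for the finite reflected SDE~\eqref{eq:SDEnSpheres}. The ``penalisation'' $\psiGyR,\psiPyR$ in~\eqref{eq:SDEnSpheres} is not a smoothing of the hard core at all---it is a \emph{confinement field} around $B(0,R)$ which accounts for the presence of the frozen outside configuration $\by_{B(0,R)^c}$. This distinction matters: the construction is designed so that the reversible measure $\nu^\by_{R,\ngrande,\npiccola}$ of the penalised process, and its Poissonian mixture $\mu^\by_R$, can be compared \emph{in total variation} to the Gibbs measure $\mu$ (see the argument leading to~\eqref{eq:TotalVariationUpperBound}). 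Your $\eps$-smoothing layer is superfluous, and it also loses the explicit form of the reversible law that the paper needs later. You also omit the key device the paper uses to couple the two sources of randomness (initial condition, Brownian motion): comparing the law of $X^{\by,R}$ started from the deterministic truncation $\by_{B(0,R)}$, averaged over $\by\sim\mu$, to the law $Q^\by_R$ of the reversible penalised process started from $\mu^\by_R$. That total-variation step is what makes both the Borel--Cantelli argument and the reversibility transfer work.

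\textbf{Uniqueness.} You propose pathwise uniqueness via It\^o applied to $\sum_i|\Grande_i-\Grande_i'|^2+\sigmaP^{-2}\sum_k|\Piccola_k-\Piccola_k'|^2$, using that the reflection vectors are inward normals, followed by Yamada--Watanabe. This is not what the paper does, and it is not clear it can be made to work here. First, the domain $\D$ is not convex (it is an intersection of \emph{complements} of convex sets), so a Tanaka-type argument on the reflection contribution does not give a clean sign without extra work of Saisho/Lions--Sznitman type. Second, and more seriously in infinite dimension, the functional $\sum_i|\Grande_i-\Grande_i'|^2$ involves infinitely many terms and there is no a priori reason for this series to converge, let alone to allow applying It\^o's formula and passing through stopping times; you gesture at a ``localisation'' but this is precisely where the infinite-range chain problem bites again. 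The paper deliberately claims only a weaker and well-posed form of uniqueness: uniqueness within a class of regular paths (Remark~\ref{rmk:uniqueness}), where each sphere or particle belongs, on successive time intervals, to finite interacting subsystems, so that local reduction to the finite-dimensional SDE makes uniqueness immediate. Your plan would prove a stronger statement, but as written it has a genuine gap.

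\textbf{Reversibility.} You propose to show that the formal generator $\tfrac12\Delta_{\bgrande}+\tfrac{\sigmaP^2}{2}\Delta_{\bpiccola}$ with Neumann-type boundary conditions is symmetric in $L^2(\mu)$ via a Georgii--Nguyen--Zessin integration by parts, first at the penalised level, then passing to the limit. The paper instead avoids any generator computation: reversibility of the penalised process under $Q^\by_R$ is known (it is the content of the finite-dimensional result from~\cite{FKRZ24}), and it passes to the limit simply because $\sum_R\mathbf{d}_{TV}(R)<\infty$, as in Section~\ref{sec:reversibility}. Your generator route is not wrong in spirit, but it would require identifying the domain of the Dirichlet form, which is substantial extra work that the TV-approximation argument dispenses with.

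In short: you have the right architecture and the right intuition about where the hard work is (the chain estimates---Lemma~\ref{prop:LongChainDynamics} for static chains in the reversible law, Lemma~\ref{prop:OmegaFast} for fast displacements), but the $\eps$-smoothing layer is a misreading of the penalisation mechanism, the total-variation comparison with $Q^\by_R$ is missing entirely, the It\^o/Yamada--Watanabe uniqueness argument is not justified in infinite dimension, and the reversibility via generator symmetry is a different (and heavier) route than the one the paper takes.
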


In Section~\ref{sec:depletion}, we investigate the emergence of an effective depletion interaction. The main results can be summarised in the following two theorems.
\begin{theorem}
    For any values of the activities $\zgrande,\zpiccola>0$, the sphere marginal probability measure $\mugrande$ of any $\mu\in\Gcal_{\zgrande,\zpiccola}$ is an element of $\Gcal_{\zgrande}(\zpiccola\nrg)$, i.e.\ it solves, for any bounded $\Lambda\subset\R^d$ and any positive measurable $F$ on $\MGrande$,
    \begin{equation}\label{eq:DLR_dep}
    \begin{split}
        \int_\D F(\bgrande) \, \mugrande(d\bgrande) = \int_\D \frac{1}{Z_\Lambda(\bgrandey)} \int_{\MGrande} 
        F(\bgrande \bgrandey_{\Lambda^c}) \, 
        e^{-\zpiccola\nrg_\Lambda(\bgrande\bgrandey_{\Lambda^c})}\un_{\D}(\bgrandey_{\Lambda^c} \bgrande) \, 
        \pi^\zgrande_{\Lambda}(d\bgrande) \, \mugrande(d\bgrandey),
    \end{split}
    \end{equation} 
    where $\nrg_\Lambda (\bgrande) := \vol{\BGrande(\bgrande_\Lambda)\setminus \BGrande(\bgrande_{\Lambda^c})}$ has interaction range $2\rdep$.
    Moreover, the following behaviour is observed:
    
    \emph{(Low activity)} 
    There exists a critical activity $\zgrande_{\rm c}>0$ such that, for any $\zgrande<\zgrande_{\rm c}$, independent of any $\zpiccola>0$, $\mugrande$-almost surely, there is no percolation of interacting (i.e.\ at distance $2\rdep$) hard spheres;
    
    \emph{(High activity)} For any $\zpiccola>0$, any measure $\mugrande\in\Gcal_{\zgrande}(\zpiccola\nrg)$ attains the closest-packing density as $\zgrande\to\infty$.
\end{theorem}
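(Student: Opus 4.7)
The plan is to establish the three claims in sequence using the Georgii-Nguyen-Zessin (GNZ) formalism. For the DLR characterisation \eqref{eq:DLR_dep}, the starting observation is that small particles have no particle-particle interaction: the constraint $\un_\D$ couples them only to hard spheres. Writing GNZ for the particle type in $\mu$ therefore produces a Papangelou intensity
\[
    \zpiccola\,\un_{\R^d\setminus\BGrande(\bgrandey)}(x),
\]
which depends on the whole configuration only through $\bgrandey$, and not on $\bpiccolay$. By the Mecke characterisation of the Poisson point process, this forces the conditional law of $\bpiccolay$ given $\bgrandey$ to be a Poisson process on $\R^d\setminus\BGrande(\bgrandey)$ with intensity $\zpiccola$. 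I expect this step to be the main obstacle: the DLR equation defines $\mu$ only through finite-volume conditionals, so upgrading to an infinite-volume conditional Poisson statement requires a consistency argument along an exhausting sequence $\Lambda_n\uparrow\R^d$, together with measurability checks for the $\sigma$-algebra generated by the whole of $\bgrandey$.

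Next I apply GNZ to the sphere type in $\mu$, with a test function depending only on $\bgrandey$, and integrate out $\bpiccolay$ using the conditional Poisson law just obtained. The sphere-particle non-overlap factor averages to $\exp\bigl(-\zpiccola\,\vol{B(x,\rdep)\setminus\BGrande(\bgrandey)}\bigr)$, yielding an effective single-type GNZ equation for $\mugrande$ with Papangelou intensity
\[
    \lambda_{\mathrm{eff}}(x,\bgrandey) \;=\; \zgrande\,\un_{\{\bgrandey\cap B(x,2\rgrande)=\emptyset\}}\,\exp\bigl(-\zpiccola\,\vol{B(x,\rdep)\setminus\BGrande(\bgrandey)}\bigr).
\]
Since $\vol{B(x,\rdep)\setminus\BGrande(\bgrandey)}=\vol{\BGrande(\bgrandey\cup\{x\})}-\vol{\BGrande(\bgrandey)}$, this intensity is exactly the one-body conditional derivative of the hereditary energy $\bgrande\mapsto\zpiccola\,\vol{\BGrande(\bgrande)}$ augmented with the sphere-only hard-core. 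The classical GNZ--DLR equivalence for such Hamiltonians then gives that $\mugrande$ satisfies the finite-volume DLR with Hamiltonian $\zpiccola\bigl(\vol{\BGrande(\bgrande\bgrandey_{\Lambda^c})}-\vol{\BGrande(\bgrandey_{\Lambda^c})}\bigr)=\zpiccola\,\vol{\BGrande(\bgrande)\setminus\BGrande(\bgrandey_{\Lambda^c})}=\zpiccola\,\nrg_\Lambda(\bgrande\bgrandey_{\Lambda^c})$, which is precisely \eqref{eq:DLR_dep}.

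For the low-activity no-percolation claim, the key input is the bound $\lambda_{\mathrm{eff}}(x,\bgrandey)\le \zgrande$, immediate since both factors are at most $1$. Standard stochastic domination results for Gibbs point processes with bounded Papangelou intensity then show that $\mugrande$ is stochastically dominated by the Poisson process of intensity $\zgrande$ on $\R^d$. For $\zgrande$ below the continuum-percolation threshold of the associated Boolean model of balls of radius $\rgrande$, no percolation occurs under the dominating measure, hence also $\mugrande$-almost surely; this provides a critical $\zgrande_{\mathrm c}>0$. For the closest-packing limit $\zgrande\to\infty$, I would analyse the finite-volume conditional density, proportional to $\zgrande^{|\bgrande|}\exp(-\zpiccola\,\nrg_\Lambda)\un_\D$: as $\zgrande\to\infty$ the combinatorial factor $\zgrande^{|\bgrande|}$ dominates, forcing concentration on configurations of maximal cardinality, which attain the closest-packing density up to a boundary correction of lower order in $\vol{\Lambda}$. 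Taking the thermodynamic limit along a Van Hove sequence and invoking translation invariance yields the claim; the main technical task here is to make the concentration estimate uniform in the external boundary condition, which can be done by exploiting that the depletion interaction has finite range $2\rdep$.
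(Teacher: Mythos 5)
Your route to the DLR identity \eqref{eq:DLR_dep} is a valid one, but it is genuinely different from --- and heavier than --- what the paper does. You go through GNZ, first characterising the infinite-volume conditional law of the particles given the spheres as a Poisson process (a step you rightly flag as the main obstacle, requiring a consistency/measurability argument along an exhausting sequence), and then invoking the GNZ$\leftrightarrow$DLR equivalence theorem of Nguyen--Zessin to come back to a DLR statement. The paper never leaves the DLR formalism: for a test function $F(\bgrande)$ depending only on the spheres, it plugs directly into the two-type finite-volume DLR \eqref{eq:mu}, performs the $\pi^\zpiccola_\Lambda$ integration explicitly (the indicator $\un_{\D}$ factors, and the particle integral is a Poisson void probability equal to $\exp\bigl(-\zpiccola\vol{\BGrande(\bgrande\bgrandey_{\Lambda^c})\cap\Lambda}\bigr)$), and absorbs the $\bgrande$-independent factors into the partition function. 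This is shorter and entirely sidesteps both the conditional-Poisson upgrade and the GNZ$\Rightarrow$DLR theorem, which are nontrivial external inputs. Your Papangelou-intensity bookkeeping (in particular the identity $\vol{B(x,\rdep)\setminus\BGrande(\bgrandey)}=\nrg(\bgrandey\cup\{x\})-\nrg(\bgrandey)$ and the simplification to $\nrg_\Lambda$) is correct.

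For the absence of percolation at small $\zgrande$, you again take a route the paper deliberately avoids: you bound $\lambda_{\mathrm{eff}}\le\zgrande$ and invoke stochastic domination of $\mugrande$ by the Poisson process of intensity $\zgrande$, then appeal to subcriticality of the associated Boolean model. This is a legitimate and more classical argument (it yields an implicit threshold), whereas the paper recycles the chain estimate of Lemma~\ref{prop:LongChainDynamics} already proved for the dynamics, giving an explicit bound $\zgrande_c = \bigl(((2\rdep)^d-(2\rgrande)^d)\volball_d\bigr)^{-1}$; the paper's remark before Proposition~\ref{prop:No percolation} makes clear they are consciously not following the domination route. One concrete slip: interaction of spheres means $|\grande_i-\grande_j|<2\rdep$, so the relevant Boolean model attaches balls of radius $\rdep$ (not $\rgrande$, as you wrote) to the Poisson points; with $\rgrande$ the domination argument does not address the right connectivity.

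The closest-packing part is where your proposal has a genuine gap. You sketch a finite-volume concentration heuristic ($\zgrande^{|\bgrande|}$ overwhelming the energy factor, forcing maximal cardinality) and propose to control boundary effects by the finite range of the interaction. This does not straightforwardly convert to a statement about the intensity of an \emph{infinite-volume} Gibbs measure: the order of limits $\zgrande\to\infty$ versus $\Lambda\uparrow\R^d$ is exactly the delicate point, and finite-volume concentration with uniform boundary control is not on its own enough to conclude $\rho_\zgrande(\zpiccola)\to\rho^*$. The paper handles this by citing~\cite[Proposition~2]{Mase_2001}, whose proof rests on the Gibbs variational principle (free-energy minimisation over translation-invariant measures); that is the missing tool in your sketch, and without it you would essentially have to reprove Mase's theorem.
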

It is worth noting here that the above absence of percolation result in the low activity regime (Proposition~\ref{prop:No percolation} below) relies on the same estimates needed to control the pathologies in the dynamics (as described in the next section) required for proving Theorem~\ref{theorem:SDE2infty}.

\begin{theorem}\label{th:DepletionDynamics}
Let $\nrg (\bgrande) := \displaystyle \vol{\BGrande(\bgrande)}$. 
    For $d\geq 3$ and $\rpiccola/\rgrande\leq \frac{2}{3}\sqrt{3}-1$, for any values $\zgrande,\zpiccola>0$, the following gradient dynamics admits a unique strong solution, and $\mugrande\in\Gcal_{\zgrande}(\zpiccola\nrg)$ is reversible for this dynamics:
    \begin{equation}\tag{${\mathcal{S}}^\textrm{dep}$}\label{eq:Sdep}
    \begin{cases}
    \begin{array}{l}
        \textrm{for any } i\in\N^*, \, t \in \R^+ ,                             \\ 
        d \Grande_i (t) =
        d \WGrande_i(t)
        - \disp \frac{\zpiccola}{2} \,
        \nabla_i\nrg(\Grande_1,\Grande_2,\dots)\, d t + \sum_{j=1}^{+\infty}\big(\Grande_i-\Grande_j\big)(t) d L_{ij}(t) \, , \\
        \disp
        \text{for any } j\in\N^*,\ L_{ij}(0) = 0,\ L_{ij} = L_{ji},\\
        \disp \int_0^t \un_{|\Grande_i (s)-\Grande_j(s)|\neq 2 \,\rgrande} \, d  L_{ij}(s) = 0, \ L_{ii} \equiv 0.
    \end{array}
    \end{cases}	
    \end{equation}
\end{theorem}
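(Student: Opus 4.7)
My plan is to establish both existence/uniqueness and reversibility for~\eqref{eq:Sdep} by combining a careful geometric analysis of the depletion energy $\nrg(\bgrande)=\vol{\BGrande(\bgrande)}$ with an adaptation of the penalisation-and-approximation scheme of Sections~\ref{sec:construct}--\ref{sec:convergence}. A key structural observation is that $\nrg$ is \emph{finite-range}: moving or removing $\grande_i$ only alters the volume inside $B(\grande_i,2\rdep)$, so $\nabla_i\nrg(\bgrande)$ depends only on those $\grande_j$ with $|\grande_i-\grande_j|\leq 2\rdep$.

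The first step is the geometric analysis of $\nrg$ on the set $\D$ of non-overlapping hard-sphere configurations. By inclusion–exclusion, $\nabla_i\nrg(\bgrande)$ can be written as an explicit surface integral over the \emph{exposed} part of $\partial B(\grande_i,\rdep)$, whose combinatorial description is controlled by pairwise, triple, \dots intersections of depletion balls. The ratio condition $\rpiccola/\rgrande\leq \tfrac{2}{\sqrt{3}}-1$ is precisely the critical threshold that rules out triple intersections for any $\bgrande\in\D$: three mutually-touching hard spheres have centres forming an equilateral triangle of side $2\rgrande$, hence circumradius $2\rgrande/\sqrt{3}$, so a common point of three depletion balls requires $\rdep\geq 2\rgrande/\sqrt{3}$, which the condition excludes. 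Consequently, on $\D$ the energy $\nrg$ reduces to a finite sum of pairwise overlaps, and $\nabla_i\nrg$ admits an explicit two-body form, is bounded uniformly in $\bgrande$, and is Lipschitz away from tangency.

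Next, I would adapt the penalisation construction of Section~\ref{sec:construct} to the one-type setting with this drift added: smooth out the hard-core constraint by a penalty parameter $n$, solve the resulting finite-volume SDE (the drift being bounded and Lipschitz, this is classical), and pass to the infinite-volume, no-penalty limit by the same tightness and bad-path estimates used for~\eqref{eq:SDE2infty} in Section~\ref{sec:convergence}. The additional drift $-\tfrac{\zpiccola}{2}\nabla_i\nrg$ enters as a local, uniformly bounded, Lipschitz perturbation and does not alter the structure of the a priori bounds; the dimension restriction $d\geq 3$ plays the same role as in the two-type case, ensuring triple-collision events are negligible and that the local-time estimates close up.

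For reversibility, I would verify symmetry of the infinitesimal generator of~\eqref{eq:Sdep} in $L^2(\mugrande)$. Using the DLR equation~\eqref{eq:DLR_dep}, for test functions depending on finitely many spheres inside a bounded box $\Lambda$, the computation reduces to a finite-dimensional one against the density $Z_\Lambda(\bgrandey)^{-1}\exp(-\zpiccola\nrg_\Lambda)\un_\D$ with respect to $\pi^\zgrande_\Lambda$. A standard integration by parts in each sphere coordinate makes the drift $-\tfrac{\zpiccola}{2}\nabla_i\nrg_\Lambda$ appear exactly as in~\eqref{eq:Sdep}, while boundary terms from the indicator $\un_\D$ are absorbed by the local-time support conditions $\int\un_{|\grande_i-\grande_j|\neq 2\rgrande}\,dL_{ij}=0$. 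The main technical obstacle I expect is the Lipschitz verification of $\nabla_i\nrg$ on $\D$: although triple intersections are ruled out by the ratio condition, the exposed surface $\partial B(\grande_i,\rdep)\setminus \BGrande(\bgrande\setminus\grande_i)$ still changes its combinatorial type across configurations where two depletion spheres become tangent, so uniform control requires careful surface-integral estimates together with the observation that such degenerate configurations form a $\mugrande$-negligible set.
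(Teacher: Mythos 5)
Your geometric analysis of the ratio condition agrees exactly with the paper: $\rpiccola/\rgrande\leq\tfrac{2}{3}\sqrt{3}-1$ (equivalently $\rdep<2\rgrande/\sqrt{3}$) rules out triple intersections of depletion balls for any $\bgrande\in\D$, so $\nrg$ reduces to a sum of pairwise overlap terms. This is precisely the paper's starting point. After that, however, the routes diverge substantially, and your proposal misplaces the real technical content.

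The paper does not rebuild the penalisation-and-convergence machinery from Sections~\ref{sec:construct}--\ref{sec:convergence} for this theorem. Once $\nrg$ is rewritten as $\nrg(\bgrande)=n\volball_d\rdep\rspace^d-\sum_{i<j}\Vovlap\big(|\grande_i-\grande_j|/(2\rdep)\big)$, the dynamics~\eqref{eq:Sdep} becomes exactly a system of hard balls with an additional smooth, finite-range, gradient pair drift. This is precisely the class treated in~\cite{fradon_roelly_tanemura_2000}, so the paper simply cites~\cite[Theorem 1.4 and Remark 1.2]{fradon_roelly_tanemura_2000} for both existence/uniqueness of a strong solution and reversibility of any Gibbs measure in $\Gcal_{\zgrande}(\zpiccola\nrg)$. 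The only thing left to verify is the regularity of the one-variable function $\Vovlap$, and the paper does this by explicit differentiation: $\Vovlap'(u)=-2\volball_{d-1}\rdep\rspace^d(1-u^2)^{(d-1)/2}$ and $\Vovlap''(u)=2(d-1)\volball_{d-1}\rdep\rspace^d\,u(1-u^2)^{(d-3)/2}$, which is where the hypothesis $d\geq 3$ is used. Your proposal to rerun the penalisation scheme from scratch would not be wrong, but it is much heavier than what is needed, and it is not what the dimension hypothesis is doing here.

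Your main stated worry — that $\nabla_i\nrg$ might fail to be Lipschitz because the exposed surface $\partial B(\grande_i,\rdep)\setminus\BGrande(\bgrande\setminus\grande_i)$ changes combinatorial type when two depletion spheres become tangent — is exactly the concern that the two-body reduction eliminates. Once you have the global pairwise decomposition on $\D$, the drift $\nabla_i\nrg$ is a finite sum of terms $\Vovlap'\big(|\grande_i-\grande_j|/(2\rdep)\big)\,\tfrac{\grande_i-\grande_j}{2\rdep\,|\grande_i-\grande_j|}$, each term depends only on a single pair, and its regularity follows from that of the single-variable function $\Vovlap$ (plus the hard-core bound $|\grande_i-\grande_j|\geq 2\rgrande$). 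There is no surface-integral bookkeeping and no degenerate-configuration set to excise; the DLR-level integration by parts for reversibility reduces to the known finite-dimensional identity for hard-sphere gradient systems in $\D$ and needs no separate treatment. If you want to pursue your route, the step you should \emph{add} is the explicit computation of $\Vovlap'$, $\Vovlap''$ and the observation that these are bounded for $d\geq 3$; the step you should \emph{remove} is the discussion of combinatorial type changes.
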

\begin{remark}
    The effect of the depletion interaction $\nrg$ is stronger the larger $\zpiccola$ is, but both the measure and the associated dynamics exist for any value of the activity, cf. Section~\ref{sec:depletionSDE}.
    Moreover, because of the hard-sphere exclusion, the average number of spheres per unit volume is bounded from above, independently of the value of the activity $\zgrande$, which can take arbitrary values.
\end{remark}
Note that the solutions of the above theorems are unique in the sense of~\cite[Lemma 5.4]{TanemuraEDS}, that is as elements of a set of regular paths, see Remark~\ref{rmk:uniqueness}.

\begin{remark}
    For the two SDEs above,~\eqref{eq:SDE2infty} and that of Theorem~\ref{th:DepletionDynamics}, since the dynamics are Markovian, in order to construct a strong solution on $\R_+$, it is enough to construct one on the time interval $[0,1]$. Indeed, the solution on $[0,2]$ can be obtained by glueing together the solution on $[0,1]$ with a new dynamics on $[0,1]$ started from the final configuration of the first one. This can be iterated to extend the solution to the whole of $\R_+$. In what follows, then, we will construct the solutions only on $[0,1]$, as the existence of solutions on $\R_+$ follows.
\end{remark}

%>>>>>>>>>>>>>>>>>>>>>>>>>>>>>>>>>>>>>>>>>>>>>>>>>>>>>>>>>>>>>>>>>>>>>>>>>>>>>>>
\subsection{Strategy of proof}\label{sec:strategy}
%>>>>>>>>>>>>>>>>>>>>>>>>>>>>>>>>>>>>>>>>>>>>>>>>>>>>>>>>>>>>>>>>>>>>>>>>>>>>>>>

The following two sections are dedicated to the proof of Theorem~\ref{theorem:SDE2infty}. As the proof is quite technical, requiring very precise control of rare pathological events, we present here an extended proof sketch. 

The underlying idea is relatively simple: even though~\eqref{eq:SDE2infty} is infinite-dimensional because of the infinite number of balls (spheres and particles), locally, it should be essentially finite-dimensional, in the sense that any Brownian ball interacts (with high probability) only with a finite number of other spheres or particles. More precisely, since Brownian motion is relatively slow (moving at speed $\sqrt{t}$), we should be able to assume that, over a small time interval, the trajectory of a ball depends exclusively on its own Brownian motion and the collisions due to neighbouring balls, safely ignoring far-away ones, as the probability that they cover their distance is negligible. With this reasoning, we can then divide the time interval $[0,1]$ in finitely many short time intervals in which the balls interact only in finite groups of controlled size, thus reverting to the framework of finite-dimensional SDEs, in which the existence of solutions is well-known. 

For this simple strategy to work, not only should the Brownian balls not move too fast, but also far-away balls should not influence each other too quickly in any other way; this is unfortunately not true in general. 
Indeed, a domino-like effect can cause distant balls to interact almost instantaneously if there is a collision along a long chain of spheres that are very close to each other. In order to rigorously follow the above reasoning, we then need to show that these two pathologies -- too-fast Brownian balls and too-long chains of spheres --  almost never occur; this is the main technical difficulty.

In more detail, the main steps of the proof are as follows:
\medbreak

\textbf{Step 1: Construction of penalised finite-dimensional dynamics.}
In Section~\ref{sec:construct}, we consider, for any initial configuration $\by=\grandey\piccolay$ and for any ball of radius $R>0$ around the origin, penalising functions $\psiGyR$ and $\psiPyR$ that will keep around $B(0,R)$ the spheres and particles initially placed inside $B(0,R)$ while also penalising them for getting too close to balls outside $B(0,R)$, cf.~\eqref{eq:psiGR}. Letting move and collide only those balls initially placed in $B(0,R)$, while keeping frozen those outside, we obtain the finite-dimensional SDE~\eqref{eq:SDEnSpheres}, in which the influence of the infinitely many frozen balls is felt only via the drifts $\nabla\psiGyR$ and $\nabla\psiPyR$. For any number $n$ of moving spheres and $m$ of moving particles,~\eqref{eq:SDEnSpheres} admits a unique strong solution $[0,1]\ni t\mapsto\bX^{\by,R,\ngrande,\npiccola}(\bx,t)$ for all initial configurations $\bx$ (arbitrary or coinciding with $\by_{B(0,R)}$).
\medbreak

\textbf{Step 2: Reversible approximations and consistent initial conditions.}
In order for the finite system to converge towards a solution of the infinite-dimensional SDE~\eqref{eq:SDE2infty}, two properties are needed: (1) Being at equilibrium, i.e. time-reversible, to obtain the estimates in total variation needed for the convergence; (2) Having consistent initial configurations, coming from the restrictions of the same infinite $\by$ to $B(0,R)$ as we let $R$ tend to infinity.
    
Since the reversible law of each finite system depends on the specific $R$, it is clear that the two properties, as stated, cannot hold at the same time. In Section~\ref{sec:mixture}, we construct the reversible solution of~\eqref{eq:SDEnSpheres} whose law $Q^\by_R$ has as marginal at each instant the equilibrium law $\mu^\by_R$ of the finite-dimensional system penalised on $B(0,R)$. We will compare it with solutions $\bX^{\by,R}$ of~\eqref{eq:SDEnSpheres} with starting configuration $\by_{B(0,R)}$ that, integrated over $\mu(d\by)$, asymptotically yield a reversible solution of~\eqref{eq:SDE2infty}, and finally show, in Step 4 below, that the two solutions coincide as $R\to\infty$.
\medbreak

\textbf{Step 3: Path-by-path convergence on a subset.}
In Section~\ref{sec:ball-separation}, we fix an initial configuration $\by$ and $\omega\in\Omega$, hence fixing a realisation of all the Brownian motions. We consider the solutions $\bX^{\by,R}$ and $\bX^{\by,R+1}$ of the finite-dimensional dynamics in $B(0,R)$ and $B(0,R+1)$, respectively. We show that, if $\omega$ does not induce very long chains or very fast balls, when $R$ is large enough, the trajectories of balls close to the origin do not feel the penalisation and coincide under the two penalised processes. This yields, for the trajectory of each sphere or particle, a stationary (thus converging) sequence. These trajectories are then solutions to~\eqref{eq:SDEnSpheres} without penalisation, hence they solve~\eqref{eq:SDE2infty}. We call $\Omega^\by$ the subset of $\Omega$ of such good realisations, and note that it depends on the choice of several parameters, cf.~\eqref{eq:defOmegay}, which play a central role in the next step.
\medbreak

\textbf{Step 4: Pathologies are negligible.}
In Section~\ref{sec:almost-sure}, we show that the subset $\Omega^\by\subset\Omega$ on which we have the convergence is of full mass for $\mu$-almost every initial configuration $\by$. Since $\omega\notin\Omega^\by$ means that the previously mentioned pathologies are present for an infinite number of $R$'s, thanks to the Borel--Cantelli lemma, this reduces to showing convergence of a series. The only difficulty amounts then to a careful choice of the afore-mentioned parameters.

Under the reversible laws $Q^\by_R$, it turns out that chains longer than $\kappa(R):=\floor{R^{1/3}}$ of spheres, in or around $B(0,R)$, closer than $\eps$ from each other at times multiples of $\delta(R):=1/\floor{R^{1/3}}$ are exponentially unlikely as $R$ increases. Similarly for the probability that a ball going through $B(0,R)$ moves by more than $\eps/4$ in the time interval of length $\delta(R)$. This informs the choice of the parameters in $\Omega^\by$. The penalisation is then strong enough for the total variation distance between the laws $Q^\by_R$ and the law of $\bX^{\by,R}$ with $\by\sim\mu$ to converge at summable speed, completing the proof.

\begin{remark}
    Note that the above proof does not depend on the choice of intensity for the reference Poisson point process, cf.~\eqref{eq:mu}, and hence our result holds for all values $\zgrande,\zpiccola>0$.
\end{remark}

%>>>>>>>>>>>>>>>>>>>>>>>>>>>>>>>>>>>>>>>>>>>>>>>>>>>>>>>>>>>>>>>>>>>>>>>>>>>>>>>
\section{Construction of the approximating process}\label{sec:construct}
%>>>>>>>>>>>>>>>>>>>>>>>>>>>>>>>>>>>>>>>>>>>>>>>>>>>>>>>>>>>>>>>>>>>>>>>>>>>>>>>

Let $\mu\in\Gcal_{\zgrande,\zpiccola}$, $\zgrande,\zpiccola>0$. In this section, for $\mu$-almost every initial condition $\by\in\D$, we construct, via a penalisation procedure, a sequence of approximating processes that start from an approximation $\mu^\by_R$ of the candidate reversible measure for~\eqref{eq:SDE2infty}, and that mostly stay in $B(0,R)$.
The solution of~\eqref{eq:SDE2infty} will then be shown, in Section~\ref{sec:convergence}, to be the limit as $R\to\infty$ of such a sequence.

\subsection{Penalisation functions}\label{sebsect:penalisationFunctions} %%%%%%%%%%%%%%%%%%%%%%

For every fixed radius $R>0$ and outside configuration $\by=\bgrandey\bpiccolay\in\D$, we define $\BallGyR$ (resp. $\BallPyR$) as the part of $B(0,R)$ in which the centre of a sphere (resp. a particle) can be put without conflicting with the $\by$ spheres and particles whose centres are outside of $B(0,R)$, that is:
\begin{equation}\label{eq:BallGyR_BallPyR} 
\begin{split}
   &\BallGyR := B(0,R) \setminus B(\bgrandey_{B(0,R)^c}, 2\rgrande)
                      \setminus B( \bpiccolay_{B(0,R)^c} , \rdep), \\
   &\BallPyR := B(0,R) \setminus \BGrande(\bgrandey_{B(0,R)^c}).
\end{split}
\end{equation}
The sphere-penalisation function $\psiGyR : \R^d \to \R$ 
and the particle-penalisation function $\psiPyR : \R^d \to \R$ 
are chosen among non-negative functions of class $\mathcal{C}^2$ with bounded derivatives, 
such that $\nabla\psiGyR$ vanishes on $\BallGyR$, $\nabla\psiPyR$ vanishes on $\BallPyR$,
and they are small enough outside of $\BallGyR$ and $\BallPyR$ respectively, in the sense that:
\begin{equation} \label{eq:psiGR}
\begin{split}
    \sum_{R=1}^\infty \int_{(\BallGyR)^c} e^{-\psiGyR(x)} \, dx <+\infty
  \quad\text{ and }\quad \sum_{R=1}^\infty \int_{(\BallPyR)^c} e^{-\psiPyR(x)} \, dx <+\infty.
\end{split}
\end{equation}
That is, for the finite measures on $\R^d$ defined as
$\lambdaGyR(d\grande) := e^{-\psiGyR(\grande)} \,d\grande$, 
and $\lambdaPyR(d\piccola) := e^{-\psiPyR(\piccola)} \,d\piccola$,
where $dx$ denotes the Lebesgue measure, we assume that 
$\sum_{R=1}^\infty ~ \lambdaGyR((\BallGyR)^c) + \lambdaPyR((\BallPyR)^c) ~ <+\infty.$
An example of such functions $\psiGyR$ and $\psiPyR$\,, having linear growth with respect to the Euclidean norm at infinity, is given in Section~\ref{app:psigrandepsipiccola}.

These penalisation functions will be used as drifts in a finite-dimensional penalised dynamics whose solution will be shown to approximate the solution of~\eqref{eq:SDE2infty}. 

\subsection{Penalised SDE} %%%%%%%%%%%%%%%%%%%%%%%%%%%%%%%%%%%%%%%%%%%%%%

We consider the penalised SDE around $B(0,R)$ for a finite number $\ngrande$ of spheres and a finite number $\npiccola$ of particles given by
\begin{equation}\label{eq:SDEnSpheres} \tag{${\mathcal{S}^\by_{R}}$}
\begin{cases}
\text{for any } i\in\{1,\ldots,\ngrande\}, \, k\in\{1,\ldots,\npiccola\}, \, t\in[0,1], \\ \disp  
 \Grande_i(t) = \Grande_i(0) +\WGrande_i(t) -\frac12 \int_0^t \nabla\psiGyR\big(\Grande_i(s)\big) \,ds \\ 
 \disp \phantom{ \Grande_i(t) = \Grande_i(0) +\WGrande_i(t) }
                +\sum_{j=1}^\ngrande \int_0^t \big(\Grande_i-\Grande_{j}\big)(s) dL_{ij}(s) 
                +\sum_{k=1}^{\npiccola} \int_0^t \big(\Grande_i-\Piccola_{k}\big)(s) d\ell_{ik}(s) \\ 
                \disp 
 \Piccola_k(t) = \Piccola_k(0) +\sigmaP\,\WPiccola_k(t) 
                 -\frac{\sigmaP^2}{2} \int_0^t \nabla\psiPyR\big(\Piccola_k(s)\big) \,ds
                +\sigmaP^2 \sum_{i=1}^\ngrande \int_0^t \big(\Piccola_k-\Grande_{i}\big)(s)  d\ell_{ik}(s) \\ \disp
    \text{for any } j\in\{1,\ldots,\ngrande\},\ L_{ij}(0)=\ell_{ik}(0)=0, \ L_{ij}\equiv L_{ji}, \\ 
    \disp 
    \int_0^t \un_{|\Grande_i(s)-\Grande_j(s)|\neq2 \,\rgrande} \ dL_{ij}(s)=0, \
    L_{ii}\equiv 0,\\ 
  \disp\int_0^t \un_{|\Grande_i (s)-\Piccola_k(s)|\neq\rdep} \, d \ell_{ik}(s)=0 .
\end{cases}
\end{equation}
The existence of a unique solution to the above system follows from~\cite[Proposition 2.2]{FKRZ24}:
\begin{proposition} \label{prop:ExistenceSolRevSnmR}
For any fixed outside condition $\by\in\D$, any fixed radius $R>0$, and any number $\ngrande\in\N$ of large spheres and $\npiccola\in\N$ of small particles,
the penalised finite-dimensional SDE with reflection~\eqref{eq:SDEnSpheres} admits a unique $\D$-valued strong solution
$\bX^{\by,R,\ngrande,\npiccola}(\bx) = \big(\bX^{\by,R,\ngrande,\npiccola}(\bx,t),\, t\in [0,1]\big)$,
\begin{equation*}
\begin{split}
     &\bX^{\by,R,\ngrande,\npiccola}(\bx,t) :=
    \big( \Grande_i^{\by,R,\ngrande,\npiccola}(\bx,t), \Piccola_k^{\by,R,\ngrande,\npiccola}(\bx,t),  L_{ij}^{\by,R,\ngrande,\npiccola}(\bx,t), \ell_{ik}^{\by,R,\ngrande,\npiccola}(\bx,t) \big)_{1\le i,j \le \ngrande,\, 1\le k \le \npiccola},
\end{split}
\end{equation*}
for $\nu^\by_{R,\ngrande,\npiccola}$-almost every deterministic initial condition\\ $\bx=\bgrande\bpiccola= (\grande_1,\ldots,\grande_n,\piccola_1,\ldots,\piccola_\npiccola) \in \D$, 
where the measure $\nu^\by_{R,\ngrande,\npiccola}$, concentrated on the admissible configurations with $\ngrande$ spheres and $\npiccola$ particles, is defined, for any positive measurable function $F$ on $\D$, by 
\begin{equation} \label{eq:nu_yRn}
\begin{split}
    &\int_\D F(\bx) \, d\nu^\by_{R,\ngrande,\npiccola}(\bx):= \int_{\R^{\ngrande d}} \int_{\R^{\npiccola d}} 
       F(\bgrande\bpiccola) \, \un_{\D}(\bgrande\bpiccola) \, 
       \otimes_{k=1}^\npiccola \lambdaPyR(d\piccola_k) \otimes_{i=1}^\ngrande \lambdaGyR(d\grande_i).
\end{split}
\end{equation}
Moreover, the finite measure $\nu^\by_{R,\ngrande,\npiccola}$ 
% and the associated probability measure $\frac{1}{\Zmixing^\by_{R,\ngrande,\npiccola}} \nu^\by_{R,\ngrande,\npiccola}$ 
is reversible for the dynamics~\eqref{eq:SDEnSpheres}.
\end{proposition}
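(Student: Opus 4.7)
The plan is to regard the penalised finite-dimensional reflected SDE as an instance of the two-type hard-core diffusion constructed in~\cite[Proposition 2.2]{FKRZ24}: the only substantive change is the explicit form of the drift, which here is $-\tfrac{1}{2}\nabla\psiGyR$ on the sphere coordinates and $-\tfrac{\sigmaP^2}{2}\nabla\psiPyR$ on the particle coordinates, and since the penalisation functions of Section~\ref{sebsect:penalisationFunctions} are $\mathcal{C}^2$ with bounded derivatives these drifts are globally Lipschitz; the argument of~\cite{FKRZ24} then transfers verbatim. For transparency, I would organise the proof in two independent parts.

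For strong existence and uniqueness, I view the system as a reflected diffusion in $\R^{(\ngrande+\npiccola)d}$ on the admissible domain
\[
\D_{\ngrande,\npiccola}:=\big\{(\grande_1,\dots,\grande_\ngrande,\piccola_1,\dots,\piccola_\npiccola)\in\R^{(\ngrande+\npiccola)d}:\ |\grande_i-\grande_j|\ge 2\rgrande,\ |\grande_i-\piccola_k|\ge\rdep\big\},
\]
which is a finite intersection of complements of open balls (in the relevant two-coordinate slices) and hence satisfies both the uniform exterior sphere condition and the interior cone condition. With Lipschitz drift and a constant diffusion matrix, the theorem of~\cite{SaishoSolEDS}, or equivalently the multiple-constraint setting of~\cite{MultipleConstraint}, produces a pathwise-unique strong solution for every deterministic initial condition in $\D_{\ngrande,\npiccola}$, with local-time reflection normal to each active constraint.

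For reversibility of $\nu^\by_{R,\ngrande,\npiccola}$, I use that this measure has Lebesgue density $\prod_{i=1}^\ngrande e^{-\psiGyR(\grande_i)}\prod_{k=1}^\npiccola e^{-\psiPyR(\piccola_k)}\un_{\D_{\ngrande,\npiccola}}$ and that the penalised SDE is exactly the normally-reflected Langevin dynamics with potential $U=\sum_{i=1}^\ngrande\psiGyR(\grande_i)+\sum_{k=1}^\npiccola\psiPyR(\piccola_k)$ and anisotropic diffusion matrix $\tfrac{1}{2}\mathrm{Id}_{\ngrande d}\oplus\tfrac{\sigmaP^2}{2}\mathrm{Id}_{\npiccola d}$. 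The symmetric bilinear form
\[
Q(F,G):=\tfrac{1}{2}\sum_{i=1}^\ngrande\int\nabla_{\grande_i}F\cdot\nabla_{\grande_i}G\,d\nu^\by_{R,\ngrande,\npiccola}+\tfrac{\sigmaP^2}{2}\sum_{k=1}^\npiccola\int\nabla_{\piccola_k}F\cdot\nabla_{\piccola_k}G\,d\nu^\by_{R,\ngrande,\npiccola},
\]
acting on smooth test functions with the Neumann-type condition along the reflection directions, is closable, and the integration by parts $\int(\mathcal{L}F)G\,d\nu^\by_{R,\ngrande,\npiccola}=-Q(F,G)$ holds because the boundary contributions vanish precisely when the reflection is normal. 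Symmetry of the generator $\mathcal{L}$ in $L^2(\nu^\by_{R,\ngrande,\npiccola})$ is equivalent to reversibility.

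The only non-routine point, and the reason~\cite{FKRZ24} required care, is the behaviour of the reflected process on the corner set where several constraints become active simultaneously (triple collisions, or a sphere--sphere contact coinciding with a sphere--particle contact): there the reflection direction is defined only as an element of the normal cone. One must verify pathwise that the set of such times is Lebesgue-negligible along the solution, so that Skorokhod's equation admits a single well-defined pathwise solution and no extra boundary term spoils the integration by parts above. The finite-constraint geometry of~\cite{MultipleConstraint} combined with Saisho's exterior sphere condition controls exactly this phenomenon, and the argument of~\cite{FKRZ24} transports to the present penalised setting without further modification.
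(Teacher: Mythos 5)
Your proposal follows the same route as the paper, which establishes Proposition~\ref{prop:ExistenceSolRevSnmR} directly as an instance of \cite[Proposition 2.2]{FKRZ24}; the paper itself gives no further argument. Your unpacking of that citation---Saisho/multiple-constraint reflection for strong existence and uniqueness, and the symmetric Dirichlet form with the correctly anisotropic metric $\tfrac12\mathrm{Id}_{\ngrande d}\oplus\tfrac{\sigmaP^2}{2}\mathrm{Id}_{\npiccola d}$ for reversibility (which is exactly what makes the $\sigmaP^2$-weighted particle reflection term normal in the right metric)---is a faithful account of the mechanism underlying the cited result.
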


Note that $\nu^\by_{R,\ngrande,\npiccola}$ only depends on $\by$ via $\psiGyR$ and $\psiPyR$, which only depend on $\by_{B(0,R)^c}$. 
% It does not depend on $\bgrandey_{B(0,R)}$ nor $\bpiccolay_{B(0,R)}$. 
The initial configurations $\bx=\bgrande\bpiccola$ given by $\nu^\by_{R,\ngrande,\npiccola}$ belong to $\D$, but the superposition $\by_{B(0,R)^c}\bgrande\bpiccola$ is in general not in $\D$.

\begin{remark}
%[On the cases where there is no particle or no sphere]
For $\ngrande=0$, there is no sphere process and no local time process $L_{ij}$ and $\ell_{ik}$;
the process $\bX^{\by,R,0,\npiccola}(\bx,\cdot) = \big( \Piccola_k^{\by,R,0,\npiccola}(\bx,\cdot)\big)_{1\le k \le\npiccola} $ only involves $\npiccola$ independent particles. 
Then~\eqref{eq:nu_yRn} holds with the convention that $\bgrande=\emptyset$, 
$\int_{\R^{0}}  \const \otimes_{i=1}^0 \lambdaGyR(d\grande_i) = \const$, and $\int F(\bx) \, d\nu^\by_{R,0,\npiccola}(\bx) 
    := \int_{\R^{\npiccola d}} F(\bpiccola) \, 
       \otimes_{k=1}^\npiccola \lambdaPyR(d\piccola_k)$.
       
Similarly, for $\npiccola=0$, there is no particle process and no local time process $\ell_{ik}$; the process $\bX^{\by,R,\ngrande,0}(\bx,t) = \big( \Grande^{\by,R,\ngrande,0}(\bx,\cdot), L_{ij}^{\by,R,\ngrande,0}(\bx,\cdot) \big)_{ 1\le i,j \le \ngrande} $ only involves $\ngrande$ hard spheres. 
Then~\eqref{eq:nu_yRn} holds with the convention that $\bpiccola=\emptyset$ and 
$\int F(\bx) \, d\nu^\by_{R,\ngrande,0}(\bx) 
    := \int_{\R^{\ngrande d}} 
       F(\bgrande) \, \un_{\D}(\bgrande) \, \otimes_{i=1}^\ngrande \lambdaGyR(d\grande_i)$.
       
For $\ngrande=0$ and $\npiccola=0$, the process is empty. For the sake of completeness, in the sequel we let $\int F(\emptyset) \, d\nu^\by_{R,0,0}(\bx) := F(\emptyset)$.
\end{remark}

\begin{definition} 
Let $Q^\by_{R,\ngrande,\npiccola}$ denote the time-reversible pushforward measure of the solution of the penalised SDE~\eqref{eq:SDEnSpheres} with initial measure $\nu^\by_{R,\ngrande,\npiccola}$:
\begin{equation} \label{eq:Q_yRn}
    Q^\by_{R,\ngrande,\npiccola}\left( ~\cdot~ \right) 
    := \int P\left( \big( \bX^{\by,R,\ngrande,\npiccola}(\bx,t) \big)_{t\in[0,1]} \in~\cdot~ \right) \, d\nu^\by_{R,\ngrande,\npiccola}(\bx). 
\end{equation}
\end{definition} 

\subsection{Mixture of penalised processes} \label{sec:mixture}
%%%%%%%%%%%%%%%%%%%%%%%%%%%%%%%%%%%%%%%%%%%%%%

We construct an approximation of the expected reversible measure $\mu$ as a Poissonian mixture of the reversible measures of the solutions of~\eqref{eq:SDEnSpheres} for different numbers of spheres and particles:
\begin{equation}\label{eq:muRy}
\mu^\by_R := \frac{1}{\Zmixing^\by_R} 
             \sum_{\ngrande=0}^{+\infty} \frac{\zgrande^\ngrande}{\ngrande!} \,                
             \sum_{\npiccola=0}^{+\infty} \frac{\zpiccola^\npiccola}{\npiccola!} \, \nu^\by_{R,\ngrande,\npiccola}.
\end{equation} 
For any $\by$, the following mixture $Q^\by_R$ of penalised processes distributions starting from $\mu^\by_R$ is reversible:
\begin{equation} \label{eq:QRy}
Q^\by_R := \frac{1}{\Zmixing^\by_R} 
           \sum_{\ngrande=0}^{+\infty} \frac{\zgrande^\ngrande}{\ngrande!} \,                
           \sum_{\npiccola=0}^{+\infty} \frac{\zpiccola^\npiccola}{\npiccola!} \, Q^\by_{R,\ngrande,\npiccola}.
\end{equation} 
Finally, we define the approximate process $X^{\by,R}$ as the solution of~\eqref{eq:SDEnSpheres} with $\ngrande=\sharp(\bgrandey_{B(0,R)})$ and $\npiccola=\sharp(\bpiccolay_{B(0,R)})$, where $\sharp$ denotes the number of points, and with initial configuration equal to $\bgrandey_{B(0,R)}\bpiccolay_{B(0,R)}$, that is,
\begin{equation}\label{eq:XyR}
    X^{\by,R}(\cdot) := \bX^{\by,R,\sharp(\bgrandey_{B(0,R)}),\sharp(\bpiccolay_{B(0,R)})}(\by_{B(0,R)},~\cdot~).
\end{equation}
To obtain Theorem \ref{theorem:SDE2infty}, we will prove that, as $R$ tends to infinity, the $Q^\by_R$-distributed reversible process converges, in a locally eventually constant way, to a solution of~\eqref{eq:SDE2infty}, and the distribution of $X^{\by,R}$ is arbitrary close to $Q^\by_R$ when the outside configuration $\by$ is chosen according to $\mu$.

\section{Convergence of the penalised processes}\label{sec:convergence}
%%%%%%%%%%%%%%%%%%%%%%%%%%%%%%%%%%%%%%%%%%%%%%

In this section, we show that the penalised process $X^{\by,R}$ converges to the solution of~\eqref{eq:SDE2infty}: we first prove, in Section~\ref{sec:ball-separation}, that the convergence holds in a certain subset of $\Omega$, and then, in Section~\ref{sec:almost-sure}, show that this subset has full mass under the reversible measures. Uniqueness is shown in Remark~\ref{rmk:uniqueness}.
\medbreak

In order to prove the convergence, we have to check that balls (spheres or particles) only interact locally, making sure that the probability of \emph{bad paths}, i.e.\ those of balls that interact with a great number of other ones, vanishes.
The proof relies on the fact that \emph{long chains} of interacting spheres rarely form, and that balls coming \emph{very fast} from far away also are improbable.
Let us first define these two events. See Figure~\ref{fig:chain} for a visualisation of a long chain. 
\begin{figure}
    \centering
    \includegraphics[width=0.8\linewidth]{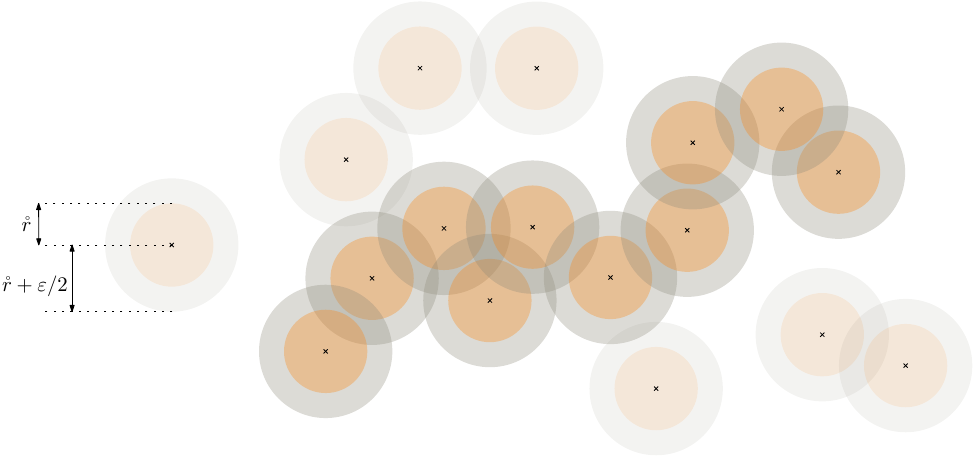}
    \caption{Highlighted, an example of an $\eps$-chain of length $\kappa = 9$.}
    \label{fig:chain}
\end{figure}
\begin{definition}[Long chains]\label{def:defChain}
For every positive radius $\alpha$, every positive leeway $\eps$ and every chain length $\kappa\in\N^*$, define the set of configurations which have an $\epsilon$-chain of spheres of length $\kappa$, with one end in $B(0,\alpha)$, as
\begin{align*} 
    \BadPath_{\text{Chain}}&(\alpha,\kappa,\eps)
    := \Bigg\{ \bx \in \D 
                \text{ s.t. for some } \{ \grande_{j_1},\cdots, \grande_{j_{\kappa+1}} \} \subset \bgrande, \\
                & |\grande_{j_1}|\le\alpha \text{ and } 
                |\grande_{j_1}-\grande_{j_2}|<2\rgrande+\eps,~ 
                \ldots ,~
                |\grande_{j_{\kappa}}-\grande_{j_{\kappa+1}}|<2\rgrande+\eps 
       \Bigg\} .
\end{align*}
For every positive time step $\delta$, the set of configuration paths whose $\delta$-discretisation presents an $\eps$-chain is
\begin{align*}
    &\BadPath'_{\text{Chain}}(\delta,\alpha,\kappa,\eps):= \Bigg\{ \bX\colon [0,1]\to \D 
                 \text{ s.t. } \exists k\in\N : \bX(k\delta)\in\BadPath_{\text{Chain}}(\alpha,\kappa,\eps)
       \Bigg\} .
\end{align*} 
\end{definition} 
\begin{definition}[Fast balls]\label{def:defFast}
For every positive radius $\alpha$, every positive range $\eps$, and every positive time duration $\delta$, define the set of paths which have oscillation $\eps$ during a time interval smaller than $\delta$, and are at some time point in $B(0,\alpha)$, as
\begin{equation*}% \label{eq:defFast}
\begin{split}
    &\BadPath_{\text{Fast}}(\alpha,\delta,\eps)\\
    &:= \bigg\{ f\in \mathcal{C}^0\big([0,1];\R^d\big) \text{ s.t. } 
               \min_{0\le s \le1}|f(s)| \leq \alpha
    \text{  and } 
               \sup_{\substack{|t-s|<\delta \\ 0\le s,t \le1}} |f(t)-f(s)| > \eps \bigg\} .  
\end{split}
\end{equation*} 
The set of configuration paths involving a sphere or particle that moves too fast is defined as
\begin{equation*}%\label{eq:defOmegaFast}
\begin{split}
    &\BadPath'_{\text{Fast}}(\alpha,\delta,\eps)\\
    &:= \bigg\{ \bX\colon [0,1]\to \D 
            \text{ s.t. } \exists i: 
            \Grande_i(\cdot)\in\BadPath_{\text{Fast}}(\alpha,\delta,\eps)
            \text{ or }
            \exists k:
            \Piccola_k(\cdot)\in\BadPath_{\text{Fast}}(\alpha,\delta,\eps) \bigg\} .
\end{split}
\end{equation*}
\end{definition}
We will show that the approximate processes $X^{\by,R}$ coincide for $R$ large enough, as far as their spheres and particles in some bounded area are concerned. 
This happens as soon as, for any $\eps$ smaller than some positive threshold $\eps(\by)$ 
and for all ranges $R$ large enough, both processes $X^{\by,R}$ and $X^{\by,R+1}$ do not belong to the $\BadPath'_{\text{Chain}}$ and $\BadPath'_{\text{Fast}}$ events given by $\eps$ and some suitable parameters $\delta$, $\alpha$, and $\kappa$ depending on $R$.
Formally, the set on which the convergence holds, and which will be shown to be of full measure, is:
\begin{equation} \label{eq:defOmegay}
\begin{split}
    \Omega^\by := \bigg\{ 
    & \omega\in\Omega : 
    \exists\eps(\by)>0 \ \forall\eps<\eps(\by) \ 
    \exists R(\eps)\in\N \text{ s.t. } \forall R\ge R(\eps),   \\
    & X^{\by,R}(\omega) \notin \BadPath'_{\text{Chain}}(\delta(R),\alpha(R),\kappa(R),\eps), \ X^{\by,R} \notin \BadPath'_{\text{Fast}}(\alpha(R),\delta(R),\eps/4), \\
    & X^{\by,R+1}(\omega) \notin \BadPath'_{\text{Chain}}(\delta(R),\alpha(R),\kappa(R),\eps),\ X^{\by,R+1} \notin \BadPath'_{\text{Fast}}(\alpha(R),\delta(R),\eps/4)
                \bigg\},
\end{split}
\end{equation} 
where 
$\delta(R):=1/\floor{R^{1/3}}$, $\alpha(R):=R-2\rgrande$ and $\kappa(R):=\floor{R^{1/3}}$.

The construction of a solution to~\eqref{eq:SDE2infty} is then split into the two following propositions, proved in Sections~\ref{sec:ball-separation} and~\ref{sec:almost-sure} respectively, and a short Section~\ref{sec:reversibility} to check the reversibility of the limit process obtained in these propositions.

\begin{proposition}\label{prop:Convergence_y}
    For any fixed $\by$ in the interior of $\D$, for any $\omega\in\Omega^\by$, the sequence of processes $\big(X^{\by,R}(\omega),\omega\in\Omega^\by\big)_{R\in\N^*}$ converges to a limit process that solves~\eqref{eq:SDE2infty} with initial condition $\by$.
\end{proposition}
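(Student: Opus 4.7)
The plan is to exploit the defining property of $\Omega^\by$: on this event, the penalised dynamics $X^{\by,R}$ and $X^{\by,R+1}$ cannot differ substantially on a bounded space–time region, because any discrepancy would require either a long interaction chain or a fast-moving ball, both of which are ruled out. I would couple the driving Brownian motions across $R$ in the natural way: balls already present at level $R$ reuse the same drivers at level $R+1$, while the extra initial points in $B(0,R+1)\setminus B(0,R)$ get independent drivers. The limit process is then constructed by pathwise eventual constancy in $R$.

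\emph{Step 1 (coincidence of $X^{\by,R}$ and $X^{\by,R+1}$ on bounded windows).} Fix a bounded $\Lambda\subset\R^d$ and $\eps<\eps(\by)$. Two simple estimates are available on $\Omega^\by$: (i) at each grid time $k\delta(R)$, any ``close-neighbour'' chain of spheres starting from $B(0,\alpha(R))$ has length at most $\kappa(R)$, hence spatial extent at most $\kappa(R)(2\rgrande+\eps)$; (ii) the total displacement on $[0,1]$ of any sphere or particle passing through $B(0,\alpha(R))$ is bounded by $\kappa(R)\cdot\eps/4$, since successive $\delta(R)$-increments are bounded by $\eps/4$. Both quantities are of order $R^{1/3}$, much smaller than $R$, so for $R$ large no ball initially in $B(0,R+1)\setminus B(0,R)$ can reach the ``region of influence'' of $\Lambda$ before time $1$. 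Moreover, between consecutive grid points, two spheres at distance $\ge 2\rgrande+\eps$ at time $k\delta(R)$ cannot collide in $(k\delta(R),(k+1)\delta(R))$, since their mutual displacement is at most $\eps/2<\eps$. Consequently, the set of balls that interact with $\Lambda$ during $[0,1]$ lies entirely in $B(0,R)$, and is the same in both processes. A short induction over the grid $\{k\delta(R)\}$ then propagates pathwise coincidence on $\Lambda\times[0,1]$.

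\emph{Step 2 (construction and verification of the limit).} By Step 1, the trajectories $\Grande_i^{\by,R}(\cdot,\omega)$, $\Piccola_k^{\by,R}(\cdot,\omega)$ and their local times $L_{ij}^{\by,R}, \ell_{ik}^{\by,R}$ are eventually constant in $R$ for each fixed index; denote the limits $\Grande_i$, $\Piccola_k$, $L_{ij}$, $\ell_{ik}$. These take values in $\D$ because each $X^{\by,R}$ does and $\D$ is closed. The penalisation drifts $-\tfrac12\nabla\psiGyR$ and $-\tfrac{\sigmaP^2}{2}\nabla\psiPyR$ vanish on $\BallGyR$ and $\BallPyR$ respectively; since every limiting trajectory remains in a bounded region and $\BallGyR,\BallPyR$ exhaust such regions as $R\to\infty$, these drift terms disappear in the limit. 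The Brownian drivers, the collision local times, and the non-overlap constraints all pass through termwise, and the initial condition is $\by$ by construction, so the limit solves~\eqref{eq:SDE2infty}.

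\emph{Main obstacle.} The delicate point is Step 1: turning the qualitative no-chain and no-fast-ball statements into a propagating coincidence argument on the discrete grid. The scalings $\delta(R)=1/\lfloor R^{1/3}\rfloor$, $\kappa(R)=\lfloor R^{1/3}\rfloor$, $\alpha(R)=R-2\rgrande$ are calibrated precisely so that both the chain's reach $\kappa(R)(2\rgrande+\eps)$ and the maximal single-ball displacement $\kappa(R)\cdot\eps/4$ remain $o(R)$; any weaker scaling would let balls from the outer annulus propagate inward and destroy the coincidence. A secondary subtlety is the comparison of the two distinct penalisations $\psiGyR$ and $\psiG^\by_{R+1}$, which is bypassed by noting that both vanish deep inside $B(0,R)$, where the fast-ball bound confines all relevant trajectories.
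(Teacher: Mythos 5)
Your overall strategy matches the paper's: an induction over the $\delta(R)$-grid, fuelled by the no-chain and no-fast-ball conditions, that forces $X^{\by,R}$ and $X^{\by,R+1}$ to coincide on shrinking spatial windows, followed by a pathwise ``eventually constant'' construction of the limit. The scaling observation ($\kappa/\delta\sim R^{2/3}\ll\alpha\sim R$) and the remark that the penalisation gradients vanish deep inside $B(0,R)$ are exactly the ingredients the paper uses.

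However, Step~1 as written leaves a genuine gap. The sentence ``the set of balls that interact with $\Lambda$ during $[0,1]$ lies entirely in $B(0,R)$, and is the same in both processes'' assumes this set is well-defined a priori; but which balls ``interact with $\Lambda$'' is a time-dependent, path-dependent notion that could a priori \emph{grow} as new contacts form. The danger is precisely an $R+1$-only ball colliding with another ball, which then collides with another, propagating inward faster than your displacement bound would suggest on its own, because a collision transmits influence over one diameter $2\rgrande$ instantaneously. The paper handles this by introducing, at each grid time $a\delta$, the index sets $\Igrande(X(a\delta),\rho_a,\eps)$ and $\Ipiccola(X(a\delta),\rho_a,\eps)$ with a \emph{decreasing} sequence of radii $\rho_a = \rho + 2\big(\tfrac1\delta-a\big)\kappa(2\rgrande+\eps)$ (plus the $\eps/\delta$ buffer), and proving three properties in Lemma~\ref{LemmaNestedInclusion}: (i) separation --- indexed balls do not bump into non-indexed ones during $[a\delta,(a+1)\delta]$; (ii) localisation --- indexed balls stay within a controlled radius; and (iii) \emph{nested inclusion} --- $\Igrande(X((a+1)\delta),\rho_{a+1},\eps)\subset \Igrande(X(a\delta),\rho_a,\eps)$, so the relevant index set can only shrink. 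Only with (iii) does the induction close: coincidence of the two processes at time $a\delta$ on the radius-$\rho_a$ index set, restriction of both SDEs on $[a\delta,(a+1)\delta]$ to a closed finite system via (i), uniqueness of the finite SDE, then nested inclusion to pass to the smaller radius-$\rho_{a+1}$ index set. Without this explicit mechanism --- and in particular without the decreasing radii, which sacrifice precisely $2\kappa(2\rgrande+\eps)$ per step --- your ``short induction'' does not actually propagate, because the set of balls that could be relevant is not monotone in the naïve formulation. Making the induction rigorous would require you to reconstruct Lemma~\ref{LemmaNestedInclusion} (or its equivalent); as written, Step~1 asserts the conclusion of that lemma without supplying its proof.

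A secondary but minor point: your displacement bound ``total displacement $\le \kappa(R)\cdot\eps/4$'' is the right order of magnitude, but the fast-ball event controls oscillation over windows of length $\delta(R)$, so the path-total bound is $(1/\delta(R))\cdot(\eps/4)$, which the paper records as at most $\eps/\delta(R)$ in its choice of $\rho_0=\rho+\eps/\delta(R)+2\kappa(R)(2\rgrande+\eps)/\delta(R)$; keeping the $\eps/\delta(R)$ margin explicit matters when checking~\eqref{eq:ControlOnTheRhos}.
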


\begin{proposition}\label{prop:FullMeasure}
Any Gibbs measure $\mu\in\Gcal_{\zgrande,\zpiccola}$ has its support in $\left\{ \by\in\D \text{ s.t. } P(\Omega^\by)=1 \right\}$, that is $\int_\D P((\Omega^\by)) \mu(d\by) = 1$.
\end{proposition}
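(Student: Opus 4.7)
The plan is to apply Borel--Cantelli on the product space $\M\times\Omega$ under $\mu\otimes P$. Since the chain bad event $\BadPath'_{\text{Chain}}(\delta,\alpha,\kappa,\eps)$ is monotone increasing in $\eps$ while the fast bad event $\BadPath'_{\text{Fast}}(\alpha,\delta,\eps/4)$ is monotone decreasing in $\eps$, it suffices to prove, for each $\eps_k=1/k$ in a countable family with $k\ge k_0$ depending only on $\zgrande,\rgrande,d$, summability in $R$ of the $\mu\otimes P$-probabilities of both bad event types at threshold $\eps_k$ (for $X^{\by,R}$ and $X^{\by,R+1}$). Borel--Cantelli and a countable intersection, followed by monotonicity on the dyadic intervals $(1/(k+1),1/k]$, then produce a deterministic threshold $\eps(\by)=1/k_0$ below which both bad event types occur finitely often, giving $\mu\otimes P(\Omega^\by)=1$; this is the desired identity by Fubini.

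The first step would be to replace the deterministic initial condition $\by_{B(0,R)}$ of $X^{\by,R}$ by the stationary initial measure $\mu^\by_R$ of the reversible process $Q^\by_R$. By the DLR equation~\eqref{eq:mu}, the $\mu$-conditional law of $\by_{B(0,R)}$ given $\by_{B(0,R)^c}$ is the hard-core reweighting of $\pi^\zgrande_{B(0,R)}\otimes\pi^\zpiccola_{B(0,R)}$, which coincides with the restriction of $\mu^\by_R$ to configurations supported in $\BallGyR\cup\BallPyR$ (recall $\psiGyR\equiv 0$ on $\BallGyR$ and $\psiPyR\equiv 0$ on $\BallPyR$). Assumption~\eqref{eq:psiGR} bounds, summably in $R$ and uniformly in $\by$, the $\mu^\by_R$-mass of configurations with a point outside these admissible sets, so up to a summable error one may work with the stationary process $Q^\by_R$ in place of $X^{\by,R}$.

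By stationarity of $Q^\by_R$ (Proposition~\ref{prop:ExistenceSolRevSnmR}) and a union bound over the $\floor{1/\delta(R)}+1$ times of the $\delta(R)$-discretisation, each path-space bad event at range $R$ reduces to a static one under $\mu^\by_R$. For the chain event, integrating out the particles from $\mu^\by_R$ yields, via a Papangelou argument, sphere correlation functions bounded by $\zgrande^{\kappa+1}\un_\D$ (adding a sphere can only enlarge the depletion region, hence lower the effective activity); a Campbell-type computation then constrains each sphere of an $\eps$-chain to a spherical shell of thickness $\eps$ around its predecessor, giving
\[ \mu^\by_R\bigl(\BadPath_{\text{Chain}}(\alpha(R),\kappa(R),\eps)\bigr)\,\le\,\zgrande\,\volball_d\,\alpha(R)^d\,\Bigl(\zgrande\,\volball_d\bigl((2\rgrande+\eps)^d-(2\rgrande)^d\bigr)\Bigr)^{\kappa(R)}. \]
For $\eps$ small enough that the parenthesised quantity is strictly less than $1$, this decays super-exponentially in $\kappa(R)=\floor{R^{1/3}}$, easily dominating the polynomial prefactor $\delta(R)^{-1}\alpha(R)^d\asymp R^{d+1/3}$. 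For the fast-ball event, the standard Brownian oscillation bound $P\bigl(\sup_{|t-s|<\delta}|W(t)-W(s)|>\eps/4\bigr)\le C\delta^{-1}e^{-c\eps^2/\delta}$, summed over balls whose centres visit $B(0,\alpha(R))$ (a number of order $\alpha(R)^d$ in expectation under $\mu^\by_R$), yields a bound of order $R^{d+1/3}e^{-c\eps^2 R^{1/3}}$, again summable; the penalisation drift is bounded on any compact and the reflecting local times can only shorten trajectories, so neither alters this Brownian tail.

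The main technical obstacle is Step 1: quantifying the comparison between the law of $X^{\by,R}$, started from the deterministic $\by_{B(0,R)}$, and that of the stationary process $Q^\by_R$, uniformly in the outside data $\by_{B(0,R)^c}$. The summability assumption~\eqref{eq:psiGR} on the penalisation functions is designed precisely for this. Once this reduction is in place, the chain and fast estimates are clean hard-core Poisson and Brownian calculations, and the scalings $\alpha(R)\sim R$, $\delta(R)^{-1}\sim\kappa(R)\sim R^{1/3}$ of~\eqref{eq:defOmegay} are tuned so that the super-exponential decay in $R^{1/3}$ dominates the polynomial growth $R^{d+1/3}$.
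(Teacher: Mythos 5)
Your overall architecture matches the paper's: Borel--Cantelli, a total-variation comparison between the law of $X^{\by,R}$ started from $\by_{B(0,R)}$ and the stationary mixture $Q^\by_R$ (made summable by assumption~\eqref{eq:psiGR}), followed by chain and fast-ball estimates under $Q^\by_R$. The chain step is fine, up to an extra polynomial factor $\alpha(R)^d$ which is harmless against the $c^{\kappa(R)}=c^{\lfloor R^{1/3}\rfloor}$ decay.

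The fast-ball estimate, however, has a genuine gap. You assert that ``the reflecting local times can only shorten trajectories, so [neither] alters this Brownian tail.'' This is false for a many-ball system: a ball being pushed along a chain of collisions can be displaced much farther than its own driving Brownian motion, and precisely this mechanism is what the bad-chain event is designed to control. Bounding the path oscillation of a reflected diffusion by the oscillation of its driving noise is not a Skorokhod-type fact here; the local-time terms in~\eqref{eq:SDEnSpheres} are not contractive. The paper's resolution is the time-reversal trick: because $Q^\by_R$ is not merely stationary but \emph{time-reversible}, the backward process is again a solution of the same SDE for a new family of Brownian motions, and the identity~\eqref{eq:SumForwardBackward} writes each ball's increment as $\tfrac12$ times the sum of a forward and a backward Brownian increment, with the local-time terms cancelling exactly. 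Only after this step does a Brownian modulus-of-continuity bound (Lemma~\ref{LemmeBrownianEstimate}) apply. Your ``reduces to a static event by stationarity'' step is also inapplicable here: unlike $\BadPath'_{\text{Chain}}$, which is a union over the $\delta$-grid of single-time events, $\BadPath'_{\text{Fast}}$ is a genuinely pathwise event.

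A second, related, issue is the counting of balls. You bound the number of balls that \emph{visit} $B(0,\alpha(R))$ by its expectation $\sim\alpha(R)^d$ under $\mu^\by_R$, but this is circular: balls starting far outside $B(0,\alpha)$ could visit it precisely if they move fast, which is the event you are trying to bound. The paper avoids the circularity by a shell decomposition (radii $\alpha_j = \alpha + \eps/\delta + j/\delta$): a ball starting in the $j$-th shell that enters $B(0,\alpha)$ must, over some $\delta$-interval, cover distance at least $\eps+j$, and the Gaussian tail in $j$ is summed against the polynomial volume factor $|B(0,\alpha_{j+1})|$. Without this decomposition your $R^{d+1/3}e^{-c\eps^2 R^{1/3}}$ bound is not justified.
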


\subsection{Proof of Proposition~\ref{prop:Convergence_y}: convergence on a subset}\label{sec:ball-separation}%%%%%%%%%%%%%%%

\begin{definition} 
For each configuration $\bx\in\D$, each radius $\rho>0$, and each leeway $\eps>0$:
\begin{itemize}
    \item The set of spheres which belong to $B(0,\rho)$ or to an $\eps$-chain connected to $B(0,\rho)$ is:
\begin{align*}
 \Igrande&(\bx,\rho,\eps) 
 := \Big\{ i : |\grande_i|<\rho 
             \text{ or, for some } \{ \grande_{j_1},\cdots, \grande_{j_k} \} \subset \bgrande, ~~~ |\grande_{j_1}|\le\rho, \\
           & |\grande_{j_1}-\grande_{j_2}|<2\rgrande+\eps,\ldots ,~
             |\grande_{j_{k-1}}-\grande_{j_k}|<2\rgrande+\eps  ,~
             |\grande_{j_k}-\grande_i|<2\rgrande+\eps \Big\}.
\end{align*}
    \item The set of particles which either belong to $B(0,\rho)$ or are close to some $\eps$-chain connected to $B(0,\rho)$ is:
\begin{align*}
 \Ipiccola(\bx,\rho,\eps) 
 :=\Big\{ k : |\piccola_k|<\rho  \text{ or } 
          \exists i\in\Igrande(\bx,\rho,\eps) \text{ with } 
          |\piccola_k-\grande_i|\le \rdep+\eps \Big\}.
\end{align*}
\end{itemize}
\end{definition} 

The following crucial lemma, proved in Section~\ref{app:NestedInclusionProof}, states that, if a collection of paths $X$ does not contain neither a very long chain nor a very fast ball, the balls around the origin form an interaction cluster which, for a short but strictly positive amount of time, stays separate from all other balls.
More precisely, suppose that no ball entering the region $B(0,\alpha)$ travels a distance of $\epsilon/4$ in a time of less than $\delta$, and that at no time multiple of $\delta$ does any $\epsilon$-chain of $(\kappa+1)$ spheres or more touch $B(0,\alpha)$. Dividing the time interval $[0,1]$ into intervals of length $\delta$, numbered from $a=0$ to $a=1/\delta-1$ (assumed integer), we then have nested regions $B(0,\rho_a)$, with $\rho_a$ decreasing as $a$ increases, and nested collections $\Igrande(X(a\delta),\rho_a,\eps)$ and $\Ipiccola(X(a\delta),\rho_a,\eps)$ of spheres and particles, respectively, each containing the next one. The balls that, at time $t=a\delta$, are either contained in $B(0,\rho_a)$ or near an $\eps$-chain that is connected to $B(0,\rho_a)$, until $t=(a+1)\delta$, will then move very little away from this region during the next time step and only have collisions with each other, since they will remain too far away from the other balls.

\begin{lemma}\label{LemmaNestedInclusion}
Assume that for a chain size $\kappa\ge2$, a time step $\delta$ (inverse of integer), and a region size $\alpha$, the path $X:[0,1]\to\D$ has neither too large a chain nor to large an oscillation, i.e.
\begin{equation*}
    X\notin \BadPath'_{\text{Chain}}(\delta,\alpha,\kappa,\eps) ~~~\text{ and }~~~
   X\notin \BadPath'_{\text{Fast}}(\alpha,\delta,\eps/4).
\end{equation*}
Fix $\rho$ such that $\rho_0:=\rho+\frac{2\kappa}{\delta}(2\rgrande+\eps) \le \alpha$.

Then, for any $a\in\{ 0,1,2,\cdots,\frac{1}{\delta}-1 \}$, for all $t\in[a\delta,(a+1)\delta]$, and for 
$\rho_a := \rho_0-2a\kappa(2\rgrande+\eps)
         = \rho+2\Big( \frac{1}{\delta}-a \Big)\kappa(2\rgrande+\eps)$:
\begin{itemize}
\item Spheres in $\Igrande(X(a\delta),\rho_a,\eps)$ do not bump into the other balls (separation property): 
    \begin{gather}
        \forall j\notin\Igrande(X(a\delta),\rho_a,\eps),\ 
            \forall i\in\Igrande(X(a\delta),\rho_a,\eps),\quad 
             |\grandeX_j(t)-\grandeX_i(t)| >2\rgrande+\frac{\eps}{2}, \\
        \forall k\notin\Ipiccola(X(a\delta),\rho_a,\eps),\ 
             \forall i\in\Igrande(X(a\delta),\rho_a,\eps),\quad 
             |\piccolaX_k(t)-\grandeX_i(t)| >\rdep+\frac{\eps}{2} .
    \end{gather}
\item Spheres and particles not too far from the origin stay around the origin (localisation property): 
    \begin{gather}
        \forall i\in\Igrande(X(a\delta),\rho_a,\eps),\quad 
              |\grandeX_i(t)| \le \rho_a+ \kappa(2\rgrande+\eps) +\frac{\eps}{4},\\
        \forall k\in\Ipiccola(X(a\delta),\rho_a,\eps), \quad 
         |\piccolaX_k(t)| \le \rho_a + \kappa(2\rgrande+\eps) +\rdep+\frac{5\eps}{4}.
    \end{gather}
\item The index sets form a decreasing sequence (nested inclusion property):
    \begin{gather}
        \Igrande(X(a\delta),\rho_a,\eps)
              ~\supset~ \Igrande(X((a+1)\delta),\rho_{a+1},\eps),\\
        \Ipiccola(X(a\delta),\rho_a,\eps)
              ~\supset~ \Ipiccola(X((a+1)\delta),\rho_{a+1},\eps).
    \end{gather}
\end{itemize}
\end{lemma}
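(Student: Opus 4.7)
The plan is to prove the three properties simultaneously by finite induction on $a \in \{0, 1, \ldots, 1/\delta\}$, with the output of step $a$ feeding into step $a+1$. At each discretisation time $a\delta$ the index sets $\Igrande(X(a\delta), \rho_a, \eps)$ and $\Ipiccola(X(a\delta), \rho_a, \eps)$ are defined via $\eps$-chains rooted in $B(0, \rho_a)$, and the hypothesis $X \notin \BadPath'_{\text{Chain}}(\delta, \alpha, \kappa, \eps)$ limits every such chain to at most $\kappa$ spheres (because $\rho_a \leq \rho_0 \leq \alpha$). This directly yields the endpoint bound $|\grande_i(a\delta)| \leq \rho_a + (\kappa-1)(2\rgrande+\eps)$ for spheres in $\Igrande$, and $|\piccola_k(a\delta)| \leq \rho_a + (\kappa-1)(2\rgrande+\eps) + \rdep + \eps$ for particles in $\Ipiccola$, which will serve as the time-$a\delta$ base for localisation.

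To extend the localisation to the subinterval $[a\delta, (a+1)\delta]$, I would apply the Fast hypothesis $X \notin \BadPath'_{\text{Fast}}(\alpha, \delta, \eps/4)$ ball by ball. The static bound just derived, together with the inductive hypothesis, places every ball of the index set inside $B(0, \alpha)$ at time $a\delta$, so its trajectory enters $B(0,\alpha)$, Fast applies, and the oscillation on the subinterval is at most $\eps/4$; combining these two bounds yields the claimed localisation $|\grande_i(t)| \leq \rho_a + \kappa(2\rgrande+\eps) + \eps/4$ and the analogous particle estimate. For the separation property, the very definition of $\Igrande$ forces $|\grande_i(a\delta) - \grande_j(a\delta)| \geq 2\rgrande + \eps$ whenever $i \in \Igrande$ and $j \notin \Igrande$, since otherwise $\grande_j$ would extend the chain of $\grande_i$. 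I would then split according to whether the outsider $\grande_j$ ever enters $B(0, \alpha)$: if it does, Fast applied to both balls consumes at most $\eps/2$ of the gap, leaving the required $2\rgrande + \eps/2$; if it does not, localisation of $\grande_i$ gives $|\grande_i(t)| < \alpha$ while $|\grande_j(t)| > \alpha$ for all $t$, yielding a strictly larger separation. Sphere--particle separations are treated identically, with threshold $\rdep + \eps$ replacing $2\rgrande + \eps$.

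For the nested inclusion, I would trace an $\eps$-chain witnessing $\grande_i \in \Igrande(X((a+1)\delta), \rho_{a+1}, \eps)$ back to time $a\delta$: the first sphere $\grande_{j_1}$ satisfies $|\grande_{j_1}(a\delta)| \leq \rho_{a+1} + \eps/4 \leq \rho_a$, thanks to the generous decrement $\rho_a - \rho_{a+1} = 2\kappa(2\rgrande+\eps)$, so it already belongs to the index set at $a\delta$. I would then propagate along the chain and show, by contradiction, that no sphere of the chain can sit outside $\Igrande(X(a\delta), \rho_a, \eps)$ at time $a\delta$: such a crossing would create an in/out pair at time $a\delta$ whose Fast-controlled evolution between $a\delta$ and $(a+1)\delta$ is inconsistent with the chain link $|\grande_{j_m}((a+1)\delta) - \grande_{j_{m+1}}((a+1)\delta)| < 2\rgrande + \eps$. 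The particle statement reduces to the sphere one via the additional sphere--particle layer $\rdep + \eps$.

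The main obstacle, and the part requiring the most care, is the joint slack accounting: the Fast oscillation $\eps/4$, the chain-extension layer $\kappa(2\rgrande+\eps)$, and the radius decrement $2\kappa(2\rgrande+\eps)$ per step must all fit, over $1/\delta$ iterations, inside the single budget encoded by $\rho_0 = \rho + (2\kappa/\delta)(2\rgrande+\eps) \leq \alpha$. A particularly delicate point is the compatibility between the strict chain threshold $2\rgrande + \eps$ and the separation guarantee $2\rgrande + \eps/2$: the margin must be just large enough for the nested-inclusion contradiction to close, which explains the specific factors $2$ in the decrement $2\kappa(2\rgrande+\eps)$ and $1/4$ in the Fast oscillation cap chosen in the definition of $\Omega^\by$.
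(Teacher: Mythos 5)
Your plan for the separation and localisation properties is essentially correct and follows the paper's route: the No-Chain hypothesis bounds the length of any $\eps$-chain rooted in $B(0,\rho_a)$, pinning those spheres and particles inside a fixed ball, and the No-Fast hypothesis then caps their drift by $\eps/4$ over a $\delta$-step. (Minor: the paper uses the bound $\rho_a+\kappa(2\rgrande+\eps)$ rather than your sharper $\rho_a+(\kappa-1)(2\rgrande+\eps)$; both suffice.)

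However, your nested-inclusion argument has a genuine gap, and it is exactly at the point you flag as ``the margin must be just large enough for the nested-inclusion contradiction to close.'' It does not close. Consider an in/out pair at time $a\delta$: $\grande_{j_m}\in\Igrande(X(a\delta),\rho_a,\eps)$, $\grande_{j_{m+1}}\notin\Igrande(X(a\delta),\rho_a,\eps)$. The separation property gives $|\grande_{j_m}(a\delta)-\grande_{j_{m+1}}(a\delta)|\geq 2\rgrande+\eps$. After one $\delta$-step, with each ball moving at most $\eps/4$, the separation is still $\geq 2\rgrande+\eps/2$. The putative chain link at time $(a+1)\delta$ requires $|\grande_{j_m}((a+1)\delta)-\grande_{j_{m+1}}((a+1)\delta)|<2\rgrande+\eps$. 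Since $2\rgrande+\eps/2<2\rgrande+\eps$, both conditions are simultaneously satisfiable; there is no contradiction. More structurally: the Fast oscillation only tightens a pair separation by $\eps/2$, which is strictly less than the gap $\eps$ between the no-crossing threshold $2\rgrande+\eps$ and the chain threshold $2\rgrande+\eps$ (they coincide!), so a chain-link-based back-propagation can never rule out the crossing.

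The paper avoids this entirely by a radius argument rather than a chain argument. If $j\notin\Igrande(X(a\delta),\rho_a,\eps)$, then $|\grandeX_j(a\delta)|>\rho_a$, and after one $\delta$-step $|\grandeX_j((a+1)\delta)|>\rho_a-\eps/4$. The key inequality is $\rho_a-\eps/4>\rho_{a+1}+\kappa(2\rgrande+\eps)$, which holds because $\rho_a-\rho_{a+1}=2\kappa(2\rgrande+\eps)$ and $\kappa(2\rgrande+\eps)>\eps/4$. Then the contrapositive of the localisation bound at time $(a+1)\delta$ (every member of $\Igrande(X((a+1)\delta),\rho_{a+1},\eps)$ lies within $\rho_{a+1}+\kappa(2\rgrande+\eps)$ of the origin at that time) forces $j\notin\Igrande(X((a+1)\delta),\rho_{a+1},\eps)$. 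So the ``factor $2$'' in the decrement is consumed by $\rho_a-\rho_{a+1}=2\kappa(2\rgrande+\eps) > \kappa(2\rgrande+\eps)+\eps/4$ (chain reach at the next step plus oscillation), not by a chain-link contradiction. You should replace your back-tracing step by this radius-and-contrapositive argument.
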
 

\begin{remark}
    This lemma will be used to show the convergence of the penalised trajectories $X^{\by,R}(\omega)$; we explain here the strategy.

    By definition, since $\omega\in\Omega^\by$, for $R$ large enough, the successive penalised trajectories $X^{\by,R}(\omega)$ and $X^{\by,R+1}(\omega)$ do not have very long chains nor too fast balls. We can then apply Lemma~\ref{LemmaNestedInclusion} to find that their balls starting in a very large region $B(0,\rho_0)$ have no collisions with balls starting further away. Furthermore, $X^{\by,R}(\omega)$ and $X^{\by,R+1}(\omega)$ are, locally, solutions of the same finite-dimensional SDE on the time interval $[0,1/\delta]$, since, for $R\gg\rho_0$, their penalisation drifts ($\nabla\psiGyR$, $\nabla\psiG^\by_{R+1}$, $\nabla\psiPyR$, and $\nabla\psiP^\by_{R+1}$) are equal to zero on $B(0,\rho_0)$. This means that the spheres and particles starting in $B(0,\rho_0)$ have the same paths (starting from $\by$ at time $0$) until time $t=1/\delta$, in both $X^{\by,R}(\omega)$ and $X^{\by,R+1}(\omega)$. The same reasoning applies on $[1/\delta,2/\delta]$ on a smaller region $B(0,\rho_1)$, using the fact that $X^{\by,R}(\omega,t)$ and $X^{\by,R+1}(\omega,t)$ coincide at $t=1/\delta$ in $B(0,\rho_1)$, since the balls therein are those that were in $B(0,\rho_0)$ at time $t=0$. This can be iterated for $1/\delta$ time steps, to find that the trajectories of the balls starting at most at a distance $\rho_{1/\delta}$ (given by subtracting $(1/\delta)$-times the maximal length of chains from $\rho_0$) from the origin coincide on the whole time interval $[0,1]$.
    Since $\rho_0$ can be made arbitrarily large, this means that, for all balls, there exists a value of $R$ beyond which the penalisation no longer affects them nor any other ball with which they interact. For any sphere $i$, then, the trajectories $\big(\Grande^{\by,R}_i(\omega,\cdot)\big)_R$, coincide from some $R$ large enough on, and similarly for the particles. As a consequence, the sequence of paths $\big(X^{\by,R}(\omega)\big)_R$ converges.
\end{remark}

\bigskip 

We now formalise the above reasoning.
Fix a configuration $\by=\bgrandey\bpiccolay\in\D$, a radius $\rho$, and consider an $\omega\in\Omega^\by$. 
By definition, for any fixed $\eps<\eps(\by)$, there exists $R(\eps)>0$ such that, for every $R\ge R(\eps)$,
\begin{align*}
    & X^{\by,R}(\omega) \notin \BadPath'_{\text{Chain}}(\delta(R),\alpha(R),\kappa(R),\eps),\ X^{\by,R}(\omega) \notin \BadPath'_{\text{Fast}}(\alpha(R),\delta(R),\eps/4) \\
    &  X^{\by,R+1}(\omega) \notin \BadPath'_{\text{Chain}}(\delta(R),\alpha(R),\kappa(R),\eps),\ X^{\by,R+1}(\omega) \notin \BadPath'_{\text{Fast}}(\alpha(R),\delta(R),\eps/4).
\end{align*} 
Recall that $\delta(R):=1/\floor{R^{1/3}}$, $\alpha(R):=R-2\rgrande$ and $\kappa(R):=\floor{R^{1/3}}$.
Since $\kappa(R)/\delta(R) \sim R^{2/3} \ll \alpha(R) \sim R$, 
there exists $R(\eps,\rho) \ge R(\eps)$ such that for any $R\ge R(\eps,\rho)$:
\begin{equation}\label{eq:ControlOnTheRhos}
    \rho+\frac{\eps}{\delta(R)} +\left( \frac{2}{\delta(R)}+1\right) \kappa(R)(2\rgrande+\eps) +\rdep+\frac{5\eps}{4}
    \le \alpha(R).    
\end{equation}
We now fix $R\ge R(\eps,\rho)$.
Let us prove by induction that, for all $t\in[0,1]$: for all $i$ such that $|\grandey_i|<\rho$, all $k$ such that $|\piccolay_k|<\rho$, and all $R\ge R(\eps,\rho)$,
\begin{equation}\label{eq:eventuallyconstant}
   \grandeX_i^{\by,R}(\omega,t)=\grandeX_i^{\by,R+1}(\omega,t) \text{ and }
   \piccolaX_k^{\by,R}(\omega,t)=\piccolaX_k^{\by,R+1}(\omega,t).
\end{equation}
At time $t=0$, the initial configurations $X^{\by,R}(\omega,0)=\by_{B(0,R)}$ and $X^{\by,R+1}(\omega,0)=\by_{B(0,R+1)}$ coincide as far as their spheres in\\ 
$\Igrande(X^{\by,R}(\omega,0),\rho_0,\eps)$ and their particles in 
$\Ipiccola(X^{\by,R}(\omega,0),\rho_0,\eps)$ are concerned, for
\begin{equation*}
    \rho_0 := \rho+\frac{\eps}{\delta(R)}+\frac{2\kappa(R)}{\delta(R)} (2\rgrande+\eps), 
\end{equation*}
since $\rho_0+\kappa(R)(2\rgrande+\eps) +\rdep \le R$ thanks to \eqref{eq:ControlOnTheRhos}.
This provides the initial step for the induction procedure.

We turn to the induction  step.
% From time period $[0,a\delta(R)]$ to time period $[0,(a+1)\delta(R)]$.  %%%%%%%%%%%%%
Assume that, for a fixed integer $a$ between $0$ and $1/\delta(R)$, and for 
\begin{equation}
    \rho_a := \rho+\frac{\eps}{\delta(R)}+2\Big( \frac{1}{\delta(R)}-a \Big) \kappa(R)(2\rgrande+\eps),
\end{equation}the processes $X^{\by,R}(\omega,\cdot)$ and $X^{\by,R+1}(\omega,\cdot)$ and their local times coincide on $[0,a\delta(R)]$ as far as their spheres in 
$\Igrande(X^{\by,R}(\omega,a\delta),\rho_a,\eps)$ and their particles in 
$\Ipiccola(X^{\by,R}(\omega,a\delta),\rho_a,\eps)$ are concerned, and satisfy~\eqref{eq:SDE2infty} restricted to these balls up to time $a\delta(R)$.
We want to prove the same result for $(a+1)$ instead of $a$.
The separation property in Lemma~\ref{LemmaNestedInclusion} implies that the spheres in
\begin{equation*}
    \Igrande:=\Igrande(X^{\by,R}(\omega,a\delta(R)),\rho_a,\eps)
          =\Igrande(X^{\by,R+1}(\omega,a\delta(R)),\rho_a,\eps)
\end{equation*}
or the particles in
\begin{equation*}
    \Ipiccola:=\Ipiccola(X^{\by,R}(\omega,a\delta(R)),\rho_a,\eps)
              =\Ipiccola(X^{\by,R+1}(\omega,a\delta(R)),\rho_a,\eps)  
\end{equation*}
do not bump, from time $a\delta(R)$ to time $(a+1)\delta(R)$, into the spheres and particles which do not belong to these index sets.
Thanks to ($\mathcal{S}^\by_{R}$) and its Markov property, this implies:
\small
\begin{equation*} 
\begin{cases}
    \text{for any } i\in\Igrande, \, 
    \text{ any } k\in\Ipiccola, \, \text{ and any } t\in[a\delta(R),(a+1)\delta(R)], \\ \disp  
  \Grande_i^{\by,R}(\omega,t) 
    = \Grande_i^{\by,R}(\omega,a\delta(R)) +\WGrande_i(\omega,t) -\WGrande_i(\omega,a\delta(R))
    -\frac12 \int_{a\delta(R)}^t \nabla\psiGyR\big(\Grande_i^{\by,R}(\omega,s)\big) \,ds \\ \disp \qquad\qquad\qquad
   +\sum_{j\in\Igrande} \int_{a\delta(R)}^t \big(\Grande_i^{\by,R}-\Grande_{j}^{\by,R}\big)(\omega,s) dL_{ij}(\omega,s) 
   +\sum_{k\in\Ipiccola} \int_{a\delta(R)}^t \big(\Grande_i^{\by,R}-\Piccola_{k}^{\by,R}\big)(\omega,s) d\ell_{ik}(\omega,s) \\ \disp 
 \Piccola_k^{\by,R}(\omega,t) 
 = \Piccola_k^{\by,R}(\omega,a\delta(R)) +\sigmaP\, \WPiccola_k^{\by,R}(\omega,t) -\sigmaP\, \WPiccola_k^{\by,R}(\omega,a\delta(R)) \\
    \disp \qquad\qquad\qquad
   -\frac{\sigmaP^2}{2} \int_{a\delta(R)}^t \nabla\psiPyR\big(\Piccola_k^{\by,R}(\omega,s)\big) \,ds
   +\sigmaP^2 \sum_{j\in\Igrande} \int_{a\delta(R)}^t \big(\Piccola_k^{\by,R}-\Grande_{i}^{\by,R}\big)(\omega,s)  d\ell_{ik}(\omega,s) .
\end{cases}
\end{equation*}
\normalsize
This is also true for $R+1$ instead of $R$.
The $\Grande_i^{\by,R}(\omega,\cdot)$ and $\Grande_i^{\by,R+1}(\omega,\cdot)$ for $i\in\Igrande$, as well as 
the $\Piccola_k^{\by,R}(\omega,\cdot)$ and $\Piccola_k^{\by,R+1}(\omega,\cdot)$ for $k\in\Ipiccola$,
stay in the ball $B(0,R-2\rgrande)$ where $\nabla\psiGyR$, $\nabla\psiG^\by_{R+1}$, $\nabla\psiPyR$ and $\nabla\psiP^\by_{R+1}$ vanish, as a consequence of the localisation property from Lemma~\ref{LemmaNestedInclusion}
and the inequality 
$\rho_a + \kappa(2\rgrande+\eps) +\rdep+\frac{5\eps}{4} \le R-2\rgrande$.
The restriction to these balls and this time interval of~($\mathcal{S}^\by_{R}$) and~($\mathcal{S}^\by_{R+1}$) both coincide with the corresponding restriction of~\eqref{eq:SDE2infty}.

The equality of $X^{\by,R}(\omega,\cdot)$ and $X^{\by,R+1}(\omega,\cdot)$ at time $a\delta(R)$ for spheres and particles in $\Igrande$ and $\Ipiccola$, 
and the uniqueness of the solution of the above equation imply that $X^{\by,R}(\omega,\cdot)$ and $X^{\by,R+1}(\omega,\cdot)$ -- and the corresponding local times -- coincide on the time interval $[a\delta(R),(a+1)\delta(R)]$ as far as their spheres in $\Igrande$ and their particles in $\Ipiccola$ are concerned.

In particular, 
$\Grande_i^{\by,R}(\omega,(a+1)\delta(R)) = \Grande_i^{\by,R+1}(\omega,(a+1)\delta(R))$ and 
$\Piccola_k^{\by,R}(\omega,(a+1)\delta(R)) = \Piccola_k^{\by,R+1}(\omega,(a+1)\delta(R))$
for spheres $i\in\Igrande$ and particles $k\in\Ipiccola$.
The inclusion property from Lemma~\ref{LemmaNestedInclusion} then yields
\begin{align*}
    \Igrande(X^{\by,R}((a+1)\delta(R)),\rho_{a+1},\eps) 
    = \Igrande(X^{\by,R+1}((a+1)\delta(R)),\rho_{a+1},\eps) 
    \subset
    \Igrande, \\
    \Ipiccola(X^{\by,R}((a+1)\delta(R)),\rho_{a+1},\eps) 
    = \Ipiccola(X^{\by,R+1}((a+1)\delta(R)),\rho_{a+1},\eps) 
    \subset
    \Ipiccola.
\end{align*}
As a result, $X^{\by,R}(\omega,\cdot)$ and $X^{\by,R+1}(\omega,\cdot)$ and their local times coincide on $[0,(a+1)\delta(R)]$ as far as their spheres in 
$\Igrande(X^{\by,R}(\omega,(a+1)\delta(R)),\rho_{a+1},\eps)$ and their particles in 
$\Ipiccola(X^{\by,R}(\omega,(a+1)\delta(R)),\rho_{a+1},\eps)$ are concerned, 
and the equation ($\mathcal{S}^\by_{R}$) they satisfy reduces to~\eqref{eq:SDE2infty} for these balls on this time interval.
This concludes the induction step, so that the above step-by-step equality result holds for each integer $a$ up to $1/\delta(R)$. 

The solutions $X^{\by,R}(\omega,\cdot)$ and $X^{\by,R+1}(\omega,\cdot)$ 
of the penalised equations starting from configuration $\by$ are equal up to time $1$, as it comes to their components which are spheres in $\Igrande(X^{\by,R}(\omega,1),\rho_{1/\delta(R)},\eps)$ or particles in $\Ipiccola(X^{\by,R}(\omega,1),\rho_{1/\delta(R)},\eps)$.
Moreover, since $X^{\by,R}(\omega,\cdot)$ and $X^{\by,R+1}(\omega,\cdot)$ both do not have balls that cover a distance larger than $\frac{\eps}{\delta(R)}$ during the time interval $[0,1]$ if they ever enter $B(0,\alpha(R))$, hence stay in $B(0,\rho+\frac{\eps}{\delta(R)})=B(0,\rho_{1/\delta(R)})$ if they start in $B(0,\rho)$, we have
\begin{align*}
   &\Big\{ i : |\grandey_i|<\rho  \Big\} 
   ~\subset~ \Igrande(X^{\by,R}(\omega,1),\rho_{1/\delta(R)},\eps) 
   ~\text{ and }~\\
   &\Big\{ k : |\piccolay_k|<\rho  \Big\} 
   ~\subset~ \Ipiccola(X^{\by,R}(\omega,1),\rho_{1/\delta(R)},\eps).
\end{align*} 
The penalised processes $X^{\by,R}(\omega,\cdot)$ and $X^{\by,R+1}(\omega,\cdot)$ are then the same process (including local times) for all spheres and particles starting in $B(0,\rho)$, and satisfy~\eqref{eq:SDE2infty}. 
\medbreak

Let us summarise the result of this induction process:
For any $\by=\bgrandey\bpiccolay\in\D$ and $\omega\in\Omega^\by$, 
for $\eps<\eps(\by)$, for any finite collection of spheres $\grandey_i$ and particles $\piccolay_k$, 
there exists a radius $\rho$ larger than all the $|\grandey_i|$'s and $|\piccolay_k|$'s, and there exists $R(\eps,\rho)$ large enough such that, as soon as $R>R(\eps,\rho)$, all the $\Grande^{\by,R}_i(\omega,t)$ and $\Piccola^{\by,R}_k(\omega,t)$ are equal for $t\in[0,1]$.
All the local times for the sphere-sphere collisions and the sphere-particle collisions in this collection also do not depend on $R$ as soon as $R>R(\eps,\rho)$.
Moreover, the eventually constant limit paths $\Grande^{\by,\infty}_i(\omega,\cdot):=\lim_{R\to+\infty}\Grande^{\by,R}_i(\omega,\cdot)$ and $\Piccola^{\by,\infty}_k(\omega,\cdot):=\lim_{R\to+\infty}\Piccola^{\by,R}_k(\omega,\cdot)$ satisfy~\eqref{eq:SDE2infty}.
Since $\bX^{\by,R}(\omega,0)=\by$ for all $R$'s by construction, $\bX^{\by,\infty}(\omega,0):=\by$. This concludes the proof of Proposition~\ref{prop:Convergence_y}.

\subsection{Proof of Proposition~\ref{prop:FullMeasure}: almost sure convergence} \label{sec:almost-sure}%%%%%%%%%%%%%%%%%%%%%%%%%%%%%%%%%%%%%%

We prove here Proposition~\ref{prop:FullMeasure}, that is that $ \int_\D P((\Omega^\by)^c) \mu(d\by) = 0 $ for $\mu\in\Gcal_{\zgrande,\zpiccola}$. 
Recall that, for $\delta(R):=1/\floor{R^{1/3}}$, $\alpha(R):=R-2\rgrande$ and $\kappa(R):=\floor{R^{1/3}}$, the complement set of $\Omega^\by$ is given by
\begin{align}
    (\Omega^\by)^c = \bigcap_{\eps(\by)\in1/\N} 
      ~\bigcup_{\substack{\eps\in1/\N \\ \eps<\eps(\by)}} 
      ~\limsup_{R\to+\infty} ~ \Big\{ X^{\by,R} \in \BadPath'(\eps,R) \text{ or } 
                                      X^{\by,R+1} \in \BadPath'(\eps,R) \Big\},
\end{align}           
where
$\disp \BadPath'(\eps,R) 
       := \BadPath'_{\text{Chain}}(\delta(R),\alpha(R),\kappa(R),\eps) 
          \cup \BadPath'_{\text{Fast}}(\alpha(R),\delta(R),\eps/4).$
This can be rewritten as          
\begin{align}\label{eq:OmegaComplementary}
   (\Omega^\by)^c 
 & = \bigcap_{\eps(\by)\in1/\N} 
      ~\bigcup_{\substack{\eps\in1/\N \\ \eps<\eps(\by)}} 
      ~\limsup_{R\to+\infty} ~ 
      \Big\{ X^{\by,R} \in \BadPath'(\eps,R)\cup\BadPath'(\eps,R-1) \Big\} .                
\end{align} 
Thanks to the Borel--Cantelli lemma, $\int_\D P((\Omega^\by)^c) \mu(d\by) =0$ as soon as there exists $\eps_0>0$ such that:
\begin{align}
 \forall \eps<\eps_0\,, \quad
 \sum_R \int_\D P\Big( X^{\by,R} \in \BadPath'(\eps,R)\cup\BadPath'(\eps,R-1) \Big) \mu(d\by) \ < +\infty.
\end{align} 
A sufficient condition for the above summability to hold is 
\begin{align}\label{eq:BorelCantelliQyR}
 & \exists\eps_0>0: \forall \eps<\eps_0\,, \\
   &\phantom{\exists\eps_0>0: }
   \sum_R \int_\D Q^\by_R\Big( \BadPath'(\eps,R)\cup\BadPath'(\eps,R-1) \Big) \mu(d\by) \ < +\infty, \\
 & \text{ and }\quad \sum_R \mathbf{d}_{TV}(R) ~<+\infty , \quad\text{ where } \label{eq:BorelCantelliTV} \\ 
 & \mathbf{d}_{TV}(R) 
   := \sup_{\Theta\subset \C([0,1],\D)} 
      \left| \int_\D P\Big( X^{\by,R} \in \Theta \Big) \mu(d\by)
             -\int_\D Q^\by_R\Big( \Theta \Big)\mu(d\by) \right|. 
\end{align} 
The proof of~\eqref{eq:BorelCantelliQyR} relies on the two following results on chains and fast motion, whose proofs are postponed to Sections~\ref{sec:proofLongChain} and~\ref{sec:proofFastMotion}, respectively.
\begin{lemma}\label{prop:LongChainDynamics}
For each sphere activity $\zgrande>0$, each chain length $\kappa\in\N^*$, each radius $\alpha>0$, and each chain leeway $\eps\in(0,2\rgrande)$, the probability of a long sphere chain to occur is bounded by:
\begin{align*}
   &\forall \by\in\D, \, \forall R>0\,, \quad
   Q^\by_R\Big( \BadPath'_{\text{Chain}}(\delta,\alpha,\kappa,\eps) \Big) 
   \le \frac{1}{\delta} \left( \zgrande \, ((2\rgrande+\eps)^d-(2\rgrande)^d)\volball_d \right)^\kappa.
\end{align*}
\end{lemma}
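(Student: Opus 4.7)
The plan is to reduce the dynamic estimate to a static Gibbs-measure bound via reversibility, and then to apply a first-moment/Campbell argument together with a Ruelle-type bound on the correlation functions of $\mu^\by_R$.

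\emph{Step 1: static reduction via reversibility.} Proposition~\ref{prop:ExistenceSolRevSnmR} gives reversibility of each $\nu^\by_{R,\ngrande,\npiccola}$ for $(\mathcal{S}^\by_R)$, and by linearity of the Poissonian mixture~\eqref{eq:muRy}--\eqref{eq:QRy}, $\mu^\by_R$ is reversible under $Q^\by_R$. In particular, for every $t \in [0,1]$ the time-$t$ marginal of $Q^\by_R$ is exactly $\mu^\by_R$. Since $\BadPath'_{\text{Chain}}(\delta,\alpha,\kappa,\eps)$ is determined by the configuration at the discretization times $k\delta$ for $0 \le k \le 1/\delta$, a union bound gives
\begin{equation*}
    Q^\by_R\bigl(\BadPath'_{\text{Chain}}(\delta,\alpha,\kappa,\eps)\bigr)
    \;\le\; \frac{1}{\delta}\,\mu^\by_R\bigl(\BadPath_{\text{Chain}}(\alpha,\kappa,\eps)\bigr).
\end{equation*}

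\emph{Step 2: Campbell first-moment bound.} The event $\BadPath_{\text{Chain}}$ is implied by the existence of at least one admissible ordered $(\kappa+1)$-tuple of spheres realising the chain conditions, so Markov's inequality together with the Campbell formula yields
\begin{equation*}
    \mu^\by_R\bigl(\BadPath_{\text{Chain}}\bigr)
    \;\le\; \int_{B(0,\alpha)} dx_1 \int \cdots \int dx_{\kappa+1}\;
    \rho^{(\kappa+1)}(x_1,\dots,x_{\kappa+1}) \prod_{a=1}^{\kappa} \un_{\{|x_a-x_{a+1}|<2\rgrande+\eps\}},
\end{equation*}
where $\rho^{(\kappa+1)}$ is the $(\kappa+1)$-point correlation function of the sphere marginal of $\mu^\by_R$.

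\emph{Step 3: Ruelle bound and volume integration.} The measure $\mu^\by_R$ is a bi-species hard-core Gibbs process with sphere reference intensity $\zgrande \lambdaGyR$. Since $\psiGyR \ge 0$ and pinning $\kappa+1$ mutually-compatible spheres at $x_1,\dots,x_{\kappa+1}$ can only shrink the admissibility-conditional partition function in the denominator of the Gibbs kernel, a classical Ruelle-type argument yields $\rho^{(\kappa+1)}(x_1,\dots,x_{\kappa+1}) \le \zgrande^{\kappa+1}$ on admissible tuples. The hard-core constraint $|x_a-x_{a+1}|\ge 2\rgrande$ --- automatic from $\D$-admissibility --- combined with $|x_a-x_{a+1}|<2\rgrande+\eps$ confines each $x_{a+1}$ to a spherical shell of volume $V := ((2\rgrande+\eps)^d-(2\rgrande)^d)\,\volball_d$. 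Performing the $\kappa$ nested shell integrals (starting from $x_{\kappa+1}$) and using the uniform Ruelle bound yields the announced estimate.

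\emph{Main obstacle.} The delicate point is \textbf{Step 3}: verifying the Ruelle bound $\rho^{(\kappa+1)}(x_1,\dots,x_{\kappa+1}) \le \zgrande^{\kappa+1}$ for the penalized bi-species measure $\mu^\by_R$, whose definition via the mixture~\eqref{eq:muRy} intertwines spheres and particles. One must carefully unpack this mixture, integrate out the particle degrees of freedom (the particle reference intensity $\zpiccola \lambdaPyR$ only gets more restricted by pinning new spheres, so its contribution to the pinned partition function is dominated by the unpinned one), and check that the remaining sphere partition function with pinned points is bounded by the full $\Zmixing^\by_R$. The factor $e^{-\psiGyR(x_a)} \le 1$ from the penalization then disappears harmlessly, while the factor $\zgrande^{\kappa+1}$ arises from the reference Poisson intensity of the $\kappa+1$ pinned spheres. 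Once this static estimate is in place, the remainder of the proof is routine volume counting.
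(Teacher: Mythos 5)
Your Step~1 (union bound over the $1/\delta$ discretisation times and reversibility) is exactly what the paper does. Steps~2 and~3 take a genuinely different route: the paper proves the static estimate $\mu^\by_R(\BadPath_{\text{Chain}}(\alpha,\kappa,\eps)) \le (\zgrande V)^\kappa$ by a direct chain-peeling induction inside the unnormalised finite-volume measures $\nu^\by_{R,\ngrande,\npiccola}$ (strip the last sphere of the chain, bound its integral by the shell volume $V$, transfer to the Poissonian mixture), whereas you propose a Campbell first-moment bound combined with a Ruelle estimate on correlation functions. The paper's inductive route buys two things your route does not: it sidesteps Ruelle bounds entirely, and, crucially, it terminates in the probability $\mu^\by_R(\BadPath_{\text{Chain}}(\alpha,0,\eps)) \le 1$.

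This is where your proposal has a genuine gap. The Campbell/Markov bound counts \emph{ordered admissible $(\kappa+1)$-tuples}, and the position $x_1$ of the seed of the chain must still be integrated over $B(0,\alpha)$. With the Ruelle bound $\rho^{(\kappa+1)} \le \zgrande^{\kappa+1}$ and $\kappa$ shell integrals each of volume $V$, one obtains
\begin{equation*}
\mu^\by_R\bigl(\BadPath_{\text{Chain}}(\alpha,\kappa,\eps)\bigr)
\;\le\; \zgrande^{\kappa+1}\,V^\kappa \cdot \volball_d\,\alpha^d,
\end{equation*}
which carries an extra factor $\zgrande\,\volball_d\,\alpha^d$ relative to the announced $(\zgrande V)^\kappa$. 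This is not cosmetic: in the downstream application of Proposition~\ref{prop:FullMeasure} one takes $\alpha(R)\sim R$, so your bound grows polynomially where the lemma's bound does not. (The geometric decay $(\zgrande V)^\kappa$ with $\kappa\sim R^{1/3}$ would still beat the polynomial, so the Borel--Cantelli argument would survive --- but the lemma as stated is not proven.) The fix is to replace the Markov/Campbell count by the paper's induction, whose base case is a probability $\le 1$ rather than an expected number of seeds. Separately, the Ruelle-type bound for the penalised bi-species measure that you flag in Step~3 is plausible (it is a hard-core model with $\psiGyR\ge 0$), but it is an additional lemma you would need to establish; the paper's peeling argument derives the same $\zgrande V$ factor at each step directly from~\eqref{eq:nu_yRn} without invoking correlation-function machinery.
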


\begin{lemma}\label{prop:OmegaFast}
There exists a constant $C_{\text{Fast}}\equiv C_{\text{Fast}}(d,\sigmaP,\zgrande,\zpiccola)$, depending only on the dimension $d$, the particle diffusion coefficient $\sigmaP$, and the activities $\zgrande$ and $\zpiccola$, such that for any oscillation parameters $0<\eps\le1$, $0<\delta\le1$, and any radius $\alpha\ge1$:
\begin{align*}
 \forall \by\in\D, \ \forall R>0\,, \quad
 Q^\by_R\left( \BadPath'_{\text{Fast}}(\alpha,\delta,\eps) \right) 
 & \le C_{\text{Fast}} ~ \frac{ \alpha^d  }{\delta^{d+1}} ~
       \exp(-\frac{\eps^2}{10\, d \,\delta~\max(1,\sigmaP^2)}). 
\end{align*}
\end{lemma}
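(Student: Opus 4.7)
The plan is to exploit reversibility of $Q^\by_R$---whose marginal at each fixed time is $\mu^\by_R$---together with Brownian-type tail bounds on the displacement of individual balls. First, I would discretise in time by the grid $t_k := k\delta$, $k=0,1,\ldots,\lfloor 1/\delta\rfloor$, and in space by cubes $K$ of side $\delta$ tiling a ball slightly larger than $B(0,\alpha)$ (the enlargement absorbing the maximum typical displacement of a ball over $[0,1]$, up to a harmless logarithmic factor). The triangle inequality applied to $|f(s_1)-f(s_2)|>\eps$ with $|s_1-s_2|<\delta$ places both times inside some $[t_k,t_{k+2}]$ and forces $\sup_{t\in[t_k,t_{k+2}]}|f(t)-f(t_k)|>\eps/2$. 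A short case analysis on the position of the minimum $\min_s|f(s)|\le\alpha$ then shows that $\BadPath'_{\text{Fast}}(\alpha,\delta,\eps)$ is covered by the union, over the $O(\alpha^d/\delta^{d+1})$ pairs $(k,K)$, of the events $A_{k,K}:=\{\exists\ \text{ball}\ b:\,b(t_k)\in K\text{ and }\sup_{t\in[t_k,t_{k+2}]\cap[0,1]}|b(t)-b(t_k)|>\eps/4\}$.

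For each $A_{k,K}$ I would condition on the configuration $\bX^{\by,R}(t_k)$: by reversibility this has law $\mu^\by_R$, which is a hard-core-thinned Poisson point process. An FKG-type negative correlation between the decreasing indicator $\un_\D$ and the increasing counting function yields $E_{\mu^\by_R}[\sharp\{\text{balls in }K\}]\le(\zgrande+\zpiccola)\vol{K}=(\zgrande+\zpiccola)\delta^d$. The strong Markov property at $t_k$ then reduces matters to bounding the probability that a single ball's trajectory oscillates by more than $\eps/4$ over $[0,2\delta]$. Writing~\eqref{eq:SDEnSpheres} for a single ball as a Brownian motion (with coefficient $1$ or $\sigmaP$) plus a uniformly-bounded drift $-\tfrac12\nabla\psiGyR$ or $-\tfrac{\sigmaP^2}{2}\nabla\psiPyR$ (the sup-norms of these gradients being finite by the construction in Section~\ref{sebsect:penalisationFunctions}) plus reflection terms, and using the Lipschitz continuity of the Skorokhod reflection map on the admissible domain $\D$ in the sense of Saisho, I would obtain the almost sure inequality $\sup_{t\in[0,2\delta]}|b(t)-b(0)|\le C\bigl(\sup_{t\in[0,2\delta]}|W(t)-W(0)|+\delta\bigr)$. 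Lévy's modulus for Brownian motion then gives a Gaussian tail of the form $2d\exp\bigl(-\eps^2/(c\,d\,\delta\max(1,\sigmaP^2))\bigr)$.

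Summing over the $O(\alpha^d/\delta^{d+1})$ pairs $(k,K)$, and absorbing the $O(\delta)$ drift contribution into the exponent's prefactor (legitimate under the standing assumptions $0<\eps,\delta\le 1$ and $\alpha\ge 1$), delivers the claimed bound. The main obstacle will be controlling the single-ball oscillation despite the reflection coming from neighbouring spheres and particles: a priori, successive collisions could transfer displacement between balls and amplify the motion of $b$ beyond its own Brownian increment. Addressing this rigorously rests on (i) the Lipschitz continuity of the Skorokhod reflection map on the non-smooth boundary of $\D$ (which fits into the Lions--Sznitman/Saisho framework, since the forbidden region is locally a finite union of open balls), and (ii) the uniform control on $\nabla\psiGyR$ and $\nabla\psiPyR$ provided by the specific penalisation construction of Section~\ref{sebsect:penalisationFunctions}.
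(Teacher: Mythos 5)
There is a genuine gap at the heart of your argument: the claimed componentwise estimate
$\sup_{t\in[0,2\delta]}|b(t)-b(0)|\le C\bigl(\sup_{t\in[0,2\delta]}|W_b(t)-W_b(0)|+\delta\bigr)$,
where $W_b$ is the driving Brownian motion of ball $b$ alone, does not follow from the Lipschitz (or Tanaka-type continuity) property of the Skorokhod map. What the Skorokhod theory gives (on convex domains after Tanaka, and under Saisho's conditions after a localisation argument) is a bound of the form $\sup_t|X(t)-X(s)|\le C\sup_{s\le u\le v\le t}|W(v)-W(u)|$ where $X$ and $W$ denote the \emph{full vectors} $(X_1,\dots,X_n,\dots)$ and $(W_1,\dots,W_n,\dots)$. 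The modulus of continuity of one component $X_b$ is therefore controlled only by the joint modulus of $(W_j)_j$ ranging over \emph{all} balls, not by $W_b$ alone. This is precisely the displacement-transfer scenario you flag at the end: a chain of touching spheres can push $b$ far while $W_b$ itself barely moves. Invoking the Saisho framework does not close this gap; it would still require a union bound over all balls in a huge neighbourhood, and the resulting constants (which depend on the local geometry and number of constraints) would blow up. So the step ``strong Markov at $t_k$ + single-ball Brownian oscillation'' does not go through, and with it the Gaussian tail you need.

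The paper circumvents this obstacle completely by a Lyons--Zheng-type forward--backward decomposition: exploiting reversibility of $Q^\by_{R,\ngrande,\npiccola}$, one checks that the time-reversed process is again a solution of~\eqref{eq:SDEnSpheres}, and summing the two SDEs makes the drift and \emph{all} local time terms cancel exactly, leaving the identity~\eqref{eq:SumForwardBackward}, namely $\Grande_i(t)=\Grande_i(0)+\tfrac12\bigl(\WGrande_i(t)+\underleftarrow{\WGrande_i}(1-t)-\underleftarrow{\WGrande_i}(1)\bigr)$. The displacement of ball $i$ is then bounded by oscillations of two genuine Brownian motions with no reflection present at all, which is what makes Lemma~\ref{LemmeBrownianEstimate} usable with constants independent of the configuration. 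The rest of the paper's argument (exchangeability to reduce to one ball, shelling the initial position by $\alpha_j=\alpha+\eps/\delta+j/\delta$ and using that reaching $B(0,\alpha)$ in time $1$ forces a large displacement on some $\delta$-interval, then summing the Poisson mixture) is close in spirit to your cube-and-grid covering and the intensity bound via domination by the Poisson process. Those parts of your outline are serviceable, but the single-ball oscillation estimate needs to be replaced by the time-reversal cancellation; without it the proof does not close.
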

Thanks to Lemma~\ref{prop:LongChainDynamics}, 
\begin{align*} 
 & \sum_R \int_\D Q^\by_R\Bigg( \BadPath'_{\text{Chain}}(\delta(R),\alpha(R),\kappa(R),\eps) \Bigg) \mu(d\by)\\
 & \le ~ \sum_R \floor{R^{1/3}}
               \left( \zgrande \, ((2\rgrande+\eps)^d-(2\rgrande)^d)\volball_d \right)^{\floor{R^{1/3}}} ~ 
  <+\infty,
\end{align*}
as soon as $\eps$ is small enough for $\zgrande \, ((2\rgrande+\eps)^d-(2\rgrande)^d)\volball_d<1$ to hold, which is the case, for example, if 
$\eps< \min( 2\rgrande, \left( \zgrande \, (4\rgrande+1)^d \volball_d \right)^{-1} )$.
Under the same condition on $\eps$, one also obtains
\begin{align*} 
 \sum_R \int_\D Q^\by_R\Bigg( \BadPath'_{\text{Chain}}(\delta(R-1),\alpha(R-1),\kappa(R-1),\eps) \Bigg) \mu(d\by) ~ 
 <+\infty .
\end{align*}
Thanks to Lemma~\ref{prop:OmegaFast}, for any $0<\eps\le 1$,
\begin{align*} 
 & \sum_R \int_\D Q^\by_R\Bigg( \BadPath'_{\text{Fast}}(\alpha(R),\delta(R),\eps/4) \Bigg) \mu(d\by) \\ 
 & \le ~ C_{\text{Fast}} \sum_R  ~ \floor{R^{1/3}}^{d+1} ~ (R-2\rgrande)^d ~\volball_d~
                \exp\left( -\frac{ \eps^2 \floor{R^{1/3}} }{10\, d ~\max(1,\sigmaP^2)} \right) 
 <+\infty,
\end{align*}
and the same summability holds for
\begin{align*} 
  \sum_R \int_\D Q^\by_R\Bigg( \BadPath'_{\text{Fast}}(\alpha(R-1),\delta(R-1),\eps/4) \Bigg) \mu(d\by) <+\infty.
\end{align*}
Thus \eqref{eq:BorelCantelliQyR} holds.
It only remains to prove \eqref{eq:BorelCantelliTV}, i.e.\ that the total variation distance between $\int_\D Q^\by_R(\cdot) \mu(d\by)$ and $\int_\D P\big( X^{\by,R} \in~\cdot~ \big) \mu(d\by)$ converges to zero in a summable way.
From~\eqref{eq:XyR} we have that, for any measurable set $\Theta$ of continuous paths in $\D$:
\begin{equation*}
\begin{split}
    &P\Big( X^{\by,R} \in \Theta \Big) = P\Big( \bX^{\by,R,\sharp(\bgrandey_{B(0,R)}),\sharp(\bpiccolay_{B(0,R)})}(\by_{B(0,R)},~\cdot~) \in \Theta \Big) 
       =: F^\by_\Theta(\by_{B(0,R)}) ,
\end{split}
\end{equation*}
where
$ F^\by_\Theta(\grande\piccola) 
       := P\big( \bX^{\by,R,\sharp\grande,\sharp\piccola}(\grande\piccola,~\cdot~) \in \Theta \big)$. Note that $F^\by_\Theta$ only depends on $\by_{{B(0,R)}^c}$ and not on  $\by_{B(0,R)}$.
Taking $\Lambda=B(0,R)$, with $\bgrande=\{\grande_1, \dots, \grande_\ngrande\}$ and $\bpiccola=\{\piccola_1, \dots, \grande_\npiccola\}$, in the definition~\eqref{eq:mu} of $\mu$, we find:
\begin{align*} 
& \int_\D P\Big( X^{\by,R} \in \Theta \Big) \, \mu(d\by) 
  = \int_\D F^\by_\Theta(\by_{B(0,R)}) \, \mu(d\by) \\
& = \int_\D \frac{1}{Z_{B(0,R)}(\by)} 
    \sum_{\ngrande=0}^{\infty} \frac{\zgrande^\ngrande}{\ngrande !}
    \sum_{\npiccola=0}^{\infty} \frac{\zpiccola^\npiccola}{\npiccola !}\\
&\phantom{= \int}
    \int_{B(0,R)^\ngrande} \int_{B(0,R)^\npiccola} 
    F^\by_\Theta(\bgrande\bpiccola) \, 
    \un_{\D}(\by_{B(0,R)^c} \bgrande\bpiccola) 
    \otimes_{k=1}^\npiccola d\piccola_k \otimes_{i=1}^\ngrande d\grande_i \, \mu(d\by).
\end{align*}
Recall that $\BallGyR$ and $\BallPyR$ are the parts of $B(0,R)$ where a sphere or a particle can be put without conflict with the outside configuration $\by_{B(0,R)}$ (see~\eqref{eq:BallGyR_BallPyR}):
\begin{align*} 
& \int_\D P\Big( X^{\by,R} \in \Theta \Big) \, \mu(d\by) \\
& = \int_\D \frac{1}{Z_{B(0,R)}(\by)} 
    \sum_{\ngrande=0}^{\infty} \frac{\zgrande^\ngrande}{\ngrande !}
    \sum_{\npiccola=0}^{\infty} \frac{\zpiccola^\npiccola}{\npiccola !}
    \int_{(\BallGyR)^\ngrande} \int_{(\BallPyR)^\npiccola} 
    F^\by_\Theta(\bgrande\bpiccola) \, \un_{\D}(\bgrande\bpiccola) 
    \otimes_{k=1}^\npiccola d\piccola_k \otimes_{i=1}^\ngrande d\grande_i \, \mu(d\by).
\end{align*}
According to the definitions~\eqref{eq:QRy},~\eqref{eq:Q_yRn}, and~\eqref{eq:nu_yRn} of the measures $Q^\by_R$, $Q^\by_{R,\ngrande,\npiccola}$, and $\nu^\by_{R,\ngrande,\npiccola}$, we have
\begin{align*}
    Q^\by_R\Big( \Theta \Big) 
    &= \frac{1}{\Zmixing^\by_R} 
       \sum_{\ngrande=0}^{+\infty} \frac{\zgrande^\ngrande}{\ngrande!} \, 
       \sum_{\npiccola=0}^{+\infty} \frac{\zpiccola^\npiccola}{\npiccola!} \, 
       \int P\left( \bX^{\by,R,\ngrande,\npiccola}(\bx,\cdot) \in\Theta \right) \, 
       d\nu^\by_{R,\ngrande,\npiccola}(\bx) \\
    & = \frac{1}{\Zmixing^\by_R} 
       \sum_{\ngrande=0}^{+\infty} \frac{\zgrande^\ngrande}{\ngrande!} \, 
       \sum_{\npiccola=0}^{+\infty} \frac{\zpiccola^\npiccola}{\npiccola!} 
       \int_{\R^{\ngrande d}} \int_{\R^{\npiccola d}} F^\by_\Theta(\bgrande\bpiccola) \, \un_{\D}(\bgrande\bpiccola) \, 
       \otimes_{i=1}^\ngrande \lambdaGyR(d\grande_i) \otimes_{k=1}^\npiccola \lambdaPyR(d\piccola_k).
\end{align*}
Distributing the terms according to the number of spheres and particles in $B(0,R)$, we get the total variation distance:
\begin{align*} 
   &\mathbf{d}_{TV}(R) \\
   &= \sup_{\Theta} \Bigg| \int_\D  
     \sum_{\ngrande=0}^{\infty} \frac{\zgrande^\ngrande}{\ngrande !}
     \sum_{\npiccola=0}^{\infty} \frac{\zpiccola^\npiccola}{\npiccola !}
     \int_{\R^{\ngrande d}} \int_{\R^{\npiccola d}}
     F^\by_\Theta(\bgrande\bpiccola) \, \un_{\D}(\bgrande\bpiccola) \, \Delta(\by,\bgrande\bpiccola) 
     \otimes_{k=1}^\npiccola d\piccola_k \otimes_{i=1}^\ngrande d\grande_i \mu(d\by) \Bigg|,
\end{align*} 
where
\begin{align*}
   \Delta(\by,\bgrande\bpiccola) 
   := \frac{ \prod_{i=1}^\ngrande \un_{\grande_i\in\BallGyR} \, 
             \prod_{k=1}^\npiccola \un_{\piccola_i\in\BallPyR} }{Z_{B(0,R)}(\by)} 
      -\frac{ e^{-\sum_{i=1}^\ngrande \psiGyR(\grande_i) } \, 
              e^{-\sum_{k=1}^\npiccola \psiPyR(\piccola_k)} }{\Zmixing^\by_R}.
\end{align*} 
Using the fact that $|F^\by_\Theta(\bgrande\bpiccola)|$ is bounded by $1$, we get
\begin{align*} 
  &\mathbf{d}_{TV}(R)\\
  &\le \int_\D 
      \sum_{\ngrande=0}^{\infty} \frac{\zgrande^\ngrande}{\ngrande !}
      \sum_{\npiccola=0}^{\infty} \frac{\zpiccola^\npiccola}{\npiccola !}
      \int_{\R^{\ngrande d}} \int_{\R^{\npiccola d}}
      \un_{\D}(\bgrande\bpiccola) \, \abs{\Delta(\by,\bgrande\bpiccola)}
      \otimes_{k=1}^\npiccola d\piccola_k \otimes_{i=1}^\ngrande d\grande_i \, \mu(d\by).
\end{align*} 
Note that the terms are well-defined even for $\ngrande=0$ and/or $\npiccola=0$, with the convention that $\prod_{i=1}^0 \cdots =1$ and $\sum_{i=1}^0 \cdots =0$.
We split the difference $\Delta(\by,\bgrande\bpiccola)$ in two: 
\begin{align*} 
  &\Big| \Delta(\by,\bgrande\bpiccola) \Big|\\
  &\le \prod_{i=1}^\ngrande \un_{\grande_i\in\BallGyR} \, 
       \prod_{k=1}^\npiccola \un_{\piccola_i\in\BallPyR}
       \Bigg| \frac{1}{Z_{B(0,R)}(\by)} -\frac{1}{\Zmixing^\by_R}  \Bigg| \\
  & \quad +\frac{1}{\Zmixing^\by_R} 
           \Bigg| \prod_{i=1}^\ngrande \un_{\grande_i\in\BallGyR} \, 
                  \prod_{k=1}^\npiccola \un_{\piccola_i\in\BallPyR} 
                  - e^{-\sum_{i=1}^\ngrande \psiGyR(\grande_i) } \, 
                    e^{-\sum_{k=1}^\npiccola \psiPyR(\piccola_k)} \Bigg|.
\end{align*}
Since $\psiGyR$ vanishes on $\BallGyR$ and $\psiPyR$ vanishes on $\BallPyR$, the argument of the second absolute value is negative, since
\begin{equation*}
   \prod_{i=1}^\ngrande \un_{\grande_i\in\BallGyR} \,
   \prod_{k=1}^\npiccola \un_{\piccola_i\in\BallPyR}
   \le e^{-\sum_{i=1}^\ngrande \psiGyR(\grande_i) } \, 
             e^{-\sum_{k=1}^\npiccola \psiPyR(\piccola_k)},    
\end{equation*}
and the argument of the first absolute value in the bound of 
$\abs{\Delta(\by,\bgrande\bpiccola)}$ is positive, since
\begin{align*}
&Z_{B(0,R)}(\by)\\
& = \sum_{\ngrande=0}^{+\infty} \frac{\zgrande^\ngrande}{\ngrande!} \,                
    \sum_{\npiccola=0}^{+\infty} \frac{\zpiccola^\npiccola}{\npiccola!} 
    \int_{\R^{\ngrande d}} \int_{\R^{\npiccola d}}
    \prod_{i=1}^\ngrande \un_{\grande_i\in\BallGyR} \,
    \prod_{k=1}^\npiccola \un_{\piccola_i\in\BallPyR} \,
    \un_{\D}(\bgrande\bpiccola) \, 
    \otimes_{i=1}^\ngrande d\grande_i \otimes_{k=1}^\npiccola d\piccola_k \\
&\le \sum_{\ngrande=0}^{+\infty} \frac{\zgrande^\ngrande}{\ngrande!} \,                
     \sum_{\npiccola=0}^{+\infty} \frac{\zpiccola^\npiccola}{\npiccola!} \,
     \int_{\R^{\ngrande d}} \int_{\R^{\npiccola d}} \un_{\D}(\bgrande\bpiccola) \, 
     \otimes_{i=1}^\ngrande \lambdaGyR(d\grande_i) \otimes_{k=1}^\npiccola \lambdaPyR(d\piccola_k) 
     = \Zmixing^\by_R.
\end{align*} 
Consequently, the first part in the upper bound of $\abs{\Delta(\by,\bgrande\bpiccola)}$ yields $Z_{B(0,R)}(\by)$ and the second part yields $\Zmixing^\by_R - Z_{B(0,R)}(\by)$\,:
\begin{align} 
& \mathbf{d}_{TV}(R) \\
% &\le \int_\D  
%      \sum_{\ngrande=0}^{+\infty} \frac{\zgrande^\ngrande}{\ngrande!} \, 
%      \sum_{\npiccola=0}^{+\infty} \frac{\zpiccola^\npiccola}{\npiccola!} \\
% &\phantom{\le \int_\D}
%      \int_{(\BallGyR)^\ngrande} \int_{(\BallPyR)^\npiccola} \un_{\D}(\bgrande\bpiccola) 
%      \Bigg( \frac{1}{Z_{B(0,R)}(\by)} -\frac{1}{\Zmixing^\by_R} \Bigg) 
%      \otimes_{k=1}^\npiccola d\piccola_k \otimes_{i=1}^\ngrande d\grande_i \,  \\    
% &\phantom{\leq}  
%      +\frac{1}{\Zmixing^\by_R} \int_{\R^{\ngrande d}} \int_{\R^{\npiccola d}}
%       \un_{\D}(\bgrande\bpiccola) 
%       \Bigg( e^{-\sum_{i=1}^\ngrande \psiGyR(\grande_i) } \, 
%              e^{-\sum_{k=1}^\npiccola \psiPyR(\piccola_k)}\\
% &\phantom{\leq +\frac{1}{\Zmixing^\by_R} \int_{\R^{\ngrande d}} \int_{\R^{\npiccola d}}\un_{\D}(\bgrande\bpiccola) \Bigg(\quad}
%              -\prod_{i=1}^\ngrande \un_{\grande_i\in\BallGyR} \,
%              \prod_{k=1}^\npiccola \un_{\piccola_i\in\BallPyR} \Bigg)\\
%     & \phantom{\leq +\frac{1}{\Zmixing^\by_R} \int_{\R^{\ngrande d}} \int_{\R^{\npiccola d}}\un_{\D}(\bgrande\bpiccola) \Bigg(\qquad\qquad}
%     \otimes_{k=1}^\npiccola d\piccola_k \otimes_{i=1}^\ngrande d\grande_i \, 
%     \mu(d\by) \\
&\le \int_\D  
     \Bigg( \frac{1}{Z_{B(0,R)}(\by)} -\frac{1}{\Zmixing^\by_R} \Bigg)       Z_{B(0,R)}(\by)
     +\frac{1}{\Zmixing^\by_R} 
      \Bigg( \Zmixing^\by_R - Z_{B(0,R)}(\by) \Bigg) \, 
    \mu(d\by) \nonumber \\ 
&\le 2\int_\D  
     \frac{ \Zmixing^\by_R - Z_{B(0,R)}(\by) }{\Zmixing^\by_R} \, \mu(d\by).\label{eq:dTVR}
\end{align}
To prove \eqref{eq:BorelCantelliTV}, we only have to find a summable upper bound for the above integral. 
Since $\psiGyR(\grande_i)=0$ for $\grande_i\in\BallGyR$\,, and $\psiPyR(\piccola_k)=0$ for $\piccola_k\in\BallPyR$\,, we have
\begin{equation*}
\begin{split}
    &1 - e^{\sum_{i=1}^\ngrande \psiGyR(\grande_i) } \, 
       e^{\sum_{k=1}^\npiccola \psiPyR(\piccola_k)}
       \prod_{i=1}^\ngrande \un_{\grande_i\in\BallGyR} \,
       \prod_{k=1}^\npiccola \un_{\piccola_i\in\BallPyR}\le \sum_{i=1}^\ngrande \un_{\grande_i\notin\BallGyR} 
             +\sum_{k=1}^\npiccola \un_{\piccola_i\notin\BallPyR}.  
\end{split} 
\end{equation*}
Using the exchangeability of the spheres and particles, we obtain
\begin{align*} 
 & \Zmixing^\by_R - Z_{B(0,R)}(\by) \\
 & \le \sum_{\ngrande=0}^{+\infty} \frac{\zgrande^\ngrande}{\ngrande!} \,                
       \sum_{\npiccola=0}^{+\infty} \frac{\zpiccola^\npiccola}{\npiccola!} \,
       \int_{\R^{\ngrande d}} \int_{\R^{\npiccola d}} \un_{\D}(\bgrande\bpiccola) \, 
       \Bigg( \ngrande \un_{\grande_\ngrande \notin\BallGyR} 
             +\npiccola \un_{\piccola_\npiccola \notin\BallPyR} \Bigg)
       \otimes_{i=1}^\ngrande \lambdaGyR(d\grande_i) \otimes_{k=1}^\npiccola \lambdaPyR(d\piccola_k).
\end{align*}       
Since $\un_{\D}(\grande_1 \dots \grande_\ngrande ~ \piccola_1 \dots \piccola_\npiccola)
 \le \un_{\D}(\grande_1 \dots \grande_{\ngrande-1} ~ \piccola_1 \dots \piccola_\npiccola)$, and analogously for the particles,
\begin{align*} 
 & \Zmixing^\by_R - Z_{B(0,R)}(\by) \\  
 & \le \zgrande \sum_{\ngrande=1}^{+\infty} \frac{\zgrande^{\ngrande-1}}{(\ngrande-1)!} \,   
                \sum_{\npiccola=0}^{+\infty} \frac{\zpiccola^\npiccola}{\npiccola!} \,
       \int_{\R^{(\ngrande-1) d}} \int_{\R^{\npiccola d}} \un_{\D}(\bgrande\bpiccola) 
       \otimes_{i=1}^{\ngrande-1} \lambdaGyR(d\grande_i) 
       \otimes_{k=1}^\npiccola \lambdaPyR(d\piccola_k) ~
       \lambdaGyR((\BallGyR)^c) \\
 &\quad   
       + \zpiccola \sum_{\ngrande=0}^{+\infty} \frac{\zgrande^\ngrande}{\ngrande!} \,                
                   \sum_{\npiccola=1}^{+\infty} \frac{\zpiccola^{\npiccola-1}}{(\npiccola-1)!} \,
         \int_{\R^{\ngrande d}} \int_{\R^{(\npiccola-1) d}} \un_{\D}(\bgrande\bpiccola)        
         \otimes_{i=1}^\ngrande \lambdaGyR(d\grande_i) 
         \otimes_{k=1}^{\npiccola-1} \lambdaPyR(d\piccola_k) ~
         \lambdaPyR((\BallPyR)^c) \\  
 & \le \Zmixing^\by_R \, \Big( \zgrande \lambdaGyR((\BallGyR)^c) 
                               + \zpiccola \lambdaPyR((\BallPyR)^c) \Big).
\end{align*}
Injecting this in~\eqref{eq:dTVR} yields
\begin{align}\label{eq:TotalVariationUpperBound}
\mathbf{d}_{TV}(R)  
&\le 2\int_\D \zgrande \lambdaGyR((\BallGyR)^c) 
              + \zpiccola \lambdaPyR((\BallPyR)^c) \, \mu(d\by).
\end{align}
Finally, by assumption~\eqref{eq:psiGR}, we have that
$\sum_R \mathbf{d}_{TV}(R) <+\infty$,
which in turn implies that $ \int_\D P((\Omega^\by)^c) \mu(d\by) = 0 $, concluding the proof of Proposition~\ref{prop:FullMeasure}.

\medskip

As a result, the solution $X^{\by,\infty}(\omega,\cdot)$ of~\eqref{eq:SDE2infty} constructed in Section \ref{sec:ball-separation} as a limit process exists for $\mu$-almost every $\omega$, for any fixed $\by$ in the interior of $\D$.

\begin{remark}\label{rmk:uniqueness}
The process $X^{\by,\infty}(\omega,\cdot)$ is the unique solution of~\eqref{eq:SDE2infty} in the sense of~\cite[Lemma 5.4]{TanemuraEDS}. 
This means that $X^{\by,\infty}$ coincides $\mu$-a.s. with any solution $X$ of~\eqref{eq:SDE2infty} starting from $\by$ in the class of paths for which:
there exists $\eps>0$ and a rate $p\in\N^*$ such that,
for all $\rho>0$ and infinitely many $m\in\N^*$, 
there exists a finite sequence of rational times $t_0=0<t_1<\cdots<t_{m'-1}<t_{m'}=1$, with $t_{a+1}-t_a \le \frac{1}{m}$, and a sequence of bounded sets $C_0,\ldots C_{m'-1}$ in $\R^d$, with $B(0,\rho+m)\subset C_{m'-1}\subset\dots\subset C_0\subset B(0,\rho+m+m^p)$, with $B(C_{a+1},\eps) \subset C_a$ for each $a$, such that, for every $a\in\{0,\ldots,m'-1\}$, the boundary of $C_a$ separates balls in such a way that they do not interact:
\begin{align*} 
    &\forall a\in\{0,1,\ldots,m'-1\}, \quad 
    \forall t\in\left[\frac{a}{m},\frac{a+1}{m}\right ], \\
    &\qquad \inf\{|\Grande_i(t)-x|: ~i\in\N^*,~ x\in\partial C_a \} > \rgrande+\frac{\eps}{4}, \\
    &\qquad \text{and }  
    \inf\{|\Piccola_k(t)-x|: ~k\in\N^*,~ x\in\partial C_a \} > \rpiccola+\frac{\eps}{4} .
\end{align*}   
That the limit solution $X^{\by,\infty}$ belongs to this class follows from the computations of Section~\ref{sec:ball-separation}. 
Any solution belonging to this class inherits the uniqueness property of the finite SDE, since every sphere or particle belongs on successive time intervals to finite sets of interacting balls, hence they coincide with $X^{\by,\infty}(\omega,\cdot)$.
\end{remark}

\subsection{Reversibility of the limit under Gibbs measures}\label{sec:reversibility}
%%%%%%%%%%%%%%%%%%%%%%%%%%%%%%%%%%%%%%%%%%%%%%%%%%%%%%%%%%%%%%%%%%%%%%%%%%%%

To complete the proof of Theorem~\ref{theorem:SDE2infty}, we only have to note that the limit process $X^{\by,\infty}(\omega,\cdot)$, constructed in Sections~\ref{sec:ball-separation} and~\ref{sec:almost-sure} for $\mu$-almost every $\omega$, admits $\mu$ as a reversible measure, for any Gibbs measure $\mu\in\Gcal_{\zgrande,\zpiccola}$.

This is a straightforward consequence of the reversibility of the penalised processes under the approximate measures $Q^\by_R$.
Indeed, for any time $T$ in $[0,1]$, any subdivision $0\leq t_1 < \dots <  t_j \leq T$, and any bounded continuous local functions $F_1, \dots, F_j$ on $\M$, we have
\begin{align*} 
 & \left| \int_\D E\bigg( \prod_{i=1}^j F_i\big( X^{\by,\infty}(T-t_i) \big) 
                          -\prod_{i=1}^j F_i\big( X^{\by,\infty}(t_i) \big) 
                   \bigg) \mu(d\by) \right| \\
& = \lim_{R\to+\infty} \left| \int_\D E\bigg( 
    \prod_{i=1}^j F_i\big( X^{\by,R}(T-t_i) \big) 
    -\prod_{i=1}^j F_i\big( X^{\by,R}(t_i) \big) \bigg) \mu(d\by) \right| \\
 & \le \lim_{R\to+\infty} 2 \prod_{i=1}^j \|F_j\|_\infty ~ \mathbf{d}_{TV}(R) =0,
\end{align*} 
since, thanks to the reversibility of $X^{\by,R}$ starting from $\mu^\by_R$,
\begin{equation*}
    \int_\D E\bigg( 
    \prod_{i=1}^j F_i\big( X^{\by,R}(T-t_i) \big) 
    -\prod_{i=1}^j F_i\big( X^{\by,R}(t_i) \big) \bigg) \mu^\by_R(d\by) =0. 
\end{equation*}

%%%%%%%%%%%%%%%%%%%%%%%%%%%%%%%%%%%%%%%%%%%%%%%%%%%%%%%%%%%%%%%%%%%%%%%%%%%%%%%%%%%%
\section{Occurrence of a depletion interaction between hard spheres}\label{sec:depletion}
%%%%%%%%%%%%%%%%%%%%%%%%%%%%%%%%%%%%%%%%%%%%%%%%%%%%%%%%%%%%%%%%%%%%%%%%%%%%%%%%%%%%

In this section, we study the emergence of an attractive interaction, the \emph{depletion interaction}, between hard spheres, due to the presence of the particle medium. We first identify the projection $\mugrande$ of the reversible measure $\mu$ introduced in~\eqref{eq:mu}, as a Gibbs measure with a new effective interaction. We then construct a gradient dynamics whose reversible measure is given by $\mugrande$. Finally, in Sections~\ref{sec:percolation} and~\ref{sec:packing}, we study the properties of the Gibbs measures $\mugrande$ in the low- and high-activity regimes, respectively.

\subsection{The projection of the two-type reversible measure}\label{sec:trace}
%%%%%%%%%%%%%%%%%%%%%%%%%%%%%%%%%%%%%%%%%%%%%%%%%%%%%%%%%%%%%%%%%%%%%%%%%%%%

We study here the projection of the reversible measure $\mu\in\Gcal_{\zgrande,\zpiccola}$ onto the subsystem of hard spheres.
For any finite configuration $\bgrande=\{\grande_1,\dots,\grande_n\}$, $n\geq 1$, of hard spheres, consider the energy
\begin{equation}
    \nrg (\bgrande) := \displaystyle \vol{\BGrande(\bgrande)} = \displaystyle \vol{\bigcup_{i=1}^{n}B(\grande_i,\rdep) }.
\end{equation}
We denote by $\Gcal_{\zgrande}(\zpiccola\nrg)$ the set of Gibbs measures associated to $\nrg$ with activity $\zgrande$ and inverse temperature $\zpiccola$, that is the set of measures solutions of
\begin{equation} \label{eq:mugrande}
\begin{split}
    &\int_\D F(\bgrande) \, \mugrande(d\bgrande) = \int_\D \frac{1}{\Zgrande_\Lambda(\bgrandey)} \int_{\MGrande} 
    F(\bgrandey_{\Lambda^c} \bgrande) \, 
    e^{-\zpiccola\nrg_\Lambda(\bgrande\bgrandey_{\Lambda^c})}\un_{\D}(\bgrandey_{\Lambda^c} \bgrande) \, 
    {\pi^\zgrande_\Lambda}(d\bgrande) \, \mugrande(d\bgrandey),
\end{split}
\end{equation}
for any $\Lambda\subset\R^d$ and any positive bounded measurable $F$ on $\MGrande$, where 
\begin{equation}\label{eq:conditional_en}
    \nrg_\Lambda (\bgrande) := \displaystyle \vol{\BGrande(\bgrande_\Lambda)\setminus \BGrande(\bgrande_{\Lambda^c})} = \displaystyle \vol{\bigcup_{\grande\in\bgrande_\Lambda}B(\grande,\rdep)\setminus \bigcup_{\grande\in\bgrande_{\Lambda^c}}B(\grande,\rdep)}.
\end{equation}
is the conditional energy in $\Lambda$ for (infinite) configurations of hard spheres $\bgrande\in\MGrande$.
\begin{remark}
    It is worth noting that several choices of conditional energy lead to the same set of Gibbs measures. This is the case, for example, for $\tilde\nrg_\Lambda (\bgrande) := \displaystyle \vol{\BGrande(\bgrande)\cap\Lambda}$, since it differs from $\nrg_\Lambda$ only for factors that only depend on the external configuration. As was the case for $\Gcal_{\zpiccola,\zgrande}$, existence of at least one element of $\Gcal_{\zgrande}(\zpiccola\nrg)$, for any value of $\zgrande,\zpiccola>0$, is a classical result. 
\end{remark}
Similarly to the Widom--Rowlinson model~\cite{widom1970new}, the following correspondence between two-type and one-type models holds, where the activity $\zpiccola$ of the particles plays the role of \emph{inverse temperature} for the effective interaction between hard spheres.
\begin{proposition}\label{prop:tracemesureequ}
    For any $\zgrande,\zpiccola>0$, 
    \begin{itemize}
        \item If $\mu\in\Gcal_{\zpiccola,\zgrande}$ is a Gibbs measure on $\M$, then its marginal on $\MGrande$ is a Gibbs measure $\mugrande_\zpiccola\in\Gcal_{\zgrande}(\zpiccola\nrg)$.
        \item If $\mugrande\in\Gcal_{\zgrande}(\zpiccola\nrg)$ is a Gibbs measure on $\MGrande$, then $\mugrande(d\bgrande)\otimes\pi^{\zpiccola}_{\R^d\setminus\BGrande(\bgrande)}(d\bpiccola)\in\Gcal_{\zgrande,\zpiccola}$ is a Gibbs measure on $\M$.
    \end{itemize}
\end{proposition}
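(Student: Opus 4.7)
The plan is to work at the level of the Georgii--Nguyen--Zessin (GNZ) integration-by-parts formulas, which for the locally stable hard-core interactions considered here are equivalent to the DLR equations via the Nguyen--Zessin theorem. The cornerstone will be the product decomposition
\[
\mu(d\bgrande\, d\bpiccola) = \mugrande(d\bgrande) \otimes \pi^{\zpiccola}_{\R^d\setminus\BGrande(\bgrande)}(d\bpiccola),
\qquad \mu\in\Gcal_{\zgrande,\zpiccola},
\]
i.e., conditionally on the sphere configuration the particles form a Poisson process of intensity $\zpiccola$ on the complement of the depletion region. To establish it, I would first derive from the DLR~\eqref{eq:mu} the particle GNZ identity
\[
\int \sum_{\piccola\in\bpiccola} g(\piccola,\bx\setminus\piccola)\, \mu(d\bx) = \zpiccola \int d\piccola \int g(\piccola,\bx)\,\un_{\piccola\notin\BGrande(\bgrande)}\,\mu(d\bx),
\]
by injecting Mecke's formula for the reference Poisson process $\pi^{\zpiccola}_\Lambda$ into~\eqref{eq:mu} (with the $\mu$-a.s.\ identity $\un_\D(\bx\cup\piccola) = \un_\D(\bx)\un_{\piccola\notin\BGrande(\bgrande)}$) and letting $\Lambda\uparrow\R^d$. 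Specialising to $g(\piccola,\bx)=G(\bgrande)g_0(\piccola,\bpiccola)$ and disintegrating $\mu$ as $\mugrande(d\bgrande)\,\kappa(\bgrande, d\bpiccola)$, the identity becomes, for $\mugrande$-a.e.\ $\bgrande$, Mecke's equation for the kernel $\kappa(\bgrande,\cdot)$ with intensity $\zpiccola\un_{\R^d\setminus\BGrande(\bgrande)}$; Mecke's characterisation of Poisson processes then identifies $\kappa(\bgrande,\cdot)$ with $\pi^{\zpiccola}_{\R^d\setminus\BGrande(\bgrande)}$.

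For the first bullet, I would next apply the same extraction to sphere-additive functionals, obtaining the sphere GNZ equation for $\mu$. For $f$ depending only on spheres the left-hand side equals $\int\sum_\grande f(\grande,\bgrande\setminus\grande)\,\mugrande(d\bgrande)$, while on the right-hand side the $\mu$-a.s.\ factorisation $\un_{\bx\cup\grande\in\D} = \un_{\bgrande\cup\grande\in\D}\,\un_{\bpiccola\cap B(\grande,\rdep)=\emptyset}$ together with the product decomposition and the Poisson avoidance identity $\int \un_{\bpiccola\cap B(\grande,\rdep)=\emptyset}\,\pi^{\zpiccola}_{\R^d\setminus\BGrande(\bgrande)}(d\bpiccola) = \exp(-\zpiccola|B(\grande,\rdep)\setminus\BGrande(\bgrande)|)$ lead to
\[
\int \sum_\grande f(\grande,\bgrande\setminus\grande)\,\mugrande(d\bgrande) = \zgrande\int d\grande \int f(\grande,\bgrande)\,\un_{\bgrande\cup\grande\in\D}\,e^{-\zpiccola|B(\grande,\rdep)\setminus\BGrande(\bgrande)|}\,\mugrande(d\bgrande).
\]
Since the local increment $|B(\grande,\rdep)\setminus\BGrande(\bgrande)|$ coincides with $\nrg_\Lambda(\bgrande\cup\grande)-\nrg_\Lambda(\bgrande)$ for every bounded $\Lambda\ni\grande$, this is exactly the GNZ equation characterising $\Gcal_\zgrande(\zpiccola\nrg)$; invoking the Nguyen--Zessin equivalence concludes $\mugrande\in\Gcal_\zgrande(\zpiccola\nrg)$.

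For the converse bullet, the plan is to define $\mu := \mugrande(d\bgrande)\otimes\pi^{\zpiccola}_{\R^d\setminus\BGrande(\bgrande)}(d\bpiccola)$ and verify both GNZ identities for $\mu$ by reversing the above computations: the particle GNZ comes directly from Mecke's formula for the conditional Poisson process, while the sphere GNZ follows from the GNZ equation of $\mugrande\in\Gcal_\zgrande(\zpiccola\nrg)$ after rewriting the exponential factor as a Poisson avoidance probability. The principal difficulty I anticipate lies in matching the two specifications: the finite-volume DLR~\eqref{eq:mugrande} uses $\nrg_\Lambda(\bgrande)=|\BGrande(\bgrande_\Lambda)\setminus\BGrande(\bgrande_{\Lambda^c})|$, whose boundary contributions (the parts of $\BGrande(\bgrande_\Lambda)$ protruding into $\Lambda^c$) are not directly produced by integrating out particles in $\Lambda$ in the two-type DLR. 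Operating at the level of GNZ, which only sees the intrinsic, $\Lambda$-free local increment $|B(\grande,\rdep)\setminus\BGrande(\bgrande)|$, is precisely what bypasses this mismatch.
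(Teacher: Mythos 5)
Your plan is correct, but it goes through a genuinely different machinery than the paper. The paper works \emph{directly} with the finite-volume DLR equations~\eqref{eq:mu} and~\eqref{eq:mugrande}: for the direction $\mugrande\mapsto\mu$, it plugs the DLR for $\mugrande$ into the definition of $\mu=\mugrande\otimes\pi^{\zpiccola}_{\R^d\setminus\BGrande(\bgrande)}$, recognises the exponential factor $e^{-\zpiccola\vol{\BGrande(\bgrande'_\Lambda)\cup\BGrande(\bgrande_{\Lambda^c})}}$ as the Poisson void probability for $\pi^\zpiccola_\Lambda\pi^\zpiccola_{\Lambda^c}$, and absorbs the $\Lambda$-boundary factors (the very ones you flag as worrisome, such as $e^{\zpiccola\vol{\BGrande(\bgrande_{\Lambda^c})}}$) into the partition function $Z_\Lambda(\by)$; for the direction $\mu\mapsto\mugrande$, it integrates out the particles in~\eqref{eq:mu} via the avoidance probability and again normalises. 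Your proposal instead lifts everything to the $\Lambda$-free level of GNZ/Mecke identities and invokes the Nguyen--Zessin equivalence at the end, having first established the conditional-Poisson decomposition via Mecke's characterisation theorem. What you gain: no finite volume, no partition functions, and — as you correctly note — the fact that GNZ only sees the intrinsic increment $\vol{B(\grande,\rdep)\setminus\BGrande(\bgrande)}$ dissolves the apparent mismatch with the $\Lambda$-tied specification $\nrg_\Lambda$ (whereas the paper must absorb that mismatch into $Z_\Lambda$). What this costs: you are invoking the Nguyen--Zessin equivalence as a black box, which means checking its hypotheses (here: heredity of $\un_\D$, finite range $2\rdep$ of $\nrg$, and local stability $0\le\nrg(\bgrande)\le\sharp\bgrande\,\volball_d\rdep\rspace^d$ — all satisfied but worth stating), plus a separability argument in the disintegration step $\mu(d\bx)=\mugrande(d\bgrande)\,\kappa(\bgrande,d\bpiccola)$ to pass from GNZ to ``Mecke holds for $\mugrande$-a.e.\ $\bgrande$ simultaneously for all test functions.'' One more detail worth making explicit when you reverse the argument: on the left-hand side of the sphere GNZ for $\mu:=\mugrande\otimes\pi^\zpiccola_{\R^d\setminus\BGrande(\bgrande)}$, removing a sphere $\grande$ from $\bgrande$ also shrinks the depletion region, so $\pi^\zpiccola_{\R^d\setminus\BGrande(\bgrande)}$ must be related to $\pi^\zpiccola_{\R^d\setminus\BGrande(\bgrande\setminus\grande)}$ through the restriction identity $\un_{\bpiccola\cap B(\grande,\rdep)=\emptyset}\,\pi^\zpiccola_{\R^d\setminus\BGrande(\bgrande\setminus\grande)}(d\bpiccola)=e^{-\zpiccola\vol{B(\grande,\rdep)\setminus\BGrande(\bgrande\setminus\grande)}}\,\pi^\zpiccola_{\R^d\setminus\BGrande(\bgrande)}(d\bpiccola)$; once written, the identity slots exactly into the GNZ for $\mugrande$ via the auxiliary function $\tilde f(\grande,\bgrande):=\int f(\grande,\bgrande,\bpiccola)\,\pi^\zpiccola_{\R^d\setminus\BGrande(\bgrande\cup\grande)}(d\bpiccola)$.
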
 
The proof is relatively straightforward, but it is still worthwhile to include here. Indeed, in the Widom--Rowlinson model this result is folklore, and as such a rigorous proof using the DLR formalism is not easily found. Moreover, as far as we are aware, this is the first time this argument is extended to this setting where there is an additional hard-core interaction. 

\begin{proof}

We first show that if $\mugrande$ satisfies the DLR equations~\eqref{eq:DLR_dep} for $\nrg$, then $\mu:=\mugrande\otimes\pi^{\zpiccola}_{\R^d\setminus\BGrande(\bgrande)}$ satisfies the DLR equations~\eqref{eq:mu}.
For any positive bounded measurable $F$ on $\D$, we have
\begin{equation*}
\begin{split}
    &\int F(\bx)\mu(d\bx):= \iint F(\bpiccola\bgrande)\, \pi^{\zpiccola}_{\R^d\setminus\BGrande(\bgrande)}(d\bpiccola)\, \mugrande(d\bgrande) \\
    &= \iiint F(\bpiccola'_\Lambda\bgrande'_\Lambda\bpiccola_{\Lambda^c}\bgrande_{\Lambda^c})\frac{e^{-\zpiccola\vol{\BGrande(\bgrande'_\Lambda)\setminus \BGrande(\bgrande_{\Lambda^c})}}}{\Zgrande_\Lambda(\bgrande_{\Lambda^c})}\pi^{\zpiccola}_{\BGrande(\bgrande'_\Lambda\bgrande_{\Lambda^c})^c}(d\bpiccola)
    \un_{\D}(\bgrande'_\Lambda\bgrande_{\Lambda^c}) \pi^{\zgrande}_\Lambda(d\bgrande'_{\Lambda})\mugrande(d\bgrande_{\Lambda^c}).
\end{split}
\end{equation*}
Writing $e^{-\vol{\BGrande(\bgrande'_\Lambda)\setminus \BGrande(\bgrande_{\Lambda^c})}} = e^{-\vol{\BGrande(\bgrande'_\Lambda)\cup \BGrande(\bgrande_{\Lambda^c})}}\, e^{ \vol{\BGrande(\bgrande_{\Lambda^c})}}$, and absorbing the second term into $Z_\Lambda(\bx_{\Lambda^c})$, we have that this is precisely the Poisson void probability for $\pi^\zpiccola_\Lambda\pi^\zpiccola_{\Lambda^c}$, so that we find
\begin{equation*}
\begin{split}
    &\int F(\bx)\mu(d\bx) = \int\frac{1}{Z_\Lambda(\bx_{\Lambda^c})}\left(\iint F(\bx'_\Lambda\bx_{\Lambda_c}) \un_\D(\bx'_{\Lambda}\bx_{\Lambda^c})\pi_\Lambda^\zpiccola(d\bpiccola')\pi_\Lambda^\zgrande(\bgrande')\right) \mu(d\bx).
\end{split}
\end{equation*} 
Conversely, let $\mu\in\Gcal_{\zpiccola,\zgrande}$. Its marginal on the hard spheres is characterised by integration over test functions $F$ supported on $\MGrande$, that is:
$\int_{\MGrande} F(\bgrande)\mugrande_\zpiccola(d\bgrande):= \int_\M F(\bgrande)\mu(d\bgrande\bpiccola)$.
The DLR equations for $\mu$ then yield
\begin{equation*}
\begin{split}
    &\int F(\bgrande)\mu(d\bgrande\bpiccola) \\
    &= \int \frac{1}{Z_\Lambda(\by)} \int_{\MGrande} \int_{\MPiccola} 
    F(\bgrandey_{\Lambda^c} \bgrande) \, 
    \un_{\D}(\bgrandey_{\Lambda^c}\bpiccolay_{\Lambda^c} \bgrande\bpiccola) \, 
    \pi^\zpiccola_{\Lambda}(d\bpiccola)  \, \pi^\zgrande_{\Lambda}(d\bgrande) \, \mu(d\by)\\
    &= \int \frac{1}{Z_\Lambda(\by)} \int_{\MGrande} \int_{\MPiccola} 
    F(\bgrandey_{\Lambda^c} \bgrande) \, 
    \un_{\D}(\by_{\Lambda^c} \bgrande\bpiccola) \, 
    \pi^\zpiccola_{\Lambda}(d\bpiccola)  \, \pi^\zgrande_{\Lambda}(d\bgrande) \, \mu(d\by)\\
    &= \int \frac{1}{\Zgrande_\Lambda(\bgrandey)} \int_{\MGrande}
    F(\bgrandey_{\Lambda^c} \bgrande) \, 
    \exp\big(-\zpiccola\nrg_\Lambda(\bgrande\bgrandey_{\Lambda^c})\big)\un_{\D}(\bgrandey_{\Lambda^c}\bgrande) \, 
    \pi^\zgrande_{\Lambda}(d\bgrande) \, \mu(d\bgrandey\bpiccolay),
\end{split}
\end{equation*}
where at each step we absorbed the factors constant in $\bgrande$ into $Z_\Lambda(\by)$, e.g.\ $e^{\zgrande\vol{\BGrande(\bpiccolay_{\Lambda^c})\cap\Lambda}}$. This finally leads to the new partition function 
\begin{equation*}
    \Zgrande_\Lambda(\bgrandey)\equiv \Zgrande_\Lambda(\bgrandey_{\Lambda^c}) =\int_{\MGrande}\exp\big(-\zpiccola\nrg_\Lambda(\bgrande\bgrandey_{\Lambda^c})\big)\un_{\D}(\bgrandey_{\Lambda^c} \bgrande) \, {\pi^\zgrande_\Lambda}(d\bgrande),
\end{equation*}
concluding the proof.
\end{proof}

\subsection{An associated gradient dynamics}\label{sec:depletionSDE}
%%%%%%%%%%%%%%%%%%%%%%%%%%%%%%%%%%%%%%%%%%%%%%%%

We are interested here in constructing a diffusive dynamics for infinitely many hard spheres which has~\eqref{eq:mugrande} as reversible measure.

It is a simple geometric fact (see, e.g.~\cite{LT11}) that, if $\rpiccola/\rgrande\leq\frac{2}{3}\sqrt{3}-1=:\rho_2 \simeq 0.15$, the interaction $\nrg$ between any finite number $n\geq 1$ of hard spheres $\bgrande=\{\grande_1,\dots,\grande_n\}$ reduces to 
\begin{equation*}
    \nrg (\bgrande) = n \volball_d \,  \rdep\rspace ^d  - \sum_{1\leq i<j\leq n}  \Vovlap\Big(\frac{|\grande_i - \grande_j|}{2 \rdep}\Big),
\end{equation*}
where, for $u= \frac{|x_i - x_j|}{ 2 \rdep} \in \Big[\frac{\rgrande}{\rdep},+\infty \Big)$,
\begin{equation*}
    \Vovlap(u) =  
    2\,\volball_{d-1} ~ \rdep\rspace ^d \int_0^{\arccos(u)}  (\sin\theta)^d \,d\theta 
    \ \un_{[\rgrande/\rdep,1]} (u) := - \phidep_2(x_{i},x_{j})
\end{equation*}
is an attractive two-body potential, which only depends on the distance between points.
It decreases from its maximal value $\Vovlap^*$, attained at $u= \frac{\rgrande}{\rdep}$, to its minimal value $0$, attained at $u=1$, and vanishes on $[1,+\infty)$.
With this notation, the conditional energy~\eqref{eq:conditional_en} can be rewritten as
\begin{equation*}
    \nrg_\Lambda (\bgrande_\Lambda\bgrandey_{\Lambda^c}) 
    = \volball_d \,  \rdep\rspace ^d ~\sharp\bgrande_\Lambda
      + \sum_{\grande_i,\grande_j}\phi_2(\grande_i,\grandey_j) 
      + \sum_{\grande_i,\grandey_j}\phi_2(\grande_i,\grandey_j).
\end{equation*}
Note that $\Vovlap$ is $\C^2$ for $d\geq 3$. Indeed, its derivatives are given, for $\frac{1}{1+\rho} <u< 1$, by
\begin{align*}
    &\Vovlap'(u) = 2\volball_{d-1} ~ \rdep\rspace ^{d} \arccos'(u) \left( \sin\big(\arccos(u)\big) \right)^d 
    = -2\volball_{d-1} ~ \rdep\rspace ^{d} \left( 1-u^2\right)^{\frac{d-1}{2}},\\
    &\Vovlap''(u) = 2(d-1) \volball_{d-1} ~ \rdep\rspace ^{d} u \left( 1-u^2\right)^{\frac{d-3}{2}} = -(d-1) ~ \frac{ u~\Vovlap'(u) }{1-u^2}.
\end{align*}
This allows us to show, using~\cite[Theorem 1.4 and Remark 1.2]{fradon_roelly_tanemura_2000},
existence and uniqueness of a strong solution to the corresponding diffusive dynamics of the infinitely many hard spheres $(\Grande_i )_{i\in\N^*}$ submitted to ($\zpiccola$-times) this gradient field:
\begin{equation}\tag{${\mathcal{S}}^\textrm{dep}$}\label{eq:Sdep_pair}
\begin{cases}
\begin{array}{l}
    \textrm{for any } i\in\N^*, \, t \in [0,1] ,                             \\ 
    d \Grande_i (t) =
    d \WGrande_i(t)\\
    \phantom{d \Grande_i (t) =}
    - \disp \frac{\zpiccola}{2} \,
    \volball_{d-1} ~ \rdep\rspace ^{d-1} 
    \sum_{j=1}^{+\infty} \Big( 1-\frac{|\Grande_i -\Grande_j|^2(t)}{4\rdep\rspace ^2} \Big)_{\hspace{-1mm}+}^{\hspace{-1mm}\frac{d-1}{2}} 
                          \frac{\Grande_i -\Grande_j}{|\Grande_i -\Grande_j|}(t)\, d t 
                          \\
    \phantom{d \Grande_i (t) = }
    + \disp \sum_{j=1}^{+\infty}\big(\Grande_i-\Grande_j\big)(t) d L_{ij}(t) \, , \\
    \disp
    \text{for any } j\in\N^*,\ L_{ij}(0) = 0,\ L_{ij} \equiv L_{ji},\\
    \disp \int_0^t \un_{|\Grande_i (s)-\Grande_j(s)|\neq 2 \,\rgrande} \, d  L_{ij}(s) = 0,\ L_{ii} \equiv 0,
\end{array}
\end{cases}	
\end{equation}
where $(\WGrande_i)_i$ are independent $\R^d$-valued Brownian motions and the local times $L_{ij}$ describe the effects of the elastic collisions between the hard spheres $i$ and $j$ (subject to normal reflection).

Moreover, any $\mugrande\in\Gcal_{\zgrande}(\zpiccola\nrg)$ is reversible for the dynamics.
This proves Theorem~\ref{th:DepletionDynamics}.

\subsection{Low-activity regime: no sphere-percolation}\label{sec:percolation}
%%%%%%%%%%%%%%%%%%%%%%%%%%%%%%%%%%%%%%%%%%%%%%%%%%%%%%%%%%%%%%%%%%%%%%

As a by-product of the chain estimate of Lemma~\ref{prop:LongChainDynamics}, needed to control the dynamics and prove the existence of solutions for~\eqref{eq:SDE2infty}, we get absence of percolation $\mugrande$-a.s.\ for small activities $\zgrande$. Although our argument is not the classical one in the line of~\cite{Liggett_Schonmann_Stacey_1997}, it still relies on the crucial idea that there is only a very small area in which an additional sphere can be put in order to lengthen a sphere chain, hence a very small, geometrically decreasing, probability to have long chains. 

\begin{proposition}\label{prop:No percolation}
We say that two spheres $\grande_i$ and $\grande_j$ of a configuration $\bgrande$ \emph{interact} if 
$|\grande_i - \grande_j| < 2\rdep$, i.e.\ if their interaction $\Vovlap\Big(\frac{|\grande_i - \grande_j|}{2 \rdep}\Big)$ does not vanish.
Let $\mugrande\in\Gcal_{\zgrande}(\zpiccola\nrg)$. Then, for any $\zpiccola>0$ 
and any $\zgrande < \zgrande_{\rm c} := (2^d(\rdep\rspace^d-\rgrande^d)\volball_d)^{-1}$,
there is $\mugrande$-a.s. no infinite cluster of interacting spheres. 
\end{proposition}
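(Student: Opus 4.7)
The plan is a first-moment/Borel--Cantelli argument: bound the $\mugrande$-expected number of chains of $\kappa$ consecutively interacting spheres rooted in a fixed bounded ball, and then let $\kappa\to\infty$.

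The key input is a Ruelle-type upper bound on the $(\kappa+1)$-point correlation function of $\mugrande$. Since the effective energy $\zpiccola\nrg$ is non-negative and $\nrg$ is monotone non-decreasing under the addition of spheres (the union of balls can only grow), one expects
\begin{equation*}
\rho^{\mugrande}_{\kappa+1}(x_1,\dots,x_{\kappa+1}) ~\leq~ \zgrande^{\kappa+1}
\end{equation*}
for every hard-core admissible tuple $(x_1,\dots,x_{\kappa+1})$. To derive this I would apply the DLR equation~\eqref{eq:mugrande} on a bounded window $\Lambda$ containing the tuple: the correlation function then equals $\zgrande^{\kappa+1}$ times the $\mugrande$-expectation of $\exp(-\zpiccola\,\Delta\nrg)\,\un_{\D}$, where $\Delta\nrg\geq 0$ is the conditional energy cost of adding the tuple to the current configuration; each of these two factors is at most $1$. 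Equivalently, the Papangelou intensity of $\mugrande$ is pointwise dominated by $\zgrande$.

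With this in hand, let $N_\kappa^{(n)}(\bgrande)$ count the ordered $(\kappa+1)$-tuples of pairwise-distinct spheres $(\grande_{j_1},\dots,\grande_{j_{\kappa+1}})\subset\bgrande$ with $|\grande_{j_1}|<n$ and $|\grande_{j_l}-\grande_{j_{l+1}}|<2\rdep$ for $l=1,\dots,\kappa$. Integrating the correlation bound against the chain indicator, and using the hard-core constraint $|\grande_{j_l}-\grande_{j_{l+1}}|\geq 2\rgrande$, so that each chain link is confined to an annulus of volume $2^d\volball_d(\rdep\rspace^d-\rgrande^d)$, yields
\begin{equation*}
E_{\mugrande}\!\left[N_\kappa^{(n)}\right] ~\leq~ \zgrande\,\volball_d\, n^d \,\Bigl(\zgrande\cdot 2^d \volball_d (\rdep\rspace^d-\rgrande^d)\Bigr)^{\kappa}.
\end{equation*}
Under the hypothesis on $\zgrande$ this decays geometrically in $\kappa$, hence is summable. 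Markov's inequality and the Borel--Cantelli lemma then imply that $\mugrande$-almost surely only finitely many chain lengths $\kappa$ admit a chain rooted in $B(0,n)$. Any infinite cluster of interacting spheres must contain a sphere lying in $B(0,n)$ for some $n\in\N^*$ and hence spawn an infinite chain from it; a union bound over $n\in\N^*$ therefore rules out any infinite cluster $\mugrande$-almost surely.

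The only real technical point I anticipate is the clean derivation of the Ruelle-type bound from the DLR prescription (handling the ratio of infinite-volume partition functions uniformly in the boundary configuration $\bgrandey_{\Lambda^c}$, and exploiting the set-monotonicity of $\nrg$). Once this is in place, the rest is a direct first-moment calculation entirely analogous, in spirit, to the dynamical chain estimate of Lemma~\ref{prop:LongChainDynamics}.
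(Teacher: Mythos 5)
Your proof is correct and reaches the same critical activity $(2^d(\rdep\rspace^d-\rgrande^d)\volball_d)^{-1}$, but it follows a genuinely different route from the paper's. The paper handles the percolation claim by mapping back to the two-type Gibbs measure $\mu$ via Proposition~\ref{prop:tracemesureequ}, and then re-uses the machinery already built for the dynamical construction: the mixed penalised measure $Q^\by_R$, the total-variation bound~\eqref{eq:TotalVariationUpperBound}, and the chain estimate of Lemma~\ref{prop:LongChainDynamics} (whose static core is Lemma~\ref{lm:LongChainEstimate}). You instead work entirely within the one-type marginal $\mugrande$ on $\MGrande$: you derive a Papangelou-intensity bound $c(x,\bgrande)\le\zgrande$ from the monotonicity of $\bgrande\mapsto\vol{\BGrande(\bgrande)}$ together with the hard-core indicator, obtain the Ruelle bound $\rho^{\mugrande}_{\kappa+1}\le\zgrande^{\kappa+1}$ by iterating the GNZ equation, and close with a first-moment/Borel--Cantelli count of self-avoiding chains rooted in $B(0,n)$. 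Both arguments hinge on the same geometric input --- each successive sphere in a chain is confined to the annulus $B(\,\cdot\,,2\rdep)\setminus B(\,\cdot\,,2\rgrande)$ of volume $2^d(\rdep\rspace^d-\rgrande^d)\volball_d$ --- but yours is self-contained (no reference to the two-type measure, the penalised dynamics, or the total-variation estimate) and makes the Ruelle mechanism explicit, whereas the paper's route is natural there only because the chain lemma was already needed for Theorem~\ref{theorem:SDE2infty}. The step you flag as the only real technical point is indeed the one lemma you owe, but it is standard: DLR for $\Gcal_\zgrande(\zpiccola\nrg)$ yields GNZ with Papangelou kernel $\zgrande\exp\big(-\zpiccola\,\Delta\nrg\big)\un_\D\le\zgrande$ since $\Delta\nrg\ge0$, and the iterated GNZ identity $\rho_{k}(x_1,\dots,x_k)=E_{\mugrande}\big[\prod_{l=1}^{k}c(x_l,\bgrande\cup\{x_1,\dots,x_{l-1}\})\big]$ then gives the bound.
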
  

\begin{proof}
If a configuration $\bgrandey$ has an infinite chain of interacting spheres at distance smaller than $2\rdep$ of one another, there is a radius $R(\bgrandey)$ such that $B(0,R)$ intersects this infinite chain for each $R>R(\bgrandey)$. 
Consequently, $\bgrandey$ has a chain of interacting spheres from $B(0,R)$ to $B(0,2R)$
which involves at least $\floor*{\frac{R}{2\rdep}}$ spheres. 
We then only have to prove that
\begin{align*}
 \mugrande(C_\infty)=0,
 \end{align*}
where
\begin{align*}
 C_\infty := \left\{ \text{For $R$ large enough, }~~ 
                     \bgrandey_{B(0,2R)} \in \BadPath_{\text{Chain}}(R,\floor*{\frac{R}{2\rdep}},2\rpiccola) \right\}.
\end{align*}
This holds as soon as 
\begin{align}\label{eq:SummabilityPercolation}
 \sum_R \mugrande\left( \bgrandey_{B(0,2R)} \in \BadPath_{\text{Chain}}(R,\floor*{\frac{R}{2\rdep}},2\rpiccola)  \right) \quad <+\infty .
\end{align}
By Proposition~\ref{prop:tracemesureequ}, $\mugrande$ is the projection on the sphere configuration space $\MGrande$ of the Gibbs measure 
$\mugrande(d\bgrande)\otimes\pi^{\zpiccola}_{\R^d\setminus\BGrande(\bgrande)}(d\bpiccola)\in\Gcal_{\zgrande,\zpiccola}$ on $\M$. 
Since $\BadPath_{\text{Chain}}$ events only involve spheres,
and $X^{\by,2R}(0)=\by_{B(0,2R)}$ by~\eqref{eq:XyR} is $\mu^\by_{2R}$-distributed under $Q^\by_{2R}$, we have
\begin{align*}
 &\mugrande\left( \bgrandey_{B(0,2R)} \in \BadPath_{\text{Chain}}(R,\floor*{\frac{R}{2\rdep}},2\rpiccola) \right)\\
 & = \mu\left( \by_{B(0,2R)} \in \BadPath_{\text{Chain}}(R,\floor*{\frac{R}{2\rdep}},2\rpiccola) \right) \\
 & = \int_\D P\left( X^{\by,2R}(0) \in \BadPath_{\text{Chain}}(R,\floor*{\frac{R}{2\rdep}},2\rpiccola)  \right) \mu(d\by) \\
 & \le \int_\D Q^\by_{2R}\left( \BadPath'_{\text{Chain}}(1,R,\floor*{\frac{R}{2\rdep}},2\rpiccola) \right) \mu(d\by) + \mathbf{d}_{TV}(2R).
\end{align*}
Thanks to~\eqref{eq:TotalVariationUpperBound} and Lemma~\ref{prop:LongChainDynamics},
we obtain
\begin{align*}
    &\mugrande\left( \bgrandey_{B(0,2R)} \in \BadPath_{\text{Chain}}(R,\floor*{\frac{R}{2\rdep}},2\rpiccola) \right) \\
    &\le \left( \zgrande \, ((2\rdep)^d-(2\rgrande)^d) \volball_d \right)^{\floor*{\frac{R}{2\rdep}}}  
      + 2\int_\D \zgrande \lambdaGyR((\BallGyR)^c) +\zpiccola \lambdaPyR((\BallPyR)^c) \, \mu(d\by).
\end{align*}
This ensures that~\eqref{eq:SummabilityPercolation} holds as soon as 
$\disp \zgrande < \frac{1}{((2\rdep)^d-(2\rgrande)^d) \volball_d}$, completing the proof.
\end{proof}

\begin{remark}\label{PercolationAsymtotics}
In a realistic setting, the radius $\rpiccola$ of the particles should be very small.
As expected, the colloidal-percolation critical activity 
\begin{equation}
    \zgrande_{\rm c} 
   = \frac{1}{2^d \volball_d \sum_{k=1}^d \binom{d}{k} \rpiccola^k \rgrande^{d-k}}
   \mathop{\quad\sim\quad}_{\rpiccola\to0} \frac{1}{d 2^d \volball_d \, \rgrande^{d-1} ~ \rpiccola }
\end{equation}
tends to infinity as the particle radius tends to zero.
Indeed, by definition, as $\rpiccola$ gets smaller, the distance at which sphere are considered interacting get smaller, hence it is more difficult for interacting chains to appear. For extremely small $\rpiccola$'s, such chains simply cannot form.
\end{remark}

\begin{remark}
    Although the connection between (absence of) percolation and (absence of) phase transition is not automatic -- in particular, the FKG property does not hold and, since this is not a symmetric mixture, a phase transition cannot be proven via symmetry breaking using percolation as in~\cite{CCK,Giacomin1995Agreement} -- it is still interesting to compare the different regimes.
    
    Let $\phi(y) = \volball_d\rdep\rspace^d - \frac12 \Vovlap(\abs{y}/(2\rdep))\geq 0$. By the classical result of Ruelle~\cite[Theorem 4.2.3]{Ruelle_book}, we know that there is no phase transition if
    \begin{equation*}
        \zgrande<e^{-1}\left((2\rgrande)^d \volball_d +
        \int_{\abs{y}\in(2\rgrande,2\rdep)}\abs{1-e^{-\zpiccola\phi(y)}}dy\right)^{-1}.
    \end{equation*}
    Comparing with Proposition~\ref{prop:No percolation}, we see (apart from the $e^{-1}$ factor) an additional $(2\rgrande)^d$ at the denominator, coming from the hard core. It is however worth noting (see~\cite{JT19}) that this should be significantly improved by considering the \emph{renormalised activity} $\zgrande\rspace' = \zgrande\exp\{-\zpiccola \volball_d \rdep\rspace^d\}$.
\end{remark}

\subsection{High-activity regime: towards an optimal packing}\label{sec:packing}
%%%%%%%%%%%%%%%%%%%%%%%%%%%%%%%%%%%%%%%%%%%%%%%%%%%%%%%%%%%%%%%%%%%%%%%%%%%%%%%%%%

We are now interested in the high-density behaviour of the equilibrium measures in $\Gcal_{\zgrande}(\zpiccola\nrg)$.

In the case of a finite number of hard spheres, we were able to prove in~\cite{FKRZ24} that
the reversible probability measure of the corresponding finite-dimensional gradient system for $n$ hard spheres concentrates around admissible configurations which minimise the energy.
Moreover, admissible $n$-sphere configurations who realise the minimal energy maximise the contact number. 

In the infinite setting, the behaviour is more complicated, as the number of spheres in any volume is random. There is now a competition between the minimisation of the \emph{energy} (achieved either by having less spheres or having the more densely packed) and that of the \emph{entropy} (achieved by being as close as possible to the completely random Poisson point process); this is the \emph{Gibbs variational principle}~\cite{georgii_1994,georgii_1995}. 
In particular then, since the energy is minimised not only by closely packing a given number of spheres, but also by taking as few of them as possible, only in the limit $\zgrande\to\infty$, the entropic cost of putting only a small number of spheres in the system is too high, making the minimising configurations only those that achieve the closest packing. This heuristics can be formalised as follows:
\begin{proposition}
    Let $\mugrande\in\Gcal_{\zgrande}(\zpiccola\nrg)$, and denote is intensity by $\rho_\zgrande(\zpiccola) = \lim_{n\to\infty}\frac{1}{\abs{\Lambda_n}}\int \abs{\bgrande_{\Lambda_n}}d \mugrande(\bgrande)$, $\Lambda_n=[-n,n)^d$. Then, for any $\zpiccola>0$, as $\zgrande\to\infty$, $\mugrande$ attains the closest-packing density, that is:
    \begin{equation*}
        \lim_{\zgrande\to\infty} \rho_\zgrande(\zpiccola) = \lim_{n\to\infty}\frac{1}{\abs{\Lambda_n}} N_n =: \rho^*,
    \end{equation*}
    with $N_n$ the maximal number of mutually disjoint open balls of radius $\rgrande$ included in $\Lambda_n$.
\end{proposition}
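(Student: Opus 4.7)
The plan is to prove matching upper and lower bounds $\limsup_{\zgrande\to\infty}\rho_\zgrande(\zpiccola)\le\rho^*$ and $\liminf_{\zgrande\to\infty}\rho_\zgrande(\zpiccola)\ge\rho^*$.

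The upper bound is a purely deterministic consequence of the hard-core constraint built into $\D$: for any $\bgrande\in\D$, the balls $\{B(\grande,\rgrande):\grande\in\bgrande\cap\Lambda_n\}$ are pairwise disjoint and contained in $\Lambda_n^{+}:=\Lambda_n+B(0,\rgrande)$, so $|\bgrande\cap\Lambda_n|\le N_n^{+}$, where $N_n^{+}$ denotes the packing number of $\Lambda_n^{+}$. Integrating against $\mugrande$, dividing by $|\Lambda_n|$, and using $\lim_n N_n^{+}/|\Lambda_n|=\rho^*$ (since the boundary layer has volume $O(n^{d-1})$ and $|\Lambda_n^+|/|\Lambda_n|\to 1$) gives the upper bound, uniformly in $\zgrande,\zpiccola$.

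For the lower bound, I would apply the DLR equation~\eqref{eq:DLR_dep} with $F(\bgrande)=|\bgrande\cap\Lambda_n|$, yielding
\[
    \int |\bgrande\cap\Lambda_n|\,\mugrande(d\bgrande)
    = \int E^{\bgrandey}_{\Lambda_n}\!\left[|\bgrande|\right]\,\mugrande(d\bgrandey),
\]
where $E^{\bgrandey}_{\Lambda_n}$ denotes the expectation under the finite-volume conditional Gibbs measure $\mu^{\bgrandey}_{\Lambda_n}(d\bgrande)\propto\un_\D(\bgrandey_{\Lambda_n^c}\bgrande)\,e^{-\zpiccola\nrg_{\Lambda_n}(\bgrande\bgrandey_{\Lambda_n^c})}\,\pi^{\zgrande}_{\Lambda_n}(d\bgrande)$. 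The depletion factor is trapped in $[e^{-\zpiccola\vol(\Lambda_n+B(0,\rdep))},1]$ and therefore only distorts the measure by a $\zgrande$-independent factor; the driving quantity is the activity weight $\zgrande^{|\bgrande|}/|\bgrande|!$. Writing $N_n(\bgrandey)$ for the maximum number of centres that can be placed in $\Lambda_n$ compatibly with $\un_\D(\bgrandey_{\Lambda_n^c}\cdot)$, the heart of the argument is the concentration
\[
    \mu^{\bgrandey}_{\Lambda_n}\!\left(|\bgrande|\ge(1-\eps)N_n(\bgrandey)\right)
    \xrightarrow{\zgrande\to\infty} 1,
    \qquad\forall\,\eps>0.
\]
To obtain it, I would take a subset of cardinality $\floor{(1-\eps/2)N_n(\bgrandey)}$ of a maximum packing, dilate it slightly inwards so that all pairwise distances become strictly greater than $2\rgrande+\gamma_\eps$ for some $\gamma_\eps>0$, and let each centre move freely within a ball of radius $\gamma_\eps/4$. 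This produces a Lebesgue-positive region of admissible configurations that contributes to $Z_{\Lambda_n}(\bgrandey)$ a lower bound of order $\zgrande^{(1-\eps/2)N_n(\bgrandey)}c_\eps^{N_n(\bgrandey)}$, to be compared with the trivial upper bound $\sum_{k<(1-\eps)N_n(\bgrandey)}(\zgrande|\Lambda_n|)^k/k!$ on configurations with strictly fewer than $(1-\eps)N_n(\bgrandey)$ points; the ratio contains a factor $\zgrande^{\eps N_n(\bgrandey)/2}$ that diverges as $\zgrande\to\infty$.

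The argument is then closed by a deterministic geometric lemma giving $N_n(\bgrandey)/|\Lambda_n|\to\rho^*$ as $n\to\infty$, for $\mugrande$-typical $\bgrandey$: the external configuration only excludes a tubular neighbourhood of $\partial\Lambda_n$ of width $2\rgrande$, of volume $O(n^{d-1})$, provided $\bgrandey$ has sub-critical Poisson-type density in that layer, which follows from the upper bound on $\rho_\zgrande(\zpiccola)$ already established. Sending $n\to\infty$, then $\zgrande\to\infty$, then $\eps\to 0$ delivers the matching lower bound. The main obstacle will be the uniformity of the concentration estimate in $\bgrandey$: the constants $c_\eps,\gamma_\eps$ and the value $N_n(\bgrandey)$ depend on the realisation, so the estimate must be made to hold on a set of $\mugrande$-probability arbitrarily close to $1$; this is handled by exploiting the already-established upper bound, together with the translation invariance of the chosen $\mugrande$, to control the density of $\bgrandey$ in the boundary layer and hence both $N_n(\bgrandey)$ and $c_\eps$ uniformly outside a small exceptional set.
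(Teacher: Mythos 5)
Your argument is correct in spirit but takes a genuinely different route from the paper. The paper's proof is a one-line appeal to an existing result: it invokes~\cite[Proposition 2]{Mase_2001}, which establishes the closest-packing limit for any regular potential with hard core via the Gibbs variational principle. Your proof, in contrast, is a self-contained DLR computation: a deterministic packing upper bound, a conditioning step via~\eqref{eq:DLR_dep}, and a partition-function comparison showing that the finite-volume conditional Gibbs measure concentrates on near-maximal configurations as $\zgrande\to\infty$. Both work; the paper's is shorter, yours is more elementary and more explicit about where the $\zgrande^{\eps N_n/2}$ divergence comes from.

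Two points deserve attention. First, your stated order of limits (``$n\to\infty$, then $\zgrande\to\infty$, then $\eps\to0$'') cannot be taken at face value: your concentration estimate has a rate controlled by $e^{\zpiccola\vol(\Lambda_n+B(0,\rdep))}$, which blows up with $n$ for fixed $\zgrande$, so the $n\to\infty$ limit does not commute past $\zgrande\to\infty$ in that form. The correct scheme is to use stationarity (translation invariance) of $\mugrande$ to replace the $n\to\infty$ limit defining $\rho_\zgrande(\zpiccola)$ by the exact identity $\rho_\zgrande(\zpiccola)=\frac{1}{\abs{\Lambda_n}}\int\abs{\bgrande_{\Lambda_n}}\,d\mugrande$ valid for every $n$; then one fixes $n$ and $\eps$, sends $\zgrande\to\infty$, and only afterwards sends $n\to\infty$ and $\eps\to0$. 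You do invoke translation invariance, but this is the place where it is actually needed, and the phrasing obscures that. Second, the ``main obstacle'' you identify --- uniformity of $N_n(\bgrandey)$, $\gamma_\eps$, $c_\eps$ over $\bgrandey$ --- is not really an obstacle at all, and the $\mugrande$-probabilistic exceptional-set argument you sketch to handle it is both unnecessary and delicate (since $\mugrande$ itself varies with $\zgrande$). The cleaner route: replace $N_n(\bgrandey)$ throughout by the deterministic quantity $N_{n-c\rgrande}$ for a fixed constant $c$, noting that any configuration of points at pairwise distance $\ge 2\rgrande$ inside $\Lambda_{n-2\rgrande}$ is automatically compatible with \emph{any} boundary condition $\bgrandey_{\Lambda_n^c}\in\D$, so $N_n(\bgrandey)\ge N_{n-2\rgrande}$ for all $\bgrandey$, and the dilated-packing construction (which uses only a deterministic optimal packing of a slightly smaller box) then produces $\gamma_\eps$ and $c_\eps$ that depend only on $n$ and $\eps$. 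With this, your concentration bound becomes uniform in $\bgrandey$ by construction, the integral against $\mugrande(d\bgrandey)$ is trivially controlled, and the probabilistic overhead disappears.
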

The proof is a straightforward application of~\cite[Proposition 2]{Mase_2001}, which states the above result for any regular potential with hard core, as $\phi_2$ is (e.g.~\cite{Ruelle_book}), using the Gibbs variational principle as main tool. Note that the effects of the ``finite'' parts of the interaction vanish when taking the high-activity limit, and only the infinite hard-core interaction plays a role; in particular, the limiting density does not depend on the inverse temperature parameter $\zpiccola$, and is simply given by the packing density, i.e.\ the highest possible one under the hard-core constraint.
\begin{remark}
    In view of such a result, it is natural to ask whether one may simulate the gradient dynamics~\eqref{eq:Sdep} to estimate the celebrated contact number between $n$ hard spheres in any dimension. Indeed, we have made such a simulation (see~\url{https://lab.wias-berlin.de/zass/dynamics-of-spheres}) for our previous work~\cite{FKRZ24}; obtaining estimates in high dimensions is then of course worth pursuing, but requires long computation times.
\end{remark}

%%%%%%%%%%%%%%%%%%%%%%%%%%%%%%%%%%%%%%%%%%%%%%%%%%%%%%%%%%%%%%%%%%%
\appendix
\section{Proof of the remaining statements}
%%%%%%%%%%%%%%%%%%%%%%%%%%%%%%%%%%%%%%%%%%%%%%%%%%%%%%%%%%%%%%%%%%%

\subsection{Existence of the penalisation functions}\label{app:psigrandepsipiccola} %%%%%%%%%%%%%%%%%%%%%%%%%%%%%

We prove here the existence of the sphere penalisation function $\psiGyR$ 
and the particle penalisation function $\psiPyR$ needed in Section \ref{sebsect:penalisationFunctions}.
Let $\psi$ be a non-increasing $\C^\infty$ function with $\psi(t)=1$ if $t\le0$ and $\psi(t)=0$ if $t\ge1$, and $\phi$ be a non-decreasing $\C^\infty$ function with $\phi(t)=0$ if $t\le0$ and $\phi(t)=t+\const$ if $t\ge1$. Then the two functions
\begin{align*}
  \psiGyR(x) := 
  & 2\log(R) + \phi\big(R^{d+1}(|x|-R)\big) 
    +\sum_{\substack{\grandey\in\by \\ R\le|\grandey|\le R+2\rgrande}} 
     \psi\left( \frac{|\grandey-x|^2}{4\rgrande^2} \right)+\sum_{\substack{\piccolay\in\by \\ R\le|\piccolay|\le R+\rdep}} 
     \psi\left( \frac{|\piccolay-x|^2}{\rdep\rspace ^2} \right) \\
    & \qquad +\log\Big( \sharp\{ \grandey\in\by: R\le|\grandey|\le R+2\rgrande \} \Big) + \log\Big( \sharp\{ \piccolay\in\by: R\le|\piccolay|\le R+\rdep \} \Big), \\
 \psiPyR(x) := 
  & 2\log(R) + \phi\big(R^{d+1}(|x|-R)\big) 
    +\sum_{\substack{\grandey\in\by \\ R\le|\grandey|\le R+\rdep}} 
     \psi\left( \frac{|\grandey-x|^2}{\rdep\rspace ^2} \right)\\
  & \qquad +\log\Big( \sharp\{ \grandey\in\by: R\le|\grandey|\le R+\rdep \} \Big),
\end{align*} 
with the convention that $\log(\sharp\emptyset)+\sum_\emptyset \cdots =0$, are non-negative functions of class $\mathcal{C}^2$ with bounded derivatives. Moreover, $\psiGyR$ is constant on $\BallGyR$, and $\psiPyR$ is constant on $\BallPyR$. By construction then,
\begin{equation*}
  \sum_{R=1}^\infty \int_{(\BallGyR)^c} e^{-\psiGyR(x)} \, dx <+\infty
  \quad\text{ and }\quad 
  \sum_{R=1}^\infty \int_{(\BallPyR)^c} e^{-\psiPyR(x)} \, dx <+\infty.
\end{equation*}

\subsection{Proof of the ball separation and nested inclusion Lemma \ref{LemmaNestedInclusion}}\label{app:NestedInclusionProof} %%%%%%%%%%%%%%%%%

We consider a path $X:[0,1]\to\D$ that belongs neither to $\BadPath'_{\text{Chain}}(\delta,\alpha,\kappa,\eps)$ nor to $\BadPath'_{\text{Fast}}(\alpha,\delta,\eps/4)$, and study it during the time interval $[a\delta,(a+1)\delta]$, on balls of radii
\begin{equation*}
\begin{split}
    0&< \rho=:\rho_{1/\delta} < \cdots 
    < \rho_{a+1} < \rho_a:=\rho_{a+1}+2\kappa(2\rgrande+\eps) 
    < \cdots 
    < \rho_0:=\rho+\frac{2\kappa}{\delta}(2\rgrande+\eps) 
    \le \alpha, 
\end{split}
\end{equation*}
focusing first on the spheres, then on the particles.

By definition of the index set $\Igrande := \Igrande(X(a\delta),\rho_a,\eps)$, the spheres of $X(a\delta)$ whose indices are not in $\Igrande$ are neither in the ball $B(0,\rho_a)$ nor in an $\eps$-chain connected to it, i.e.\ not at distance $(2\rgrande+\eps)$ of any $\eps$-chain connected to $B(0,\rho_a)$, that is:
\begin{equation*}
    \forall j\notin\Igrande(X(a\delta),\rho_a,\eps), \quad 
   |\grandeX_j(a\delta)| > \rho_a,
\end{equation*}
\begin{equation*}
   \forall j\notin\Igrande(X(a\delta),\rho_a,\eps),\  
   \forall i\in\Igrande(X(a\delta),\rho_a,\eps),\quad 
   |\grandeX_j(a\delta)-\grandeX_i(a\delta)| >2\rgrande+\eps.
\end{equation*}
$X\notin\BadPath'_{\text{Fast}}(\alpha,\delta,\eps/4)$ implies that no sphere in $B(0,\alpha)$ moves by more than $\eps/4$ 
during the time interval $[a\delta,(a+1)\delta]$, hence the separation property:
\begin{equation*}
\begin{split}
    &\forall j\notin\Igrande(X(a\delta),\rho_a,\eps),\ 
    \forall i\in\Igrande(X(a\delta),\rho_a,\eps),\ 
    \forall t\in[a\delta,(a+1)\delta], \\ 
    &\qquad |\grandeX_j(t)-\grandeX_i(t)| >2\rgrande+\frac{\eps}{2}.
\end{split}  
\end{equation*}
Since $X\notin \BadPath'_{\text{Chain}}(\delta,\alpha,\kappa,\eps)$, the starting configuration $X(a\delta)$ does not have any $\eps$-chain of spheres that intersects $B(0,\alpha)$ and involves $(\kappa+1)$ spheres or more, i.e.\ is longer than $\kappa(2\rgrande+\eps)$. 
Consequently, all spheres labelled in $\Igrande(X(a\delta),\rho_a,\eps)$
start at a distance from the origin of at most:
\begin{equation}\label{RadiusJSpheres}
   \forall i\in\Igrande(X(a\delta),\rho_a,\eps),\quad 
   |\grandeX_i(a\delta)| 
   \le \underbrace{ ~~~\rho_a~~~ }_{\text{radius defining $\Igrande$}} 
       + \underbrace{ \kappa(2\rgrande+\eps) }_{\text{longest $\eps$-chain}}
\end{equation}   
Since they move at most of $\eps/4$ during the time interval $[a\delta,(a+1)\delta]$, we get the localisation property:
\begin{equation*}
    \forall i\in\Igrande(X(a\delta),\rho_a,\eps),\ 
   \forall t\in[a\delta,(a+1)\delta], \quad 
   |\grandeX_i(t)| \le \rho_a+ \kappa(2\rgrande+\eps) +\frac{\eps}{4}.  
\end{equation*}
Finally, for each $j\notin\Igrande(X(a\delta),\rho_a,\eps)$,
we already have $|\grandeX_j(a\delta)| > \rho_a$,
thus $|\grandeX_j((a+1)\delta)| > \rho_a-\frac{\eps}{4}$:
\begin{equation*}
\begin{split}
    j\notin\Igrande(X(a\delta),\rho_a,\eps) 
    &\implies |\grandeX_j((a+1)\delta)| > \rho_a-\frac{\eps}{4} 
                                      > \rho_{a+1} + \kappa(2\rgrande+\eps) \\
   &\implies j\notin\Igrande(X((a+1)\delta),\rho_{a+1},\eps), 
\end{split}   
\end{equation*}
thanks to~\eqref{RadiusJSpheres}, used for $a+1$ instead of $a$.
This proves the first inclusion:
\begin{equation*}
    \Igrande(X((a+1)\delta),\rho_{a+1},\eps) \subset \Igrande(X(a\delta),\rho_a,\eps).  
\end{equation*}
The proof of the analogue properties for the particles is similar.
The particles of $X(a\delta)$ whose indices do not belong to $\Ipiccola:= \Ipiccola(X(a\delta),\rho_a,\eps) $ are neither in $B(0,\rho_a)$ nor in the $\eps$-neighbourhood of an $\eps$-chain connected to it, i.e.\ not at distance $\rdep+\eps$ of any $\eps$-chain connected to $B(0,\rho_a)$:
\begin{equation*}
    \forall k\notin\Ipiccola(X(a\delta),\rho_a,\eps), \quad 
    |\piccolaX_k(a\delta)| > \rho_a,
\end{equation*}
\begin{equation*}
    \forall k\notin\Ipiccola(X(a\delta),\rho_a,\eps),\ 
    \forall i\in\Igrande(X(a\delta),\rho_a,\eps), \quad 
    |\piccolaX_k(a\delta)-\grandeX_i(a\delta)| >\rdep+\eps .  
\end{equation*}
Again, no sphere and no particle in $B(0,\alpha)$ moves by more than $\eps/4$ 
during the time interval, hence 
\begin{equation}\label{RadiusJParticles}
  \forall k\notin\Ipiccola(X(a\delta),\rho_a,\eps),\ 
   \forall t\in[a\delta,(a+1)\delta], \quad |\piccolaX_k(t)| >\rho_a-\frac{\eps}{4},
\end{equation}  
\begin{equation*}
\begin{split}
    &\forall k\notin\Ipiccola(X(a\delta),\rho_a,\eps),\ 
    \forall i\in\Igrande(X(a\delta),\rho_a,\eps),\ 
    \forall t\in[a\delta,(a+1)\delta],\quad |\piccolaX_k(t)-\grandeX_i(t)| >\rdep+\frac{\eps}{2}.
\end{split}
\end{equation*}
Particles in $\Ipiccola$ stay near the origin as a consequence of~\eqref{RadiusJSpheres},
\begin{equation*}
    \forall k\in\Ipiccola(X(a\delta),\rho_a,\eps), \quad 
   |\piccolaX_k(a\delta)| \le \underbrace{ \rho_a + \kappa(2\rgrande+\eps)  }_{\text{farthest position of spheres in $\Igrande$}} + \rdep+\eps,  
\end{equation*}
hence 
\begin{equation*}
    \forall k\in\Ipiccola(X(a\delta),\rho_a,\eps),\ 
   \forall t\in[a\delta,(a+1)\delta], \ 
   |\piccolaX_k(t)| \le \rho_a + \kappa(2\rgrande+\eps) + \rdep+\frac{5\eps}{4}. 
\end{equation*}
Inequality~\eqref{RadiusJParticles} for $t=(a+1)\delta$ yields
\begin{equation*}
\begin{split}
    &k\notin\Ipiccola(X(a\delta),\rho_a,\eps) \\
    &\implies 
    |\piccolaX_k((a+1)\delta)| > \rho_a-\frac{\eps}{4} 
    \ge \rho_{a+1} +\kappa(2\rgrande+\eps) +\rdep+\eps \\
    &\implies
    k\notin\Ipiccola(X((a+1)\delta),\rho_{a+1},\eps),
\end{split}
\end{equation*}
thanks to the consequence of~\eqref{RadiusJSpheres}, applied to $(a+1)$ instead of $a$.
This proves the second inclusion:
\begin{equation*}
    \Ipiccola(X((a+1)\delta),\rho_{a+1},\eps) 
   \subset \Ipiccola(X(a\delta),\rho_a,\eps),   
\end{equation*}
thus concluding the proof of the lemma.
\qed

\subsection{Proof of Lemma~\ref{prop:LongChainDynamics}: long sphere chains are improbable}\label{sec:proofLongChain} %%%%%%%%%%%%%%%%%%%%%%%%%%%%% 

We prove here Lemma~\ref{prop:LongChainDynamics}, that is, we find an upper bound for the probability of long sphere chains to form, at some time between $0$ and $1$, for the mixed penalised process starting from its reversible distribution.

The definition of $\BadPath'_{\text{Chain}}(\delta,\alpha,\kappa,\eps)$ in Definition \ref{def:defChain} and the fact that $X(t)$ is $\mu^\by_R$-distributed for each $t$ under $Q^\by_R$ imply, for any time step $\delta>0$\,:
\begin{align*}
   Q^\by_R\Big( \BadPath'_{\text{Chain}}(\delta,\alpha,\kappa,\eps) \Big) 
   & = \bigcup_{k=0}^{\lfloor 1/\delta \rfloor} Q^\by_R\Big( \bX(k\delta)\in\BadPath'_{\text{Chain}}(\alpha,\kappa,\eps) \Big) 
    \le \frac{1}{\delta} ~ \mu^\by_R\left( \BadPath_{\text{Chain}}(\alpha,\kappa,\eps) \right).
\end{align*}
To complete the proof of Lemma~\ref{prop:LongChainDynamics}, we only have to prove the following static estimate:
\begin{lemma}\label{lm:LongChainEstimate} For each sphere activity $\zgrande>0$, any chain length $\kappa\in\N^*$, any radius $\alpha>0$, and any leeway $\eps\in(0,2\rgrande)$\,,
\begin{align*}
\forall \by\in\D,\, \forall R>0, \quad
 \mu^\by_R\left( \BadPath_{\text{Chain}}(\alpha,\kappa,\eps) \right)  
 & \le \left( \zgrande \, ((2\rgrande+\eps)^d-(2\rgrande)^d)\volball_d \right)^\kappa .
\end{align*}
\end{lemma}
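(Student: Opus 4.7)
The plan is to apply Markov's inequality and then exploit the Poisson-mixture structure of $\mu^\by_R$ together with the hard-core exclusion embedded in $\un_\D$. Since
\[
    \un_{\BadPath_{\text{Chain}}(\alpha,\kappa,\eps)}(\bx) \le \sum_{(j_1,\ldots,j_{\kappa+1})\text{ distinct}} \un_{\{|\grande_{j_1}|\le\alpha\}} \prod_{l=1}^{\kappa} \un_{\{|\grande_{j_l}-\grande_{j_{l+1}}|<2\rgrande+\eps\}},
\]
the first step is to bound $\mu^\by_R(\BadPath_{\text{Chain}})$ by the expected number, under $\mu^\by_R$, of ordered $(\kappa+1)$-chains. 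Expanding this expectation through~\eqref{eq:muRy} and~\eqref{eq:nu_yRn} and using the permutation-invariance of $\lambdaGyR^{\otimes\ngrande}\otimes\lambdaPyR^{\otimes\npiccola}$ under sphere-label shuffles, the inner sum over distinct tuples collapses to the combinatorial factor $\ngrande!/(\ngrande-\kappa-1)!$ multiplying the integrand evaluated at the fixed tuple $(1,\ldots,\kappa+1)$.

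The crux is then the geometric observation that $\un_\D$ enforces the hard-core constraint $|\grande_l-\grande_{l+1}|\ge 2\rgrande$ between any two distinct spheres. Combined with the chain bound $|\grande_l-\grande_{l+1}|<2\rgrande+\eps$, this confines $\grande_{l+1}$ to a spherical annulus around $\grande_l$ of Lebesgue volume $V(\eps):=((2\rgrande+\eps)^d-(2\rgrande)^d)\volball_d$. Using the pointwise domination $\un_\D(\bgrande\bpiccola)\le\un_\D(\bgrande\setminus\{\grande_2,\ldots,\grande_{\kappa+1}\}\,\bpiccola)\prod_{l=1}^\kappa\un_{\{|\grande_l-\grande_{l+1}|\ge 2\rgrande\}}$ together with $\lambdaGyR(dx)\le dx$, I would integrate out $\grande_{\kappa+1},\grande_\kappa,\ldots,\grande_2$ iteratively. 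Each step contributes a factor of at most $V(\eps)$, yielding the prefactor $V(\eps)^\kappa$.

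After these $\kappa$ integrations, what remains is an integral over $\grande_1$ (restricted to $B(0,\alpha)$), the unused $\ngrande-\kappa-1$ sphere variables, and the $\npiccola$ particle variables, all under the residual admissibility indicator. Setting $m:=\ngrande-\kappa-1$ and factoring $\zgrande^\ngrande=\zgrande^{\kappa+1}\zgrande^m$, the surviving double sum over $(m,\npiccola)$ exactly reconstructs $\Zmixing^\by_R$ and cancels with the normalisation constant of $\mu^\by_R$.

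The main obstacle will be the residual $\grande_1$-integration: a direct estimate produces an extra prefactor $\zgrande\,\lambdaGyR(B(0,\alpha))$ that does not appear in the lemma's clean statement. To recover $(\zgrande V(\eps))^\kappa$ exactly, the plan is to restrict the Markov bound to a canonical starter — for instance, choosing $\grande_{j_1}$ to be the lexicographically smallest sphere of $\bgrande$ inside $B(0,\alpha)$ — so that the chain count drops from $\sharp(\bgrande\cap B(0,\alpha))$ to at most $1$, replacing the factor $\zgrande\,\lambdaGyR(B(0,\alpha))$ by a Gibbs void probability bounded by $1$. The delicate point is that imposing this canonical constraint must not interfere with the iterative annular integrations that produce $V(\eps)^\kappa$; handling that compatibly — or, alternatively, invoking a Poisson stochastic-domination estimate for the $\grande_1$-region — is where the real work lies. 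Even if one only obtains the weaker prefactor $\zgrande\,\lambdaGyR(B(0,\alpha))\le\zgrande\,\alpha^d\volball_d$, this polynomial growth in $R$ is readily absorbed by the geometric decay $V(\eps)^{\floor{R^{1/3}}}$ and therefore suffices for the downstream summability estimate~\eqref{eq:BorelCantelliQyR}.
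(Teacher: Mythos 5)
Your geometric core is exactly the paper's: the hard-core constraint $|\grande_l-\grande_{l+1}|\ge 2\rgrande$ confines the next chain member to an annulus of volume $V(\eps)=\big((2\rgrande+\eps)^d-(2\rgrande)^d\big)\volball_d$, and $\lambdaGyR(dx)\le dx$ lets you bound each annular integral by $V(\eps)$. The gap you flagged at the root sphere is also genuine: union-bounding over \emph{all} ordered $(\kappa+1)$-tuples at the outset does introduce an expected-count factor $\zgrande\,\lambdaGyR(B(0,\alpha))$ that the lemma's clean statement does not have.

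Where the paper differs — and this is the fix you were groping for — is that it never decomposes $\BadPath_{\text{Chain}}(\alpha,\kappa,\eps)$ into a sum over starting spheres. Instead it runs a \emph{recursion in $\kappa$}: under $\nu^\by_{R,\ngrande,\npiccola}$, it sums only over the label of the \emph{last} chain member $\grande_{\kappa+1}$, integrates that single variable out of the annulus (getting one factor of $V(\eps)$, and a factor $\ngrande$ from the union bound), and identifies what remains as $(\ngrande-1)!\,\nu^\by_{R,\ngrande-1,\npiccola}\!\left(\BadPath_{\text{Chain}}(\alpha,\kappa-1,\eps)\right)$ — still the measure of a \emph{set}, not a tuple count. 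Transferring this recursion to the Poisson mixture $\mu^\by_R$ (which absorbs the factor $\ngrande$ via $\zgrande^\ngrande/\ngrande! \cdot \ngrande = \zgrande\cdot\zgrande^{\ngrande-1}/(\ngrande-1)!$) gives $\mu^\by_R(\BadPath_{\text{Chain}}(\kappa))\le \zgrande V(\eps)\,\mu^\by_R(\BadPath_{\text{Chain}}(\kappa-1))$, and iterating $\kappa$ times lands on $\BadPath_{\text{Chain}}(\alpha,0,\eps)$, which is just the event ``at least one sphere in $B(0,\alpha)$'', a bona fide probability $\le 1$. So the root sphere never contributes an extra factor because it is never peeled off. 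Your fix (b) — accepting the $\zgrande\alpha^d\volball_d$ prefactor — is correct and would indeed suffice for~\eqref{eq:BorelCantelliQyR}, but it proves a weaker statement than the lemma; your fix (a) is the right instinct but the cleanest realisation of it is the recursive set-based peeling just described, which makes the canonical-starter bookkeeping unnecessary.
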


\begin{proof}
The proof relies on an induction inequality for chains under the measure $\nu^\by_{R,\ngrande,\npiccola}$.

We have to carefully manage the difference between labelled and unlabelled sphere configurations. Fix first the sphere indices in the norm, in order to avoid double counting. This is possible because the measure of the set of configurations where several spheres have the same norm is zero. 
The exchangeability of the measure $\otimes_{i=1}^\ngrande \lambdaGyR(d\grande_i)$ ensures that:
\begin{align*}
& \nu^\by_{R,\ngrande,\npiccola}\left( \BadPath_{\text{Chain}}(\alpha,\kappa,\eps) \right)   \\
& = n! \int_{\R^{\ngrande d}} \int_{\R^{\npiccola d}}
       \un_{\{|\grande_1|<|\grande_2|<\cdots<|\grande_n|\}} \, 
       \un_{\{ ( \grande_1, \ldots, \grande_n )\in\BadPath_{\text{Chain}}(\alpha,\kappa,\eps)\}} \, 
       \un_{\D}(\bgrande\bpiccola)       
\otimes_{k=1}^\npiccola \lambdaPyR(d\piccola_k) \otimes_{i=1}^\ngrande \lambdaGyR(d\grande_i).     
\end{align*}
For any finite configuration $\bgrande = (\grande_1,\ldots,\grande_\ngrande) \in \D$, 
the existence of at least one $\eps$-chain of length $\kappa$ starting in $|B(0,\alpha)|$ is equivalent to the existence of a permutation $\tau$ on $\{1,\dots,\ngrande\}$ such that
$|\grande_{\tau(1)}|\le\alpha$ and
$|\grande_{\tau(1)}-\grande_{\tau(2)}|<2\rgrande+\eps$, 
$|\grande_{\tau(2)}-\grande_{\tau(3)}|<2\rgrande+\eps$, \ldots,
$|\grande_{\tau(\kappa)}-\grande_{\tau(\kappa+1)}|<2\rgrande+\eps$.
Using the inverse permutation, again denoted by $\tau$, we can rewrite the expression:
\begin{align}
& \nu^\by_{R,\ngrande,\npiccola}\left( \BadPath_{\text{Chain}}(\alpha,\kappa,\eps) \right)   \\
& = n! \int_{\R^{\ngrande d}} \int_{\R^{\npiccola d}}
       \un_{\{|\grande_1|<|\grande_2|<\cdots<|\grande_n|\}}\       
       \un_{\{ \exists \tau : 
             |\grande_{\tau(1)}|\le\alpha,~
             |\grande_{\tau(1)}-\grande_{\tau(2)}|<2\rgrande+\eps,\ldots,~
             |\grande_{\tau(\kappa)}-\grande_{\tau(\kappa+1)}|<2\rgrande+\eps \}} \, \un_{\D}(\bgrande\bpiccola)  \\
&\phantom{= n! \int_{\R^{\ngrande d}} \int_{\R^{\npiccola d}}\un_{|\grande_1|<|\grande_2|<\cdots<|\grande_n|} \, \un}
        \qquad\qquad\qquad\qquad\qquad\qquad\qquad\,  \otimes_{k=1}^\npiccola \lambdaPyR(d\piccola_k) \otimes_{i=1}^\ngrande \lambdaGyR(d\grande_i) \\
& = n! \int_{\R^{\ngrande d}} \int_{\R^{\npiccola d}}
       \un_{\{\exists \tau :
            |\grande_{\tau(1)}|<|\grande_{\tau(2)}|<\cdots<|\grande_{\tau(n)}|\}} \
       \un_{\{ |\grande_1|\le\alpha,~
             |\grande_1-\grande_2|<2\rgrande+\eps,\ldots,~
             |\grande_{\kappa}-\grande_{\kappa+1}|<2\rgrande+\eps \}} \,  \un_{\D}(\bgrande\bpiccola) \\
&\phantom{= n! \int_{\R^{\ngrande d}} \int_{\R^{\npiccola d}}\un_{|\grande_1|<|\grande_2|<\cdots<|\grande_n|} \, \un}
        \qquad\qquad\qquad\qquad\qquad\qquad\qquad\,  \otimes_{k=1}^\npiccola \lambdaPyR(d\piccola_k) \otimes_{i=1}^\ngrande \lambdaGyR(d\grande_i).
\end{align}
Integrating separately with respect to $\grande_{\kappa+1}$, we find:
\begin{align}
&\nu^\by_{R,\ngrande,\npiccola}\left( \BadPath_{\text{Chain}}(\alpha,\kappa,\eps) \right)\\
& = n! \int_{\R^{(\ngrande-1) d}} \int_{\R^{\npiccola d}} \int_{\R^{d}}
       \un_{ \{|\grande_1|\le\alpha,~
             |\grande_1-\grande_2|<2\rgrande+\eps,\ldots,~
             |\grande_{\kappa}-\grande_{\kappa+1}|<2\rgrande+\eps\} } \, \un_{\D}(\bgrande\bpiccola)  \\
&\phantom{= n! \int_{\R^{\ngrande d}} \int_{\R^{\npiccola d}}\un}
        \qquad \qquad
       \lambdaGyR(d\grande_{\kappa+1}) \otimes_{k=1}^\npiccola \lambdaPyR(d\piccola_k) \otimes_{i=1,i\neq\kappa+1}^\ngrande \lambdaGyR(d\grande_i),\label{eq:nunmChain}
\end{align}
since $\un_{\{\exists \tau : |\grande_{\tau(1)}|<|\grande_{\tau(2)}|<\cdots<|\grande_{\tau(n)}|\}}$ is equal to $1$ for almost every sphere configuration. 
Thanks to the inequality
\begin{equation*}
    \un_{\D} (\grande_1,\dots,\grande_{\ngrande}, \bpiccola)
   \le \un_{\D}( \grande_1,\dots,\grande_{\kappa},\grande_{\kappa+2},\dots,\grande_{\ngrande}, \bpiccola) \, 
   \un_{\{|\grande_{\kappa}-\grande_{\kappa+1}|\ge2\rgrande\}},
\end{equation*}
the integral on $\grande_{\kappa+1}$ can be isolated as
\begin{equation*}
\begin{split}
    &\int_{\R^{d}} \un_{\{ 2\rgrande \le |\grande_{\kappa}-\grande_{\kappa+1}| <2\rgrande+\eps\} } \, 
   \lambdaGyR(d\grande_{\kappa+1}) 
   \le |B(\grande_{\kappa},2\rgrande+\eps) \setminus B(\grande_{\kappa},2\rgrande)|
       = ((2\rgrande+\eps)^d-(2\rgrande)^d)|B(0,1)| .
\end{split}
\end{equation*}
We finally get
\begin{align*}
    & \nu^\by_{R,\ngrande,\npiccola}\left( \BadPath_{\text{Chain}}(\alpha,\kappa,\eps) \right)   \\
    & \le n! ((2\rgrande+\eps)^d-(2\rgrande)^d)|B(0,1)|\\
&\phantom{\le }
    \int_{\R^{(\ngrande-1) d}} \int_{\R^{\npiccola d}} 
    \un_{\{ |\grande_1|\le\alpha,\,
             |\grande_1-\grande_2|<2\rgrande+\eps,\ldots,\,
             |\grande_{\kappa-1}-\grande_{\kappa}|<2\rgrande+\eps\} } \, \un_{\D}(\bgrande\bpiccola)  \otimes_{k=1}^\npiccola \lambdaPyR(d\piccola_k) \otimes_{i=1,i\neq\kappa+1}^\ngrande \lambdaGyR(d\grande_i).
\end{align*}
From~\eqref{eq:nunmChain}, the above integral is equal to 
\begin{equation*}
    (\ngrande-1)!~ \nu^\by_{R,\ngrande-1,\npiccola}\left( \BadPath_{\text{Chain}}(\alpha,\kappa-1,\eps) \right),
\end{equation*}
so that we find
\begin{align*}
    &\nu^\by_{R,\ngrande,\npiccola}\left( \BadPath_{\text{Chain}}(\alpha,\kappa,\eps) \right) 
    \le n ~ ((2\rgrande+\eps)^d-(2\rgrande)^d)|B(0,1)| ~ \nu^\by_{R,\ngrande-1,\npiccola}\left( \BadPath_{\text{Chain}}(\alpha,\kappa-1,\eps) \right).
\end{align*}
Let us now transfer this chain induction inequality to the mixed probability measure $\mu^\by_R$.
According to~\eqref{eq:muRy}, and since configurations with less than $\kappa+1$ spheres do not have chains of length $\kappa$:
\begin{align*}
& \mu^\by_R\left( \BadPath_{\text{Chain}}(\alpha,\kappa,\eps) \right)   \\
& = \frac{1}{\Zmixing^\by_R} 
    \sum_{\npiccola=0}^{+\infty} \frac{\zpiccola^\npiccola}{\npiccola!} \, 
    \sum_{\ngrande=\kappa+1}^{+\infty} \frac{\zgrande^\ngrande}{\ngrande!} \,                
    \nu^\by_{R,\ngrande,\npiccola} \left( \BadPath_{\text{Chain}}(\alpha,\kappa,\eps) \right) \\
& \le ((2\rgrande+\eps)^d-(2\rgrande)^d)|B(0,1)| \, \frac{1}{\Zmixing^\by_R}
      \sum_{\npiccola=0}^{+\infty} \frac{\zpiccola^\npiccola}{\npiccola!} \,
      \sum_{\ngrande=\kappa+1}^{+\infty} \frac{\zgrande^\ngrande}{\ngrande!} \, ~ 
      \ngrande \, \nu^\by_{R,\ngrande-1,\npiccola}\left( \BadPath_{\text{Chain}}(\alpha,\kappa-1,\eps) \right) \\
& \le ((2\rgrande+\eps)^d-(2\rgrande)^d)\volball_d \, 
      \frac{\zgrande}{\Zmixing^\by_R} \,
      \sum_{\npiccola=0}^{+\infty} \frac{\zpiccola^\npiccola}{\npiccola!} \,
      \sum_{\ngrande=\kappa}^{+\infty} \frac{\zgrande^\ngrande}{\ngrande!} \,                
      \nu^\by_{R,\ngrande,\npiccola}\left( \BadPath_{\text{Chain}}(\alpha,\kappa-1,\eps) \right) \\
& \le \zgrande \, ((2\rgrande+\eps)^d-(2\rgrande)^d)\volball_d \,
      \mu^\by_R\left( \BadPath_{\text{Chain}}(\alpha,\kappa-1,\eps) \right) .      
\end{align*}
A straightforward induction argument completes the proof of Lemma \ref{lm:LongChainEstimate}.
\end{proof}

\subsection{Proof of Lemma~\ref{prop:OmegaFast}: fast moving spheres or particles are improbable}\label{sec:proofFastMotion} %%%%%%%%%%%%%%%%%%%%%%%%%%%%%%%%

The oscillation estimates of the penalised processes stated in Lemma \ref{prop:OmegaFast} rely on time reversal techniques. 

Let us first write the reversible penalised process as a sum of forward and backward Brownian motions.
According to~\eqref{eq:SDEnSpheres}, the following processes are rescaled Brownian motions for $1\le i,j \le \ngrande$ and $1\le k \le \npiccola$:
%\small
\begin{equation*}
\begin{cases} \disp
 \WGrande_i(t) 
 = \Grande_i^{\by,R,\ngrande,\npiccola}(t) -\Grande_i^{\by,R,\ngrande,\npiccola}(0) 
   +\frac12\int_0^t \nabla\psiGyR \big(\Grande_i^{\by,R,\ngrande,\npiccola}(s)\big) \,ds \\ \disp\qquad\qquad
   -\sum_{j=1}^\ngrande \int_0^t \big(\Grande_i^{\by,R,\ngrande,\npiccola}-\Grande_j^{\by,R,\ngrande,\npiccola}\big)(s) \,dL_{ij}^{\by,R,\ngrande,\npiccola}(s) \\
    \disp\qquad\qquad
   -\sum_{k=1}^\npiccola \int_0^t \big(\Grande_i^{\by,R,\ngrande,\npiccola}-\Piccola_k^{\by,R,\ngrande,\npiccola}\big)(s) \,d\ell_{ik}^{\by,R,\ngrande,\npiccola}(s) \\ \disp 
 \sigmaP\, \WPiccola_k(t) 
 = \Piccola_k^{\by,R,\ngrande,\npiccola}(t) -\Piccola_k^{\by,R,\ngrande,\npiccola}(0) 
   +\frac{\sigmaP^2}{2} \int_0^t \nabla\psiPyR\big(\Piccola_k(s)\big) \,ds\\
    \disp\qquad\qquad
   -\sigmaP^2 \sum_{i=1}^\ngrande \int_0^t \big(\Piccola_k^{\by,R,\ngrande,\npiccola}-\Grande_i^{\by,R,\ngrande,\npiccola}\big)(s) \,d\ell_{ik}^{\by,R,\ngrande,\npiccola}(s).
\end{cases}
\end{equation*}
\normalsize
The forward process 
\begin{equation*}
    \big( \Grande_i^{\by,R,\ngrande,\npiccola}(t), \Piccola_k^{\by,R,\ngrande,\npiccola}(t),  L_{ij}^{\by,R,\ngrande,\npiccola}(t), \ell_{ik}^{\by,R,\ngrande,\npiccola}(t) \big)_{t\in [0,1], 1\le i,j \le \ngrande,\, 1\le k \le \npiccola} 
\end{equation*}
has the same distribution as the backward process
\begin{equation*}
   \big( \Grande_i^{\by,R,\ngrande,\npiccola}(1-t), \Piccola_k^{\by,R,\ngrande,\npiccola}(1-t),  L_{ij}^{\by,R,\ngrande,\npiccola}(1-t), \ell_{ik}^{\by,R,\ngrande,\npiccola}(1-t) \big)_{t\in [0,1], 1\le i,j \le \ngrande,\, 1\le k \le \npiccola}.
\end{equation*}
Hence, the following images of the backward process also are rescaled Brownian motions:
\begin{equation*}
\begin{cases} 
\disp
 \underleftarrow{\WGrande_i^{\by,R,\ngrande,\npiccola}}(t) 
 = \Grande_i^{\by,R,\ngrande,\npiccola}(1-t) -\Grande_i^{\by,R,\ngrande,\npiccola}(1) 
   \\
   \disp\qquad\qquad\qquad +\frac12\int_0^t \nabla\psiGyR \big(\Grande_i^{\by,R,\ngrande,\npiccola}(1-s)\big) \,ds \\ 
\disp\qquad\qquad\qquad
   -\sum_{j=1}^\ngrande \int_0^t \big(\Grande_i^{\by,R,\ngrande,\npiccola}-\Grande_j^{\by,R,\ngrande,\npiccola}\big)(1-s) \,dL_{ij}^{\by,R,\ngrande,\npiccola}(1-s) \\
\disp\qquad\qquad\qquad
   -\sum_{k=1}^\npiccola \int_0^t \big(\Grande_i^{\by,R,\ngrande,\npiccola}-\Piccola_k^{\by,R,\ngrande,\npiccola}\big)(1-s) \,d\ell_{ik}^{\by,R,\ngrande,\npiccola}(1-s) \\ \disp 
 \sigmaP\, \underleftarrow{\WPiccola_k^{\by,R,\ngrande,\npiccola}}(t) 
 = \Piccola_k^{\by,R,\ngrande,\npiccola}(1-t) -\Piccola_k^{\by,R,\ngrande,\npiccola}(1) \\
 \disp\qquad\qquad\qquad
   +\frac{\sigmaP^2}{2} \int_0^t \nabla\psiPyR\big(\Piccola_k(1-s)\big) \,ds\\
 \disp\qquad\qquad\qquad
   -\sigmaP^2 \sum_{i=1}^\ngrande \int_0^t \big(\Piccola_k^{\by,R,\ngrande,\npiccola}-\Grande_i^{\by,R,\ngrande,\npiccola}\big)(1-s) \,d\ell_{ik}^{\by,R,\ngrande,\npiccola}(1-s). 
\end{cases}
\end{equation*}
\normalsize
As in~\cite{FKRZ24}, the following relations hold for every $t\in[0,1]$\,:
\begin{equation}\label{eq:SumForwardBackward} 
\begin{split}
 \Grande_i^{\by,R,\ngrande,\npiccola}(t)
  = \Grande_i^{\by,R,\ngrande,\npiccola}(0) 
    +\frac12 \left( \WGrande_i(t) +\underleftarrow{\WGrande_i^{\by,R,\ngrande,\npiccola}}(1-t) -\underleftarrow{\WGrande_i^{\by,R,\ngrande,\npiccola}}(1) \right), \\ \disp 
 \Piccola_k^{\by,R,\ngrande,\npiccola}(t)
 = \Piccola_k^{\by,R,\ngrande,\npiccola}(0)
   +\frac{\sigmaP}{2} \left( \WPiccola_k(t) +\underleftarrow{\WPiccola_k^{\by,R,\ngrande,\npiccola}}(1-t) -\underleftarrow{\WPiccola_k^{\by,R,\ngrande,\npiccola}}(1) \right). 
\end{split}
\end{equation}
Let us now compute the probability that, 
among the $\ngrande$ spheres and $\npiccola$ particles in the reversible solution 
$\big( \Grande_i^{\by,R,\ngrande,\npiccola}, \Piccola_k^{\by,R,\ngrande,\npiccola} \big)_{1\le i,j \le \ngrande,\, 1\le k \le \npiccola}$ 
of the finite-dimensional SDE with initial condition distributed according to $\nu^\by_{R,\ngrande,\npiccola}$, 
at least one sphere or particle that enters the $\alpha$-neighbourhood of the origin oscillates too much (see Definition~\ref{def:defFast}):
\begin{equation*}
    Q^\by_{R,\ngrande,\npiccola}\left( \BadPath'_{\text{Fast}}(\alpha,\delta,\eps) \right) 
   =\int P\left( \bX^{\by,R,\ngrande,\npiccola}(\bx,\cdot) \in\BadPath'_{\text{Fast}}(\alpha,\delta,\eps) \right) \, \nu^\by_{R,\ngrande,\npiccola}(d\bx).  
\end{equation*}
The $\ngrande$ spheres (resp. $\npiccola$ particles) are exchangeable, thus
\begin{equation}\label{eq:exchangeability}
\begin{split}
    & Q^\by_{R,\ngrande,\npiccola}\left( \BadPath'_{\text{Fast}}(\alpha,\delta,\eps) \right)\\
    & \le \ngrande\, Q^\by_{R,\ngrande,\npiccola}\left( \Grande_1(\cdot)\in\BadPath_{\text{Fast}}(\alpha,\delta,\eps) \right)
    + \npiccola\, Q^\by_{R,\ngrande,\npiccola}\left( \Piccola_1(\cdot)\in\BadPath_{\text{Fast}}(\alpha,\delta,\eps) \right).
\end{split}
\end{equation}
We evaluate separately first the sphere term and then the particle term. 

According to Definition \ref{def:defFast}, for $\alpha_0=0$ and $\alpha_j=\alpha+\eps/\delta+j/\delta$ if $j\in\N^*$, the event $\Grande_1(\cdot)\in\BadPath_{\text{Fast}}(\alpha,\delta,\eps)$ is equivalent to:
\begin{align*}
    & \exists j,j'\in\N \text{ s.t. } \alpha_j \le |\Grande_1(0)| < \alpha_{j+1} \text{ and } 
                                             \alpha_{j'} \le |\Grande_1(1)| < \alpha_{j'+1} \text{ and } \\
    & \min_{0\le s \le1}|\Grande_1(s)| \leq \alpha \text{~~and } \sup_{\substack{|t-s|<\delta \\ 0\le s,t \le1}} |\Grande_1(t)-\Grande_1(s)| > \eps .
\end{align*}
For a path to go from the outside of the ball with radius $\alpha_j$ to the inside of the ball with radius $\alpha$ in a time duration of less than $1$, the distance covered during at least one of the $1/\delta$ time intervals of length $\delta$ has to be larger than 
$\delta(\alpha_j-\alpha) = \eps+j$. 
Hence $\Grande_1(\cdot)\in\BadPath_{\text{Fast}}(\alpha,\delta,\eps)$ implies:
\begin{align*}
    & \exists j,j'\in\N \text{ s.t. } \alpha_j \le |\Grande_1(0)| < \alpha_{j+1} \text{ and } 
                                      \alpha_{j'} \le |\Grande_1(1)| < \alpha_{j'+1} \text{ and } \\
    & \min_{0\le s \le1}|\Grande_1(s)| \leq \alpha \text{ and } 
      \sup_{\substack{|t-s|<\delta \\ 0\le s,t \le1}} |\Grande_1(t)-\Grande_1(s)|  
      > \eps+\max(j,j').
\end{align*}
Using expression~\eqref{eq:SumForwardBackward}, we get:
\begin{align*}
  & \sup_{\substack{|t-s|<\delta \\ 0\le s,t \le1}} |\Grande_1^{\by,R,\ngrande,  \npiccola}(t)-\Grande_1^{\by,R,\ngrande,\npiccola}(s)| > \eta \\
  & \implies  \sup_{\substack{|t-s|<\delta \\ 0\le s,t \le1}} 
              |\WGrande_1^{\by,R,\ngrande,\npiccola}(t)-\WGrande_1^{\by,R,\ngrande,\npiccola}(s)| > \eta  
            \ \  \text{ or }
\ \ \sup_{\substack{|t-s|<\delta \\ 0\le s,t \le1}} 
                          |\underleftarrow{\WGrande_1^{\by,R,\ngrande,\npiccola}}(t)-\underleftarrow{\WGrande_1^{\by,R,\ngrande,\npiccola}}(s)| 
                          > \eta .
\end{align*}
Thus, thanks to the reversibility of $Q^\by_{R,\ngrande,\npiccola}$, we have:
\begin{align*}
   & Q^\by_{R,\ngrande,\npiccola}\left( \Grande_1^{\by,R,\ngrande,\npiccola}(\cdot)\in\BadPath_{\text{Fast}}(\alpha,\delta,\eps) \right)   \\
   & \le 2 \sum_{j\in\N} 
           Q^\by_{R,\ngrande,\npiccola}\Bigg(  \alpha_j \le |\Grande_1^{\by,R,\ngrande,\npiccola}(0)| < \alpha_{j+1} \text{ and }
                              \sup_{\substack{|t-s|<\delta \\ 0\le s,t \le1}}            
                              |\WGrande_1^{\by,R,\ngrande,\npiccola}(t)-\WGrande_1^{\by,R,\ngrande,\npiccola}(s)| 
                              > \eps+j \Bigg) .                  
\end{align*}
We then use the following standard Brownian estimate:
\begin{lemma}\label{LemmeBrownianEstimate}
If $ W $ is a $d$-dimensional Brownian motion %on $(\Omega,\mathcal{F},P)$
then, for every $ \eps > 0 $ and every $ \delta \in ]0,1] $,
\begin{equation*}
    P\left( \sup_{\substack{ |t-s|<\delta,~ 0 \le s,t \le 1 } } ~ |W(t) - W(s)| ~ \ge \eps \right) 
    \quad \le~ 4\sqrt{5} ~ \frac{d}{\delta} ~ \exp\big(-\frac{\eps^2}{10\, d \,\delta}\big). 
\end{equation*}
\end{lemma}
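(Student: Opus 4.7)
The plan is to reduce this classical multidimensional oscillation estimate to a one-dimensional Gaussian tail, since nothing of substance is specific to the surrounding SDE context. Writing $W=(W^{(1)},\dots,W^{(d)})$ as independent real-valued standard Brownian motions, the event $\{|W(t)-W(s)|\ge\eps\}$ forces $|W^{(i)}(t)-W^{(i)}(s)|\ge \eps/\sqrt{d}$ for at least one coordinate $i$, so a union bound over $i$ produces an overall factor of $d$, reducing the task to controlling $\sup_{|t-s|<\delta,\, s,t\in[0,1]}|B(t)-B(s)|$ for a standard real Brownian motion $B$.

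Next I would discretise time by covering $[0,1]$ with the overlapping intervals $I_k=[k\delta,(k+2)\delta]$, $k=0,1,\ldots,N:=\lfloor 1/\delta\rfloor$. If $|t-s|<\delta$, both $s$ and $t$ lie in $I_k$ for $k=\lfloor \min(s,t)/\delta\rfloor$, so the triangle inequality yields
\begin{equation*}
  \sup_{|t-s|<\delta}|B(t)-B(s)|\ \le\ 2\,\max_{0\le k\le N}\sup_{u\in I_k}\big|B(u)-B(k\delta)\big|,
\end{equation*}
and, by stationarity of increments, each term on the right has the same law as $\sup_{v\in[0,2\delta]}|\tilde B(v)|$ for a standard real Brownian motion $\tilde B$.

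A union bound over the $d$ coordinates and the $N+1=O(1/\delta)$ intervals then reduces the lemma to an estimate on $P\big(\sup_{v\in[0,2\delta]}|\tilde B(v)|\ge \eps/(2\sqrt d)\big)$. For this I would apply the reflection principle, $P(\sup_{v\in[0,\tau]}|\tilde B(v)|\ge a)\le 4\,P(|\tilde B(\tau)|\ge a)$, followed by the Gaussian tail bound $P(|\mathcal{N}(0,\tau)|\ge a)\le 2\exp(-a^2/(2\tau))$, producing exactly the announced shape $(\mathrm{const})\cdot (d/\delta)\exp\big(-c\,\eps^2/(d\delta)\big)$. The only genuine obstacle is arithmetical: to recover the precise constants $4\sqrt 5$ and $1/10$ stated in the lemma, one treats the overlap size as a free parameter, replacing the intervals $I_k$ of length $2\delta$ by intervals of length $\kappa\delta$, and optimises $\kappa$ so as to trade the pre-factor $(\kappa+1)/\delta$ against the exponent $\eps^2/(2\kappa\,d\,\delta)$; the remainder is routine.
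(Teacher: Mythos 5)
The paper does not in fact prove this lemma: it is quoted as a ``standard Brownian estimate'' with no argument supplied, so there is nothing in the source to compare against. One is left to assess your proposal on its own terms.

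Your overall strategy (coordinate reduction, chaining over a time grid, reflection, Gaussian tail) is the right one, but the final accounting does not reach the stated constants, and the remark that what remains ``is routine'' is where the gap lies. With your discretisation $I_k=[k\delta,(k+\kappa)\delta]$ the covering property ``both $s,t\in I_k$ for some $k$'' forces $\kappa\ge 2$, because after the triangle inequality
\begin{equation*}
 |B(t)-B(s)|\le 2\sup_{u\in I_k}|B(u)-B(k\delta)|
\end{equation*}
you must catch all $t<s+\delta$ with $s\in[k\delta,(k+1)\delta)$. The one-coordinate threshold is $\eps/(2\sqrt d)$ (not $\eps/\sqrt d$) and the relevant variance is $\kappa\delta$, so the reflection-plus-Chernoff step gives an exponent $\eps^2/(8\kappa\, d\,\delta)$, not $\eps^2/(2\kappa\, d\,\delta)$ as you wrote. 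With $\kappa\ge 2$ the best you can get by this route is $\exp(-\eps^2/(16 d\,\delta))$, and no choice of prefactor will rescue that: the ratio
\begin{equation*}
  \frac{C\,e^{-\eps^2/(16d\delta)}}{4\sqrt5\,e^{-\eps^2/(10d\delta)}}
  = \tfrac{C}{4\sqrt5}\exp\!\Big(\tfrac{3\eps^2}{80\,d\,\delta}\Big)\;\longrightarrow\;\infty
\end{equation*}
as $\eps\to\infty$, so the bound with $16$ in the exponent is genuinely weaker than the lemma, not merely a matter of arithmetic.

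The strategy can be repaired, however, by changing where you split the increment. Use grid points $\tau_k=k\delta$, $0\le k\le \lfloor 1/\delta\rfloor+1$, and for $|t-s|<\delta$ write $B(t)-B(s)=(B(t)-B(\tau))-(B(s)-B(\tau))$ with $\tau$ the grid point lying in $[s,t]$ when there is one (i.e.\ when $s,t$ straddle a grid point) or $\tau=\tau_k$ for $s,t\in[\tau_k,\tau_{k+1}]$ otherwise; in both cases each piece is an increment of $B$ from $\tau$ over a time $<\delta$. Hence the relevant event is
\begin{equation*}
  \bigcup_k\Big\{\ \sup_{|u-\tau_k|\le\delta,\ u\in[0,1]}|B(u)-B(\tau_k)|\ \ge\ \tfrac{\eps}{2\sqrt d}\ \Big\},
\end{equation*}
and a two-sided reflection bound gives each term probability $\le 4\exp(-\eps^2/(8d\delta))$. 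With at most $2/\delta$ grid points and $d$ coordinates this yields
\begin{equation*}
  P\ \le\ \frac{8d}{\delta}\,\exp\!\Big(-\frac{\eps^2}{8\,d\,\delta}\Big)\ \le\ \frac{4\sqrt5\,d}{\delta}\,\exp\!\Big(-\frac{\eps^2}{10\,d\,\delta}\Big),
\end{equation*}
the last inequality because $8\le 4\sqrt5$ and $1/8\ge 1/10$. The decisive improvement is splitting at a grid point between $s$ and $t$ rather than at the left endpoint of a covering interval of length $2\delta$: this halves the effective variance from $2\delta$ to $\delta$ and is what brings the exponent below $1/10$.
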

The Brownian motion is independent of the starting point in equation~\eqref{eq:SDEnSpheres}, thus Lemma~\ref{LemmeBrownianEstimate} implies:
\begin{align*}
   & Q^\by_{R,\ngrande,\npiccola}\left( \Grande_1^{\by,R,\ngrande,\npiccola}(\cdot)\in\BadPath_{\text{Fast}}(\alpha,\delta,\eps) \right) \le 8\sqrt{5} \, \frac{d}{\delta} \,\sum_{j=0}^{+\infty} 
         \nu^\by_{R,\ngrande,\npiccola}\Bigg(  \alpha_j \le |\grande_1| < \alpha_{j+1} \Bigg)  
         \exp\left( -\frac{(\eps+j)^2}{10\, d \, \delta} \right).
\end{align*}
Then~\eqref{eq:nu_yRn} yields:
\begin{equation*}
    \nu^\by_{R,\ngrande,\npiccola}\Big( \alpha_j \le |\grande_1| < \alpha_{j+1} \Big)         
   \le \nu^\by_{R,\ngrande-1,\npiccola}(\D) ~ |B(0,\alpha_{j+1}) \setminus B(0,\alpha_j)|,
\end{equation*}
so that the above quantity is bounded by: 
\begin{equation*}
\begin{split}
   & \le 8\sqrt{5} ~\frac{d}{\delta} ~\exp(-\frac{\eps^2}{10\, d \, \delta}) \,
         \nu^\by_{R,\ngrande-1,\npiccola}(\D) ~
         \sum_{j=0}^{+\infty} e^{-\frac{2\eps j+j^2}{10\, d \,\delta}} ~ |B(0,\alpha_{j+1})| .
\end{split}
\end{equation*} 
The above sum is smaller than some constant $\Cgrande$ which only depends on the dimension $d$. Putting in $\Cgrande$ the factors depending only on the dimension, we obtain:
\begin{align*}
    &Q^\by_{R,\ngrande,\npiccola}\left( \Grande_1^{\by,R,\ngrande,\npiccola}(\cdot)\in\BadPath_{\text{Fast}}(\alpha,\delta,\eps) \right)
    \le \frac{\Cgrande}{\delta^{d+1}} ~ \exp(-\frac{\eps^2}{10\, d \,\delta}) 
        ~\nu^\by_{R,\ngrande-1,\npiccola}(\D) ~ \alpha^d. 
\end{align*}
We evaluate in the same way the particle term
$\disp Q^\by_{R,\ngrande,\npiccola}\left( \Piccola_1(\cdot)\in\BadPath_{\text{Fast}}(\alpha,\delta,\eps) \right)$.
The only difference is that the diffusion coefficient is equal to $\sigmaP$ instead of $1$.
We obtain again, for $\eps\le1$, $\delta\le1$, and $\alpha\ge1$, and with some constant $\Cpiccola$ which depends only on the dimension $d$:
\begin{align*}
    &Q^\by_{R,\ngrande,\npiccola}\left( \Piccola_1^{\by,R,\ngrande,\npiccola}(\cdot)\in\BadPath_{\text{Fast}}(\alpha,\delta,\eps) \right) \le \frac{\Cpiccola}{\delta^{d+1}} ~ \exp(-\frac{\eps^2}{10\, d \,\delta~\sigmaP^2}) 
        ~\nu^\by_{R,\ngrande,\npiccola-1}(\D) ~ \alpha^d. 
\end{align*}
Thanks to~\eqref{eq:exchangeability},
\begin{align*}
   &Q^\by_{R,\ngrande,\npiccola}\left( \BadPath'_{\text{Fast}}(\alpha,\delta,\eps) \right)\\
   & \le \frac{\alpha^d}{\delta^{d+1}} ~ 
         \exp(-\frac{\eps^2}{10\, d \, \delta~\max(1,\sigmaP^2)})
         \left( \ngrande~\Cgrande ~\nu^\by_{R,\ngrande-1,\npiccola}(\D) ~ 
                +\npiccola~\Cpiccola ~\nu^\by_{R,\ngrande,\npiccola-1}(\D) \right).
\end{align*}
We can now compute the probability of fast motion under the Poissonian mixture $Q^\by_R$:
\begin{align*}
   &Q^\by_R\left( \BadPath'_{\text{Fast}}(\alpha,\delta,\eps) \right)\\ 
   &\le \frac{\alpha^d}{\delta^{d+1}} ~ 
         \exp(-\frac{\eps^2}{10\, d \,\delta~\max(1,\sigmaP^2)})
         \frac{1}{\Zmixing^\by_R} \\
         &\quad \Bigg( \zgrande~\Cgrande
                 \underbrace{ \sum_{\ngrande=1}^{+\infty} \frac{\zgrande^{\ngrande-1}}{(\ngrande-1)!} \sum_{\npiccola=0}^{+\infty} \frac{\zpiccola^\npiccola}{\npiccola!}~\nu^\by_{R,\ngrande-1,\npiccola}(\D) }_{ =\Zmixing^\by_R } 
                 +\zpiccola~\Cpiccola \underbrace{ \sum_{\ngrande=0}^{+\infty} \frac{\zgrande^\ngrande}{\ngrande!} \sum_{\npiccola=1}^{+\infty} \frac{\zpiccola^{\npiccola-1}}{(\npiccola-1)!}~\nu^\by_{R,\ngrande,\npiccola-1}(\D) }_{ =\Zmixing^\by_R }
         \Bigg).
\end{align*}  
Choosing $C_{\text{Fast}} := \Cgrande~\zgrande + \Cpiccola~\zpiccola$
completes the proof of Lemma~\ref{prop:OmegaFast}.

\section*{Acknowledgments}
The authors would like to thank the editors and anonymous referee for their constructive comments, which
improved the quality of this paper.
MF acknowledges the support of the CDP C2EMPI, together
with the French State under the France-2030 programme, the University of Lille,
the Initiative of Excellence of the University of Lille, the European Metropolis of Lille for their funding and support of the R-CDP-24-004-C2EMPI project.

\bibliographystyle{alpha}
{\small\bibliography{References}}

\end{document}